\def\foo#1\endgraf\unskip#2\foo{\def\row@to@buffer{#1\endgraf\unskip\unskip#2}}
\numberwithin{equation}{section}
\newtheorem{TheoremABC}{Theorem}
\newtheorem{CorollaryABC}[TheoremABC]{Corollary}
\newtheorem{ConjectureABC}[TheoremABC]{Conjecture}
\newtheorem{Theorem}{Theorem}[subsection]
\newtheorem*{Theorem*}{Theorem}
\newtheorem{Corollary}[Theorem]{Corollary}
\newtheorem{Lemma}[Theorem]{Lemma}
\newtheorem{Proposition}[Theorem]{Proposition}
 { \theoremstyle{definition}
\newtheorem{Definition}[Theorem]{Definition}

\newtheorem{Remark}[Theorem]{Remark} }
\renewcommand{\k}{\mathbf{k}}
\newcommand{\id}{\operatorname{id}}
\newcommand{\we}{\wedge}
\newcommand{\rk}{\operatorname{rk}}
\newcommand{\WW}{{\mathcal W}}
\newcommand{\EE}{{\mathcal E}}
\newcommand{\VV}{{\mathcal V}}
\newcommand{\LL}{{\mathcal L}}
\newcommand{\Hom}{\operatorname{Hom}}
\newcommand{\Ext}{\operatorname{Ext}}
\newcommand{\End}{\operatorname{End}}
\renewcommand{\a}{\alpha}
\renewcommand{\b}{\beta}
\newcommand{\om}{\omega}
\newcommand{\la}{\lambda}
\renewcommand{\P}{{\mathbb P}}
\newcommand{\La}{\Lambda}
\newcommand{\wt}{\widetilde}
\newcommand{\ot}{\otimes}
\newcommand{\sub}{\subset}
\newcommand{\GL}{\operatorname{GL}}
\renewcommand{\k}{\mathbf{k}}
\renewcommand{\mod}{\operatorname{mod}}
\newcommand{\und}{\underline}
\newcommand{\OO}{{\mathcal O}}
\newcommand{\Si}{\Sigma}
\newcommand{\DD}{{\mathcal D}}
\newcommand{\hra}{\hookrightarrow}
\newcommand{\lan}{\langle}
\newcommand{\ran}{\rangle}
\newcommand{\Coh}{\operatorname{Coh}}
\newcommand{\CC}{{\mathcal C}}
\newcommand{\UU}{{\mathcal U}}
\renewcommand{\P}{{\mathbb P}}
\newcommand{\Pic}{\operatorname{Pic}}
\newcommand{\de}{\delta}
\renewcommand{\ker}{\operatorname{ker}}
\newcommand{\im}{\operatorname{im}}
\newcommand{\pa}{\partial}
\newcommand{\fg}{{\mathfrak g}}
\newcommand{\Gr}{G}
\begin{document}

\allowdisplaybreaks

\newcommand{\arXivNumber}{2310.18759}

\renewcommand{\PaperNumber}{037}

\FirstPageHeading

\ShortArticleName{Compatible Poisson Brackets Associated with Elliptic Curves in $G(2,5)$}

\ArticleName{Compatible Poisson Brackets Associated\\ with Elliptic Curves in $\boldsymbol{G(2,5)}$}

\Author{Nikita MARKARIAN~$^{\rm a}$ and Alexander POLISHCHUK~$^{\rm bc}$}

\AuthorNameForHeading{N.~Markarian and A.~Polishchuk}

\Address{$^{\rm a)}$~Universit\'e de Strasbourg, France}
\EmailD{\href{mailto:nikita.markarian@gmail.com}{nikita.markarian@gmail.com}}

\Address{$^{\rm b)}$~Department of Mathematics, University of Oregon, Eugene, OR~97403, USA}
\EmailD{\href{mailto:apolish@uoregon.edu}{apolish@uoregon.edu}}

\Address{$^{\rm c)}$~National Research University Higher School of Economics, Moscow, Russia}

\ArticleDates{Received December 05, 2023, in final form April 27, 2024; Published online May 07, 2024}

\Abstract{We prove that a pair of Feigin--Odesskii Poisson brackets on ${\mathbb P}^4$ associated with elliptic curves given as linear sections of the Grassmannian $G(2,5)$ are compatible if and only if this pair of elliptic curves is contained in a del Pezzo surface obtained as a linear section of $G(2,5)$.}

\Keywords{Poisson bracket; bi-Hamiltonian structure; elliptic curve; triple Massey products}

\Classification{14H52; 53D17}

\section{Introduction}

We work over an algebraically closed field $\k$ of characteristic $0$.

In this paper we continue to study compatible pairs among the Poisson brackets on projective spaces introduced by Feigin--Odesskii (see \cite{FO95,P98}).
Their construction associates with every stable vector bundle $\VV$ of degree $n>0$ and rank $k$ on an elliptic curve $E$, a Poisson bracket on the projective
space $\P H^0(E,\VV)^*$. We refer to such Poisson brackets as FO brackets of type $q_{n,k}$.

Two Poisson brackets are called {\it compatible} if the corresponding bivectors satisfy ${[\Pi_1,\Pi_2]=0}$ (equivalently, any linear combination of these brackets is again Poisson).
In \cite{OW}, Odesskii and Wolf discovered $9$-dimensional spaces of compatible FO brackets of type $q_{n,1}$ on $\P^{n-1}$
for each~${n\ge 3}$. Their construction was interpreted and extended in \cite{HP-bih}, where the authors showed that one gets compatible FO brackets
if the elliptic curves are anticanonical divisors on a surface~$S$ and the stable bundles on them are restrictions of a single
exceptional bundle on $S$ that forms an exceptional pair with $\OO_S$ (see \cite[Theorem~4.4]{HP-bih}).
One can ask whether any two compatible FO brackets of type $q_{n,k}$ on $\P^{n-1}$ appear in this way.
In \cite{MP} we have shown that this is the case for $k=1$ (for some specific rational surfaces containing normal elliptic curves in projective spaces).
In the present work, we consider the case of FO brackets of type $q_{5,2}$ on $\P^4$.
Note that the question of finding bi-Hamiltonian structures with brackets of type $q_{5,2}$ was raised by Rubtsov in~\cite{Rubtsov}.

Let $V$ be a $5$-dimensional vector space.
Consider the Pl\"ucker embedding
\begin{gather*}
G(2,V)\to \P\left({\bigwedge}^2V\right).
\end{gather*}
It is well known that for a generic $5$-dimensional subspace $W\sub {\bigwedge}^2 V$ the corresponding linear section
\begin{gather*}E_W:=G(2,V)\cap \P W\end{gather*}
is an elliptic curve. Furthermore, if $\UU\sub V\ot \OO$ is the universal subbundle on $G(2,V)$, then one can check that the restriction $V_W:=\UU^\vee|_{E_W}$ is a
stable bundle of rank $2$ and degree $5$ on $E_W$
(see Lemma \ref{Gr-res-map-lem} below).
Thus, we have the corresponding Feigin--Odesskii bracket of type $q_{5,2}$ on $\P H^0(E_W,V_W)^*$.

Furthermore, one can check that the restriction map
\begin{gather*}V^*=H^0(G(2,V),\UU^\vee)\to H^0(E_W,V_W)\end{gather*}
is an isomorphism (see Lemma \ref{Gr-res-map-lem}). Thus, we get a Poisson bracket $\Pi_W$ on $\P V$ (defined up to a rescaling).

On the other hand, we have a natural $\GL(V)$-invariant map
\begin{gather*}\pi_{5,2}\colon\ {\bigwedge}^5\left({\bigwedge}^2V\right)\to H^0\left(\P V,{\bigwedge}^2 T\right)\ot {\det}^2(V)\end{gather*}
constructed as follows.

Note that we have a natural isomorphism $V\simeq H^0(\P V, T(-1))$, hence we get a natural map~${V\ot \OO(1)\to T}$, and hence, the composed map
\begin{gather*}\phi\colon\ W\ot \OO(2)\to {\bigwedge}^2 V\ot \OO(2)\to  {\bigwedge}^2 T\end{gather*}
on $\P V$. Taking the 5th exterior power of this map, we get a map
\begin{gather*}{\bigwedge}^5(\phi)\colon\ \det(W)\ot \OO(10)\to {\bigwedge}^5\left({\bigwedge}^2 T\right)\simeq \left({\bigwedge}^2 T\right)^\vee \ot {\det}^3(T),\end{gather*}
where we used the identification $\det\big({\bigwedge}^2 T\big)\simeq \det^3(T)$.
Note that we have a nondegenerate pairing given by the exterior product,
\begin{gather*}{\bigwedge}^2 T\ot {\bigwedge}^2 T\to \det(T),\end{gather*}
hence, we have an isomorphism ${\bigwedge}^2 T\simeq \big({\bigwedge}^2 T\big)^\vee \ot \det(T)$, and we can rewrite the above map as
\begin{gather*}\det(W)\to {\bigwedge}^2 T\ot {\det}^2(T)(-10)\simeq {\bigwedge}^2 T\ot {\det}^2(V).\end{gather*}

\begin{TheoremABC}\label{TheoremA}
For every $5$-dimensional subspace $W\sub {\bigwedge}^2 V$, such that $E_W:=G(2,V)\cap \P W$ is an elliptic curve, one has
an equality
\[
\pi_{5,2}(\la_W)=\Pi_W\ot \de,
\]
for some
trivializations $\la_W\in {\bigwedge}^5 W$ and $\de\in \det^2(V)$.
\end{TheoremABC}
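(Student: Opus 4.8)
The plan is to verify the asserted identity pointwise over a dense open subset of $\P V$, at a general point $[\xi]$ with $\xi\in V$, and then propagate it globally: both sides are regular sections of $\bigwedge^2 T\ot\det^2(V)$, so once they are shown to span the same line in each general fibre, the resulting proportionality factor is a nowhere-zero regular function on the irreducible variety $\P V$, hence constant, and the constant is absorbed into the choice of trivializations $\la_W$ and $\de$. The first, purely linear-algebraic, step is to identify $\pi_{5,2}(\la_W)$ at $[\xi]$. Write $\ov V:=V/\langle\xi\rangle$ (of dimension $4$) and let $\ov W$ be the image of $W$ under $\bigwedge^2V\to\bigwedge^2\ov V$. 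The fibre at $[\xi]$ of $\phi$ is the composite $W\to\bigwedge^2V\to\bigwedge^2\ov V$, and taking $\bigwedge^5$ carries the generator $\la_W=w_1\wedge\dots\wedge w_5$ to the functional $\om\mapsto \phi(w_1)\wedge\dots\wedge\phi(w_5)\wedge\om$ on $\bigwedge^2\ov V$, whose kernel is exactly $\ov W$ (generically a hyperplane). Using the wedge pairing $\bigwedge^2\ov V\ot\bigwedge^2\ov V\to\bigwedge^4\ov V$ to identify $(\bigwedge^2\ov V)^\vee$ with $\bigwedge^2\ov V$, this functional becomes the generator of the line $\ov W^{\perp}\sub\bigwedge^2\ov V$. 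Thus, viewed in $\bigwedge^2 T_{[\xi]}$ (which is $\bigwedge^2\ov V$ up to a one-dimensional twist), $\pi_{5,2}(\la_W)$ at $[\xi]$ is the unique bivector $\Pi$, up to scale, with $\Pi\wedge\ov w=0$ in $\bigwedge^4\ov V$ for all $w\in W$.

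As a consistency check that guides the whole comparison, $\phi_{[\xi]}$ drops rank precisely when some nonzero $w\in W$ lies in $\langle\xi\rangle\wedge V$, i.e.\ when $w=\xi\wedge v$ is decomposable with $[\langle\xi,v\rangle]\in E_W$; hence the degeneracy locus of $\pi_{5,2}(\la_W)$ is the surface swept out in $\P V$ by the lines $\P U$, $[U]\in E_W$. This matches the known degeneracy of the Feigin--Odesskii bracket $\Pi_W$, which is what makes the identity plausible.

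For the Feigin--Odesskii side I would use the standard description of the bracket from \cite{P98,HP-bih,MP}. Under the identifications $\P V=\P H^0(E_W,V_W)^*$ and $H^0(E_W,V_W)=V^*$ (Lemma \ref{Gr-res-map-lem}), the cotangent space at $[\xi]$ is $\{s\in H^0(E_W,V_W):\langle\xi,s\rangle=0\}\ot\langle\xi\rangle$. By Serre duality (using $\om_{E_W}\simeq\OO_{E_W}$) the class $\xi\in H^0(E_W,V_W)^*\simeq\Ext^1(V_W,\OO_{E_W})$ defines an extension $0\to\OO_{E_W}\to\VV_\xi\to V_W\to 0$, and the condition $\langle\xi,s\rangle=0$ says exactly that $s$ lifts to a section of $\VV_\xi$. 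The bracket then pairs two such cotangent vectors $s,t$ by a triple Massey product $m_3$ on $D^b(E_W)$ built from $\xi$ and the chosen lifts (equivalently, by the $A_\infty$/CYBE formula of Polishchuk). The task is to evaluate this pairing in Grassmannian terms: since $V_W=\UU^\vee|_{E_W}$, sections are linear forms on $V$, and the relevant $\Ext$-multiplications should be representable, via the restriction to $E_W$ of the universal sequence $0\to\UU\to V\ot\OO\to\QQ\to 0$, by exterior multiplication on $\bigwedge^\bullet V$. The goal is to show $\langle\Pi_W,s\wedge t\rangle$ equals the pairing of $\ov s\wedge\ov t$ against the annihilator of $\ov W$, i.e.\ that $\Pi_W$ at $[\xi]$ is again the generator of $\ov W^{\perp}$; combined with the first step this proves the theorem.

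\emph{The main obstacle} is precisely this last evaluation: converting the triple Massey product on $E_W$ into the exterior multiplication $\bigwedge^2V\ot\bigwedge^2V\to\bigwedge^4V$ taken modulo $\langle\xi\rangle$. Concretely, one must select \v{C}ech or Dolbeault representatives for $\xi$ and for the classes controlling the lifts $\tilde s,\tilde t$ that are adapted to the embedding $E_W\sub G(2,V)$, and then check that the homotopy defining $m_3$ reproduces the Pl\"ucker pairing, with the degree and twist bookkeeping producing exactly the factor $\det^2(V)$. A secondary, routine point is to confirm the genericity used in the reduction---that $\phi_{[\xi]}$ is injective and both bivectors are nonzero off a proper closed subset---so that the pointwise identity indeed propagates to the global equality asserted.
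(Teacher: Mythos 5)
Your proposal has a genuine gap, and you have named it yourself: the entire content of the theorem sits in the step you defer as ``the main obstacle.'' Your first step (identifying $\pi_{5,2}(\la_W)$ at a general $[\xi]$ with the generator of $\ov W^\perp\sub{\bigwedge}^2(V/\langle\xi\rangle)$ under the wedge pairing) is correct but elementary, and the propagation argument at the end is fine (the proportionality factor is a rational function whose polar locus would lie in the codimension-two degeneracy loci, hence it is constant). But the claim that the triple Massey product computing $\Pi_W$ at $[\xi]$ reduces to exterior multiplication modulo $\langle\xi\rangle$, producing exactly the generator of $\ov W^\perp$, is precisely what must be proved, and choosing \v{C}ech or Dolbeault representatives adapted to $E_W\sub G(2,V)$ and verifying that the homotopy defining $m_3$ reproduces the Pl\"ucker pairing is not a routine verification --- it is a substantial $A_\infty$ computation that your proposal does not carry out. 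As written, you have verified the easy side of the identity and restated the hard side as a goal.

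It is worth seeing how the paper avoids this computation entirely, because the contrast is instructive. Rather than evaluating the Massey product at a point, the paper uses the Koszul-resolution formula (Proposition \ref{complete-int-prop}, applied to $P=\UU^\vee$ on $X=G(2,V)$, with the required cohomology vanishing from Kapranov) only to conclude that each term of the bracket is multilinear in the five defining equations of $E_W$, so that $W\mapsto\Pi_W$ is a regular morphism $f\colon\Gr\big(5,{\bigwedge}^2V\big)\to\P H^0\big(\P V,{\bigwedge}^2T\big)$; it then computes $f^*\OO(1)\simeq\OO(1)\ot\det(V)^2$ via a family of Gorenstein curves and \cite[Proposition~4.1]{HP-bih}, so that $f$ comes from a $\GL(V)$-equivariant linear map ${\bigwedge}^5\big({\bigwedge}^2V\big)\to H^0\big(\P V,{\bigwedge}^2T\big)\ot\det(V)^2$; finally, a representation-theoretic argument (the target is $\Si^{3,3,2,2}(V)$, which by the plethysm $e_5\circ e_2$ occurs with multiplicity one in ${\bigwedge}^5\big({\bigwedge}^2V\big)$) shows any such equivariant map is unique up to scalar, hence must agree with $\pi_{5,2}$. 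In other words, equivariance plus a Schur-multiplicity-one count replaces the explicit pointwise evaluation and also handles all the $\det^2(V)$ twist bookkeeping that you would otherwise have to track through the homotopies. If you want to complete your direct route, the missing computation is essentially that of \cite[Proposition~3.1]{NP} specialized to the Koszul/twisted-complex model, and you should expect it to be the bulk of the proof.
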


Theorem~\ref{TheoremA} is deduced from the existence of a formula for $\Pi_W$, depending linearly on the Pl\"ucker coordinates of $W$ (which follows from the results of \cite{HP-bih}),
combined with a representation-theoretic argument employing the fact that the construction of $\Pi_W$ is $\GL(V)$-equivariant.

\begin{TheoremABC}\label{TheoremB} \quad
\begin{enumerate}\itemsep=0pt
\item[$(i)$] For $5$-dimensional subspaces $W,W'\sub {\bigwedge}^2 V$ such that $E_W$ and $E_{W'}$ are elliptic curves, the Poisson brackets $\Pi_W$ and $\Pi_{W'}$
are compatible if and only if $\dim W\cap W'\ge 4$.

\item[$(ii)$] For any collection $(W_i)$  of  $5$-dimensional subspaces in ${\bigwedge}^2 V$,
the brackets $(\Pi_{W_i})$ are pairwise compatible if and only if either there exists a $6$-dimensional
subspace $U\sub {\bigwedge}^2 V$ such that each $W_i$ is contained in $U$, or
there exists a $4$-dimensional subspace $K\sub {\bigwedge}^2 V$ such that each $W_i$ contains $K$.
\end{enumerate}
\end{TheoremABC}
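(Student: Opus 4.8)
The plan is to reduce both parts of Theorem~\ref{TheoremB} to a single $\GL(V)$-equivariant bilinear form and then to isolate the one genuinely hard point. By Theorem~\ref{TheoremA}, for every $W$ with $E_W$ elliptic the bivector $\Pi_W$ on $\P V$ equals $\pi_{5,2}(\la_W)\ot\de^{-1}$ up to a nonzero scalar, where $\la_W$ is a generator of the Pl\"ucker line $\bigwedge^5 W\sub\bigwedge^5\big(\bigwedge^2 V\big)$. Since by definition compatibility of $\Pi_W,\Pi_{W'}$ means $[\Pi_W,\Pi_{W'}]=0$, I would introduce the $\GL(V)$-equivariant symmetric bilinear form
\[
B(\xi,\eta):=[\pi_{5,2}(\xi),\pi_{5,2}(\eta)]
\]
on $\bigwedge^5\big(\bigwedge^2 V\big)$, valued in $H^0\big(\P V,\bigwedge^3 T\big)\ot{\det}^4(V)$, so that $\Pi_W$ and $\Pi_{W'}$ are compatible if and only if $B(\la_W,\la_{W'})=0$. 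Writing $Q(\xi)=B(\xi,\xi)$ for the associated quadratic form, the Poisson property of $\Pi_W$ is exactly $Q(\la_W)=0$.

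For the \emph{if} direction of $(i)$, observe first that $Q(\la_W)=0$ for all $W$ with $E_W$ elliptic; since such $W$ are dense in $G\big(5,\bigwedge^2 V\big)$ and $Q$ is (vector-valued) quadratic, $Q$ vanishes on the whole affine cone over the Grassmannian, i.e.\ $Q(\la)=0$ for every decomposable $\la$. Now if $\dim(W\cap W')\ge4$, choose a generator $\kappa$ of $\bigwedge^4(W\cap W')$ and vectors $a,b$ with $W=(W\cap W')\oplus\k a$ and $W'=(W\cap W')\oplus\k b$; then $\la_W+t\la_{W'}=\kappa\we(a+tb)$ is decomposable for every $t$ (this is the line in the Grassmannian joining $[\la_W]$ and $[\la_{W'}]$). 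Hence $Q(\la_W+t\la_{W'})\equiv0$, and extracting the coefficient of $t$ gives $B(\la_W,\la_{W'})=0$, i.e.\ compatibility.

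The \emph{only if} direction is the crux. Using $Q(\la_W)=Q(\la_{W'})=0$, compatibility is equivalent to $Q(\la_W+\la_{W'})=0$, so one must show this fails whenever $\dim(W\cap W')\le3$ (with both curves elliptic). My plan has two layers. Conceptually, I would try to prove that on the relevant locus $Q(\xi)=0$ forces $\xi$ to be decomposable; since a sum $\la_W+\la_{W'}$ of two decomposable $5$-vectors is decomposable exactly when $\dim(W\cap W')\ge4$, this would finish $(i)$ and simultaneously exhibit the common del Pezzo surface $S=G(2,V)\cap\P(W+W')$ on which both $E_W$ and $E_{W'}$ are anticanonical. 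Concretely, I would make $B$ explicit from the formula for $\Pi_W$ that is linear in the Pl\"ucker coordinates of $W$ (the input to Theorem~\ref{TheoremA}, coming from \cite{HP-bih}), and then determine the zero locus of $B(\la_W,-)$ on $G\big(5,\bigwedge^2 V\big)$ by an equivariance-and-dimension argument. I expect the main obstacle to be precisely here: $\GL(V)$ is \emph{not} transitive on pairs with $\dim(W\cap W')=3$, so one cannot reduce to a single representative, and the real difficulty is to upgrade ``generic incompatibility'' on this stratum to incompatibility for \emph{all} elliptic pairs with $\dim(W\cap W')\le3$. This is the analogue for $q_{5,2}$ of the converse proved for $q_{n,1}$ in \cite{MP}, and I anticipate it will need the explicit bracket together with a careful study of the quadrics cutting out $\{Q=0\}$.

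Granting $(i)$, part $(ii)$ is pure linear algebra: a family $(W_i)$ is pairwise compatible iff $\dim(W_i\cap W_j)\ge4$ for all $i,j$, and I must show such a family either lies in a common $6$-dimensional $U$ or contains a common $4$-dimensional $K$ (the two rulings of maximal linear subspaces of $G\big(5,\bigwedge^2 V\big)$). Fix $W_1\ne W_2$ and set $K=W_1\cap W_2$ ($\dim K=4$), $U=W_1+W_2$ ($\dim U=6$). For any further $W_i$ one has $W_i\cap U\supseteq W_i\cap W_1$, so $\dim(W_i\cap U)\ge4$: if it equals $5$ then $W_i\sub U$, while if it equals $4$ then $W_i\cap W_1=W_i\cap W_2=W_i\cap U$, and this common $4$-space lies in $W_1\cap W_2=K$, hence equals $K$, giving $K\sub W_i$. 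Thus every member satisfies $W_i\sub U$ or $K\sub W_i$. Finally a genuinely mixed pair, with $W\sub U$, $K\not\sub W$ and $K\sub W'$, $W'\not\sub U$, is impossible: then $W'\cap U=K$, so $W\cap W'=W\cap K$ has dimension $\le3$, contradicting compatibility. Hence either all members lie in $U$ or all contain $K$, which is the assertion.
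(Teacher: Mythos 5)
Your ``if'' direction of (i) and your part (ii) are correct, and the ``if'' argument is genuinely different from the paper's. The paper proves compatibility for $\dim(W\cap W')\ge 4$ by reducing (via irreducibility of the space of such pairs) to the case where $S=\P(W+W')\cap G(2,V)$ is a smooth del Pezzo surface and then invoking \cite[Theorem~4.4]{HP-bih} for the exceptional pair $\big(\OO_S,\UU^\vee|_S\big)$. You instead get it formally from Theorem~\ref{TheoremA}: the quadratic form $Q(\xi)=[\pi_{5,2}(\xi),\pi_{5,2}(\xi)]$ vanishes on the cone over the dense open locus of $W$ with $E_W$ elliptic, hence on the whole cone over $\Gr\big(5,{\bigwedge}^2V\big)$, and polarizing along the line $t\mapsto \kappa\we(a+tb)$, which stays inside the cone exactly because $\dim(W\cap W')\ge 4$, kills $B(\la_W,\la_{W'})$. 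This is a clean alternative that bypasses the del Pezzo geometry entirely. Your part (ii) is the same reduction as the paper's; the paper cites the linear-algebra dichotomy as well known, while you prove it, correctly.

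The genuine gap is the ``only if'' direction of (i), which is the hard content of the theorem, and you do not prove it --- you state a plan and, to your credit, flag its obstruction yourself. Your ``conceptual layer'' (that on the locus of sums $\la_W+\la_{W'}$ the vanishing of $Q$ forces decomposability) is not an argument but a restatement of the theorem, since a sum of two non-proportional decomposable $5$-vectors is decomposable precisely when $\dim(W\cap W')\ge 4$; nothing is gained by the reformulation. Your ``concrete layer'' (compute $B$ explicitly and determine the zero locus of $B(\la_W,-)$ by equivariance and dimension counting) runs into exactly the difficulty you name: $\GL(V)$ is not transitive on the strata with $\dim(W\cap W')\le 3$, and determining the full kernel of $\xi\mapsto[\pi_{5,2}(\xi),\Pi_W]$ is precisely what the authors leave open as Conjecture~\ref{ConjectureD}, so there is no reason to expect this route to close. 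What the paper actually does is geometric and requires machinery absent from your proposal: the rank stratification of $\Pi_W$ (Lemma~\ref{Pi-rank-lem}), the conormal Lie algebra at points of the vanishing lines $\P\La_p$ (Theorem~\ref{conormal-thm}), which yields the normal form of Lemma~\ref{conormal-Lie-lemma} and the constraint \eqref{comp-bracket-form} on any bracket compatible with $\Pi_W$, the two-dimensional distribution $D_{E'}$ on $G(2,V)$ with its characterization and subbundle property along $\P\La_p$ (Lemmas~\ref{Dp-lem} and~\ref{Dp-subbund-lem}), and the exclusion $E\not\sub\Si_{E'}$ (Lemma~\ref{comp-Si-lem}, resting on Proposition~\ref{Si-E-prop} and Corollary~\ref{lines-cor}). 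These combine to show $T_pE\sub D_{E',p}$ for generic $p\in E$, hence the projection $E\setminus E'\to\P\big({\bigwedge}^2V/W'\big)$ has vanishing derivative and is constant, hence $E\sub \P U\cap G(2,V)$ for a $6$-dimensional $U$ containing $W'$, i.e., $\dim(W\cap W')\ge 4$. Without this (or a worked-out substitute), your proposal establishes only one implication of (i), and part (ii) --- whose forward implication quotes (i) in full --- inherits the same gap.
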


The idea of proof is to analyze the vanishing $[\Pi_{W_1},\Pi_{W_2}]=0$ near a sufficiently generic point where $\Pi_{W_1}$ vanishes.
An important ingredient of the proof is a $2$-dimensional distribution on~$G(2,V)$ associated with $W\sub {\bigwedge}^2 V$:
it corresponds to the rational map from $G(2,V)$ to $\P^4$ obtained as the composition of the Plucker embedding with the linear projection to $\P({\bigwedge}^2 V/W)$
(see Section~\ref{Td-distr-sec}). The analysis of the vanishing of the Schouten bracket is used to prove that the elliptic curve $E_{W_1}$ is everywhere tangent to the distribution
associated with $W_2$, which implies the result.

\begin{CorollaryABC}\label{CorollaryC}
The maximal dimension of a linear subspace of Poisson brackets on $\P(V)$, where $\dim V=5$, spanned by some FO brackets $\Pi_W$ of type $q_{5,2}$,
is $6$.
\end{CorollaryABC}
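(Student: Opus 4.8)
The plan is to combine the pairwise classification of Theorem~\ref{TheoremB}(ii) with the linear description of $\Pi_W$ provided by Theorem~\ref{TheoremA}. First I would reduce the statement to pairwise compatibility. A linear subspace of bivector fields spanned by FO brackets $\Pi_{W_1},\dots,\Pi_{W_m}$ consists entirely of Poisson bivectors if and only if these brackets are pairwise compatible: since the Schouten bracket is symmetric on bivectors and $[\Pi_{W_i},\Pi_{W_i}]=0$, the expansion $\big[\sum_i c_i\Pi_{W_i},\sum_j c_j\Pi_{W_j}\big]=\sum_{i\ne j}c_ic_j[\Pi_{W_i},\Pi_{W_j}]$ vanishes for all $(c_i)$ exactly when $[\Pi_{W_i},\Pi_{W_j}]=0$ for $i\ne j$. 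Thus the quantity to maximize is $\dim\linspan\{\Pi_{W_i}\}$ over pairwise compatible collections, and by Theorem~\ref{TheoremB}(ii) every such collection falls into one of two cases.

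Next I would bound the dimension in each case using Theorem~\ref{TheoremA}, which identifies $\Pi_W$, up to scaling, with $\pi_{5,2}(\lambda_W)$ for a generator $\lambda_W\in\bigwedge^5 W\sub\bigwedge^5\big(\bigwedge^2 V\big)$. In the first case all $W_i$ lie in a fixed $6$-dimensional $U$, so each $\lambda_{W_i}$ lies in $\bigwedge^5 U$, a space of dimension $\binom{6}{5}=6$; since $\pi_{5,2}$ is linear, $\linspan\{\Pi_{W_i}\}\sub\pi_{5,2}\big(\bigwedge^5 U\big)\otimes\delta^{-1}$ has dimension at most $6$. In the second case all $W_i$ contain a fixed $4$-dimensional $K$, so writing $W_i=K\oplus\k v_i$ we get $\lambda_{W_i}=\omega_K\we v_i$ with $\omega_K$ a generator of $\bigwedge^4 K$; these all lie in $\omega_K\we\big(\bigwedge^2 V\big)$, whose dimension equals $\dim\big(\bigwedge^2 V\big)-\dim K=10-4=6$, giving the same bound. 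Hence the dimension is at most $6$ in all cases.

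Finally, to show the bound $6$ is attained I would work in the first case, taking $U$ generic so that $S:=G(2,V)\cap\P U$ is a quintic del Pezzo surface whose anticanonical linear system $|-K_S|\simeq\P(U^*)$ consists of the elliptic curves $E_W$ for hyperplanes $W\sub U$. The framework of \cite{HP-bih} guarantees that the associated brackets $\Pi_W$, coming from the single exceptional bundle $\UU^\vee|_S$, are pairwise compatible, and as $W$ ranges over hyperplanes the elements $\lambda_W$ sweep out a dense subset of $\bigwedge^5 U$ (every nonzero element is decomposable, hence of the form $\lambda_W$) and therefore span it. Consequently $\linspan\{\Pi_W\}=\pi_{5,2}\big(\bigwedge^5 U\big)\otimes\delta^{-1}$, and it remains to check that $\pi_{5,2}|_{\bigwedge^5 U}$ is injective.

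This injectivity is where I expect the main difficulty. The cleanest route is to evaluate at a single point $p\in\P V$: the resulting map $\bigwedge^5 U\to\bigwedge^2 T_p\P V$ lands in a space of dimension $\binom{4}{2}=6$, so it suffices to exhibit one pair $(U,p)$ for which this evaluation is an isomorphism, an open condition that can be verified by a direct computation in Pl\"ucker coordinates (or deduced from the $\GL(V)$-equivariance of $\pi_{5,2}$ by computing its image as a representation). Establishing this non-degeneracy is the crux; once it is in place, the $6$-dimensional span is realized, and together with the upper bound the corollary follows.
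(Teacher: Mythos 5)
Your reduction to pairwise compatibility and your upper bound are essentially identical to the paper's argument: in both cases of Theorem~\ref{TheoremB}\,(ii) the tensors $\lambda_{W_i}$ lie in a fixed $6$-dimensional subspace of ${\bigwedge}^5\big({\bigwedge}^2V\big)$, namely ${\bigwedge}^5 U$ or $\big({\bigwedge}^4 K\big)\we\big({\bigwedge}^2V\big)$, so Theorem~\ref{TheoremA} and linearity of $\pi_{5,2}$ bound the span of the brackets by $6$. That half is fine.

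The gap is in the lower bound, exactly where you flag it: you never prove that $\pi_{5,2}|_{{\bigwedge}^5 U}$ is injective, and neither of your suggested routes closes this. The representation-theoretic one does not make sense as stated, because ${\bigwedge}^5 U$ is not a $\GL(V)$-subrepresentation; knowing that the image of $\pi_{5,2}$ is the irreducible representation $\Sigma^{3,3,2,2}(V)$ says nothing about whether $\ker(\pi_{5,2})$ meets ${\bigwedge}^5 U$. The pointwise-evaluation route is also subtler than ``an open condition to be checked in coordinates'': the locus of points where the evaluation ${\bigwedge}^5U\to{\bigwedge}^2 T_q\P V$ degenerates is the union of the zero loci $Z\big(\pi_{5,2}(\lambda_W)\big)$ over the $5$-dimensional family of hyperplanes $W\sub U$. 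Since every element of ${\bigwedge}^5U$ is decomposable, non-injectivity means precisely that $\pi_{5,2}(\lambda_{W_0})=0$ for some hyperplane $W_0\sub U$; when $E_{W_0}$ is smooth this is impossible by Theorem~\ref{TheoremA}, since FO brackets are nonzero (Lemma~\ref{Pi-rank-lem}), so the entire difficulty sits at the hyperplanes $W_0$ whose section $\P W_0\cap S$ is a \emph{singular} anticanonical divisor, where Theorem~\ref{TheoremA} gives you nothing and your control of the zero loci (they lie in the $3$-fold swept out by the lines of $S$ only in the smooth case) breaks down. This is precisely the case the paper's injectivity argument is designed to handle, and it is short: if an anticanonical divisor $E_0$ had zero bivector, then for a generic (hence smooth) anticanonical $E$ every member of the pencil $E+tE_0$ would map to the same bivector $\Pi_E$; but a smooth curve $E$ is recoverable from $\Pi_E$ as the family of lines contained in its zero locus $S_E$ (Lemma~\ref{S-lem}, Section~\ref{Rank-strat-sec}), so distinct members of the pencil have distinct brackets, a contradiction. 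Substituting that argument for your unproved nondegeneracy claim would complete your proof; as written, the crux of the attainment of the bound $6$ is missing.
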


Theorems \ref{TheoremA} and~\ref{TheoremB} suggest the following

\begin{ConjectureABC}\label{ConjectureD} Let $W\sub {\bigwedge}^2 V$ be a $5$-dimensional subspace such that $E_W$ is an elliptic curve.
Consider the subspace
\begin{gather*}T_W:=\left({\bigwedge}^4 W\right)\we \left({\bigwedge}^2V\right)\sub {\bigwedge}^5\left({\bigwedge}^2 V\right)\end{gather*}
$($the quotient of the latter subspace by ${\bigwedge}^5 W$ is exactly the image of the tangent
space to the Grassmannian $G\big(5,{\bigwedge}^2V\big)$ under Pl\"ucker embedding$)$.
Then the subspace of $\xi\in {\bigwedge}^5\big({\bigwedge}^2V\big)$ satisfying~${[\pi_{5,2}(\xi),\Pi_W]=0}$ coincides with
$T_W+\ker(\pi_{5,2})$.
\end{ConjectureABC}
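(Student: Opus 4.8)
The plan is to recast the problem in terms of a single $\GL(V)$-equivariant symmetric pairing and then to attack the non-trivial inclusion by the local method underlying Theorem~\ref{TheoremB}. Write $\ker L_W:=\{\xi\in {\bigwedge}^5({\bigwedge}^2 V): [\pi_{5,2}(\xi),\Pi_W]=0\}$ for the subspace in question. Clearly $\ker\pi_{5,2}\sub\ker L_W$. Moreover, by Theorem~\ref{TheoremA} the condition depends only on $\pi_{5,2}(\xi)$, so everything is governed by the $\GL(V)$-equivariant symmetric map $B(\xi,\xi')=[\pi_{5,2}(\xi),\pi_{5,2}(\xi')]$ with values in $H^0(\P V,{\bigwedge}^3 T)\ot{\det}^4 V$, and $\xi\in\ker L_W$ iff $B(\xi,\la_W)=0$. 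The first step I would carry out is to record the $\GL(V)$-decomposition ${\bigwedge}^5({\bigwedge}^2 V)=S^{(4,2,2,1,1)}V\oplus S^{(3,3,2,2)}V$ (the a priori third constituent $S^{(5,1,1,1,1,1)}V$ vanishes since $\dim V=5$), both summands of dimension $126$, together with the identification of the target $H^0(\P V,{\bigwedge}^2 T)\ot\det^2 V\simeq S^{(3,3,2,2)}V$. By Schur's lemma $\pi_{5,2}$ is then the projection onto the second summand, so $\ker\pi_{5,2}=S^{(4,2,2,1,1)}V$ and $\pi_{5,2}$ restricts to an isomorphism $S^{(3,3,2,2)}V\xrightarrow{\sim}\im\pi_{5,2}$. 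This reduces the conjecture to a statement about the induced form $\ov B$ on $\im\pi_{5,2}$.

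\textbf{The easy inclusion.} Since each $\Pi_{W'}$ is a genuine Poisson bivector, $B(\la_{W'},\la_{W'})$ is a scalar multiple of $[\Pi_{W'},\Pi_{W'}]=0$ for every $W'$ with $E_{W'}$ elliptic. As such $\la_{W'}$ are Zariski dense in the affine cone $\widehat G$ over $G\big(5,{\bigwedge}^2 V\big)$, the quadratic map $B$ vanishes identically on $\widehat G$. Differentiating $B(\la_{W'},\la_{W'})\equiv 0$ along $\widehat G$ at $W'=W$ gives $B(\la_W,T_W)=0$, i.e.\ $T_W\sub\ker L_W$; combined with $\ker\pi_{5,2}\sub\ker L_W$ this yields $T_W+\ker\pi_{5,2}\sub\ker L_W$. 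This is the infinitesimal shadow of Theorem~\ref{TheoremB}: one checks that $T_W$ is exactly the linear span of the Pl\"ucker points $\la_{W'}$ with $\dim(W\cap W')\ge4$ (these have the form $\om_U\we x$ with $\om_U\in{\bigwedge}^4 W$ decomposable and $x\in{\bigwedge}^2 V$, and such products span $({\bigwedge}^4 W)\we({\bigwedge}^2 V)=T_W$), so the membership of those points in $\ker L_W$ is precisely the compatibility criterion of Theorem~\ref{TheoremB}$(i)$.

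\textbf{The hard inclusion and the main obstacle.} It remains to prove $\ker L_W\sub T_W+\ker\pi_{5,2}$; after passing to $\im\pi_{5,2}$ this is the assertion $\ker\ov L_W=\pi_{5,2}(T_W)$, where $\ov L_W(\eta)=[\eta,\Pi_W]$, and since $\supseteq$ is already known it amounts to the dimension bound $\dim\ker\ov L_W\le\dim\pi_{5,2}(T_W)$. Because $B$ vanishes on $\widehat G$, every component of $B$ is a Pl\"ucker quadric, so all functionals $\xi\mapsto B(\la_W,\xi)$ lie in the conormal space of $\widehat G$ at the smooth point $\la_W$; the content of the conjecture is that these \emph{particular} quadrics---the ones produced by the Schouten bracket---already span all conormal directions transverse to $\ker\pi_{5,2}$, with no deficiency. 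I would attack this rigidity by the local analysis of Theorem~\ref{TheoremB}: take $\eta\in\im\pi_{5,2}$ with $[\eta,\Pi_W]=0$, expand this identity near a sufficiently generic point where $\Pi_W$ degenerates, and use the $2$-dimensional distribution on $G(2,V)$ attached to $W$; the pointwise vanishing conditions should force $\eta$ to be tangent, all along $E_W$, to the family of linear sections, hence to lie in $\pi_{5,2}(T_W)$.

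\textbf{Where the difficulty lies.} The crux---and the step I expect to be the main obstacle---is exactly this last implication: ruling out \emph{unexpected} compatible directions, i.e.\ showing that every bivector in $S^{(3,3,2,2)}V$ that Poisson-commutes with $\Pi_W$ arises as the derivative of a deformation of $W$ inside $G\big(5,{\bigwedge}^2 V\big)$. Equivalently, one must show $\rk\ov L_W=\codim_{\im\pi_{5,2}}\pi_{5,2}(T_W)$. Since the whole statement is $\GL(V)$-equivariant, one is tempted to test this rank at a single highly symmetric section $W$ (for instance a Heisenberg-invariant one), where the Heisenberg symmetry block-diagonalises $\ov L_W$ and reduces the claim to a finite linear-algebra verification; the delicate point is that the $j$-invariant of $E_W$ is a $\GL(V)$-invariant, so elliptic sections do not form a single orbit and a one-point rank count need not propagate. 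For this reason I expect the uniform local-geometric argument, rather than a representation-theoretic rank computation at one point, to be the decisive ingredient, with the rank identity serving as the numerical consistency check.
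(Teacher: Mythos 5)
The statement you set out to prove is Conjecture~\ref{ConjectureD}, which the paper leaves \emph{open}: the paper's only contribution to it is the remark that $T_W+\ker(\pi_{5,2})$ is contained in the commutant, proved by observing that $T_W$ is spanned by the vectors ${\bigwedge}^5 W'$ with $\dim(W\cap W')\ge 4$ and $E_{W'}$ elliptic, and invoking Theorems~\ref{TheoremA} and~\ref{TheoremB}. Your treatment of this ``easy'' inclusion is correct and is essentially the paper's, with two genuine refinements: the polarization argument (the quadratic map $B$ vanishes on the affine cone over $\Gr\big(5,{\bigwedge}^2V\big)$ because the FO brackets are Poisson on a dense subset, hence $B(\la_W,T_W)=0$ by differentiating along the cone) is cleaner than checking that the spanning decomposable vectors can be chosen with $E_{W'}$ elliptic; and the identification $\ker(\pi_{5,2})=\Si^{4,2,2,1,1}(V)$, with $\pi_{5,2}$ restricting to an isomorphism on the $126$-dimensional summand $\Si^{3,3,2,2}(V)$, is a correct sharpening that follows from the paper's plethysm computation plus Schur's lemma but is not stated there.

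The gap is the reverse inclusion, which you openly present as a strategy rather than a proof; since that is precisely the open content of the conjecture, your proposal in the end establishes no more than the paper does. Moreover, the strategy as sketched meets a concrete obstruction: the local analysis in the proof of Theorem~\ref{TheoremB}\,(i) is not symmetric in the two brackets. At a generic point $v$ of a vanishing line $\P\La_p$ of $\Pi_W$, only Lemma~\ref{conormal-Lie-lemma}\,(iii) applies to an arbitrary bivector $\eta$ with $[\eta,\Pi_W]=0$ (its proof uses nothing beyond this vanishing); every subsequent step of the paper's argument --- the planes $\P W_{L_2}$ and the fibration of Corollary~\ref{lines-cor}, the surface $S_{E'}$, the locus $\Si_{E'}$, the distribution $D_{E'}$ and Lemmas~\ref{Dp-lem}, \ref{Dp-subbund-lem}, \ref{comp-Si-lem} --- is geometry attached to the \emph{second} bracket being itself of the form $\Pi_{W'}$, and none of it has an analogue for an abstract $\eta\in\Si^{3,3,2,2}(V)$ that merely commutes with $\Pi_W$. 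So ``expand near a degeneration point of $\Pi_W$ and use the distribution attached to $W$'' does not reduce to existing machinery: ruling out unexpected compatible directions requires a genuinely new idea. Your closing remarks correctly diagnose why a one-point representation-theoretic rank count also fails (the $j$-invariant prevents elliptic sections from forming a single $\GL(V)$-orbit), but identifying the obstacles is not the same as overcoming them; as a proof of the conjecture the proposal is incomplete.
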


Note that we know the inclusion one way: the subspace $T_W$ is spanned by ${\bigwedge}^5(W')$
such that $\dim(W'\cap W)\!\ge \!4$ and $E_{W'}$ is an elliptic curve, and by Theorems~\ref{TheoremA} and~\ref{TheoremB}, ${\big[\pi_{5,2}\big({\bigwedge}^5\!(W')\big),\Pi_W\big]\!=\!0}$.

\section{Generalities}

\subsection[Feigin--Odesskii Poisson brackets of type q\_\{n,k\}]{Feigin--Odesskii Poisson brackets of type $\boldsymbol{q_{n,k}}$}

Let $E$ be an elliptic curve, with a fixed trivialization $\eta\colon\OO_E\to \om_E$, $\VV$ a stable bundle on $E$ of rank~$k$ and degree $n>0$.
We consider the corresponding Feigin--Odesskii Poisson bracket ${\Pi=\Pi_{E,\VV}}$ of type $q_{n,k}$ on the projective space $\P H^1\big(E,\VV^\vee\big)$ defined as in \cite{P98}.

We will need the following definition of $\Pi$ in terms of triple Massey products. For nonzero $\phi\in H^1\big(E,\VV^\vee\big)$, we denote by $\lan \phi\ran$ the corresponding line,
and we use the identification of the cotangent space to $\lan \phi\ran$ with $\lan \phi\ran^\perp\sub H^0(E,\VV)$ \big(where we use the Serre duality $H^0(E,\VV)\simeq H^1\big(E,\VV^\vee\big)^*$\big).

\begin{Lemma}[{\cite[Lemma 2.1]{HP-bih}}]\label{FO-gen-formula-lem}
For $s_1,s_2\in \lan\phi\ran^\perp$ one has
\begin{gather*}\Pi_{\phi}(s_1\we s_2)=\lan\phi,MP(s_1,\phi,s_2)\ran,\end{gather*}
where $MP$ denotes the triple Massey product for the arrows
\begin{gather*}\OO\rTo{s_2} \VV\rTo{\phi} \OO[1]\rTo{s_1} \VV[1].\end{gather*}
\end{Lemma}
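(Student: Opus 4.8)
The plan is to pass to an explicit cochain model for $D^b(E)$ (the Dolbeault dg-category, with $\Ext^\bullet$ computed by $\dbar$-cohomology), compute the triple Massey product there, and match the outcome with Polishchuk's cochain-level definition of $\Pi$ from \cite{P98}. The first observation is that the right-hand side is really a composition: since $MP(s_1,\phi,s_2)\in\Hom(\OO,\VV)=H^0(E,\VV)$, the pairing $\lan\phi,MP(s_1,\phi,s_2)\ran$ is the Yoneda composite $\phi\circ MP(s_1,\phi,s_2)\in\Hom(\OO,\OO[1])=H^1(E,\OO)\simeq\k$, the last identification using the trivialization $\eta$. Thus the whole formula is a statement about the four-term complex $\OO\to\VV\to\OO[1]\to\VV[1]\to\OO[2]$ with successive maps $s_2,\phi,s_1,\phi$, and the goal is to show that $\Pi_\phi(s_1\we s_2)$ is computed by exactly this secondary operation.

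First I would make the Massey product explicit. Represent $s_1,s_2$ by holomorphic bundle maps $\OO\to\VV$ and $\phi$ by a $\dbar$-closed form $\wt\phi\in A^{0,1}(\VV^\vee)$. The hypothesis $s_2\in\lan\phi\ran^\perp$ says precisely that $\wt\phi\cdot s_2\in A^{0,1}(\OO)$ is $\dbar$-exact, so there is $u\in A^{0,0}(\OO)$ with $\dbar u=\wt\phi\cdot s_2$; simplicity of $\VV$ likewise gives $s_1\cdot\wt\phi=\dbar v$ for some $v\in A^{0,0}(\End\VV)$. Then $MP(s_1,\phi,s_2)=s_1u-v\,s_2\in H^0(E,\VV)$, and $\lan\phi,MP(s_1,\phi,s_2)\ran=\int_E \eta\we\big(\wt\phi\cdot(s_1u-v\,s_2)\big)$, where $\wt\phi\cdot(-)$ is the contraction $\VV^\vee\ot\VV\to\OO$. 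The ambiguities in $u$ and $v$ are $\dbar$-closed elements of $A^{0,0}(\OO)=\k$ and $A^{0,0}(\End\VV)=\k$, so they change $MP$ only by $\k\,s_1+\k\,s_2\sub H^0(E,\VV)$; since $s_1,s_2\in\lan\phi\ran^\perp$, this indeterminacy is annihilated by $\phi$, and the right-hand side is well-defined. This is the step that uses the hypothesis $s_i\in\lan\phi\ran^\perp$ in an essential way.

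Next I would unwind Polishchuk's definition of $\Pi$ to the same cochain level. The point $\lan\phi\ran$ determines the extension $0\to\OO\to\WW_\phi\to\VV\to 0$ classified by $\phi\in\Ext^1(\VV,\OO)$, and the bivector is assembled from the boundary/Kodaira--Spencer data of this extension together with the Serre pairing; evaluated on the cotangent vectors $s_1,s_2$ this produces a secondary term governed by the same vanishing conditions $\wt\phi\cdot s_2=\dbar u$ and $s_1\cdot\wt\phi=\dbar v$. I would then identify the homotopies entering that definition with $u$ and $v$ above and check that the resulting integrand agrees, up to $\dbar$-exact terms, with $\eta\we\big(\wt\phi\cdot(s_1u-v\,s_2)\big)$, so that the two integrals over $E$ coincide. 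The skew-symmetry in $s_1\leftrightarrow s_2$ demanded by $\Pi$ follows from the standard symmetry of triple Massey products under Serre duality, equivalently from integration by parts exchanging the roles of $u$ and $v$.

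The main obstacle is exactly this last matching: Polishchuk's bracket is packaged through the geometry of the extension $\WW_\phi$ and the symplectic structure on the moduli of stable bundles, so the work lies in rewriting that definition as an explicit $\dbar$-homotopy formula and verifying that its homotopies can be taken equal to those defining the Massey product, while tracking all signs and the normalization by $\eta$. Once the two cochain formulas are aligned, the equality $\Pi_\phi(s_1\we s_2)=\lan\phi,MP(s_1,\phi,s_2)\ran$ is immediate.
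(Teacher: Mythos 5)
Your computation of the right-hand side is sound: representing $\phi$ by a $\dbar$-closed $\wt\phi\in A^{0,1}\big(\VV^\vee\big)$, the homotopies $u$, $v$ with $\dbar u=\wt\phi\cdot s_2$, $\dbar v=s_1\cdot\wt\phi$ exist exactly because $s_1,s_2\in\lan\phi\ran^\perp$ (for $v$ one uses that $\VV$ is simple, so $H^1(E,\End\VV)\simeq\k$ is detected by the trace), and the indeterminacy $\k s_1+\k s_2$ of $s_1u-vs_2$ is annihilated by $\phi$. But this only unpacks the Massey product. The statement to be proved is an equality with $\Pi_\phi$, and on that side your argument never becomes concrete: you do not write down the definition of $\Pi$ from \cite{P98} (your gloss, ``the symplectic structure on the moduli of stable bundles,'' is not that definition), and the entire comparison is compressed into the closing announcement that ``the work lies in rewriting that definition as an explicit $\dbar$-homotopy formula and verifying that its homotopies can be taken equal to those defining the Massey product.'' That verification is precisely the content of the lemma, so what you have is a correct reformulation of one side plus a plan, not a proof. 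Note also that the paper itself offers no proof to compare against: the lemma is imported from \cite[Lemma~2.1]{HP-bih}, which is where the identification is actually carried out.

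If you want to close the gap, the efficient route is Yoneda rather than Dolbeault. Let $0\to\OO\xrightarrow{i}\wt{\VV}\xrightarrow{p}\VV\to0$ be the extension with class $\phi$. The condition $\lan\phi,s_2\ran=0$ says $\phi\circ s_2=0$ in $\Ext^1(\OO,\OO)$, i.e., $s_2$ lifts to $\tilde s_2\colon\OO\to\wt{\VV}$ with $p\circ\tilde s_2=s_2$; the condition $\lan\phi,s_1\ran=0$ says (again via simplicity and the trace) that $s_1\circ\phi=0$ in $\Ext^1(\VV,\VV)$, i.e., $s_1$ extends to $\sigma_1\colon\wt{\VV}\to\VV$ with $\sigma_1\circ i=s_1$. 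The triple Massey product is then represented by $\sigma_1\circ\tilde s_2\in\Hom(\OO,\VV)$, with the same indeterminacy as above, so $\lan\phi,MP(s_1,\phi,s_2)\ran=\phi\circ\sigma_1\circ\tilde s_2\in\Ext^1(\OO,\OO)\simeq\k$. Since the bracket of \cite{P98} is defined through this same extension and these same lifts, the desired equality reduces to a short diagram chase, with no integrals, signs, or $\dbar$-homotopies to track; this is how one would reconstruct the proof of \cite[Lemma~2.1]{HP-bih}, and it would also replace your appeal to ``standard symmetry of Massey products under Serre duality'' by a direct check.
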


\subsection{Formula for a family of complete intersections}\label{Koszul-sec}

Let $X$ be a smooth projective variety of dimension $n$, $C\sub X$ a connected curve given as the zero locus of a regular section $F$ of a vector bundle $N$ of rank $n-1$,
such that $\det(N)^{-1}\simeq \om_X$. Then the normal bundle to $C$ is isomorphic to $N|_C$, so by the adjunction formula, $\om_C$ is trivial.
Thus, if $C$ is smooth, it is an elliptic curve.
Assume that $P$ is a vector bundle on $X$, such that the following cohomology vanishing holds:
\begin{equation}\label{N-coh-van-eq}
H^i\left(X,{\bigwedge}^iN^\vee\ot P\right)=H^{i-1}\left(X,{\bigwedge}^iN^\vee\ot P\right)=0 \qquad \text{for}\quad 1\le i\le n-1.
\end{equation}

We have the following Koszul resolution for $\OO_C$:
\begin{gather*}0\to {\bigwedge}^{n-1}N^\vee\to \cdots\to {\bigwedge}^2N^\vee\rTo{\de_2(F)} N^\vee \rTo{\de_1(F)} \OO_X\to \OO_C\to 0,\end{gather*}
which induces a map $e_C\colon \OO_C\to {\bigwedge}^{n-1}N^\vee[n-1]$ in the derived category of $X$.
Here the differential~$\de_i(F)$ is given by the contraction with $F\in H^0(X,N)$, so it depends linearly on $F$.

\begin{Lemma}\label{Gr-res-map-lem}\quad
\begin{itemize}\itemsep=0pt
\item[$(i)$] The natural restriction map $H^0(X,P)\to H^0(C,P|_C)$ and the map
\begin{gather*}\Ext^1(P,\OO_C)\rTo{e_C} \Ext^n\left(P,{\bigwedge}^{n-1}N^\vee\right)\simeq \Ext^n(P,\om_X)\end{gather*}
are isomorphisms. These maps are dual via the Serre duality isomorphisms
\begin{gather*}\Ext^1(P|_C,\OO_C)\simeq H^0(C,P|_C)^*, \qquad \Ext^n(P,\om_X)\simeq H^0(X,P)^*.\end{gather*}
\item[$(ii)$] Assume in addition that $\End(P)=\k$ and we have the following vanishing:
\begin{equation}\label{F-N-coh-van-eq}
\Ext^i\left(P,{\bigwedge}^iN^\vee\ot P\right)=\Ext^{i-1}\left(P,{\bigwedge}^iN^\vee\ot P\right)=0 \qquad \text{for}\quad 1\le i\le n-1.
\end{equation}
Then the bundle $P|_C$ is stable.
\end{itemize}
\end{Lemma}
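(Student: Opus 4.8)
The plan is to base everything on the Koszul resolution of $\OO_C$ and a pair of hypercohomology spectral sequences, and then to reduce part $(ii)$ to part $(i)$.

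\textbf{Restriction map in $(i)$.} Since $P$ is locally free, tensoring the Koszul resolution by $P$ produces a complex $K^\bullet\ot P$ with $(K^\bullet\ot P)^{-i}={\bigwedge}^iN^\vee\ot P$ placed in degree $-i$ ($0\le i\le n-1$), quasi-isomorphic to $P|_C$. I would run the hypercohomology spectral sequence
\[
E_1^{-i,q}=H^q\big(X,{\bigwedge}^iN^\vee\ot P\big)\ \Rightarrow\ H^{q-i}(C,P|_C).
\]
In total degree $0$ the only possible contributions are the diagonal terms $E_1^{-i,i}=H^i\big(X,{\bigwedge}^iN^\vee\ot P\big)$, which vanish for $1\le i\le n-1$ by \eqref{N-coh-van-eq}, leaving only $E_1^{0,0}=H^0(X,P)$. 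The differentials hitting $E^{0,0}$ originate from $E_1^{-r,r-1}=H^{r-1}\big(X,{\bigwedge}^rN^\vee\ot P\big)$, which again vanish by \eqref{N-coh-van-eq}. Hence $E_\infty^{0,0}=H^0(X,P)$, and the edge map, which is exactly the restriction map, is an isomorphism.

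\textbf{The $e_C$ map and the duality in $(i)$.} For the second map I would use the dual spectral sequence computing $\Ext^\bullet(P,\OO_C)$ from the same resolution,
\[
{}'E_1^{-i,q}=\Ext^q\big(P,{\bigwedge}^iN^\vee\big)=H^q\big(X,P^\vee\ot{\bigwedge}^iN^\vee\big)\ \Rightarrow\ \Ext^{q-i}(P,\OO_C).
\]
Serre duality on $X$, together with the isomorphism ${\bigwedge}^iN\ot\det(N)^{-1}\simeq{\bigwedge}^{n-1-i}N^\vee$ coming from $\det(N)^{-1}\simeq\om_X$, rewrites ${}'E_1^{-i,q}$ as the dual of $H^{n-q}\big(X,{\bigwedge}^{n-1-i}N^\vee\ot P\big)$; after the substitution $j=n-1-i$ this is term-wise Serre-dual to the first spectral sequence. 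Reading \eqref{N-coh-van-eq} through this duality, in total degree $1$ only the term with $i=n-1$, namely $\Ext^n\big(P,{\bigwedge}^{n-1}N^\vee\big)=\Ext^n(P,\om_X)$, survives, and all relevant differentials vanish. The corresponding edge map is precisely composition with the Koszul class $e_C\colon\OO_C\to{\bigwedge}^{n-1}N^\vee[n-1]$, so this map is an isomorphism; the term-wise duality of the two spectral sequences then yields the asserted compatibility with Serre duality on $C$ and on $X$.

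\textbf{Part $(ii)$.} It is a standard fact that on a smooth elliptic curve a vector bundle is stable if and only if it is simple, so it suffices to prove $\End_C(P|_C)=\k$. Since $\End_C(P|_C)=H^0\big(C,(P^\vee\ot P)|_C\big)$, I would apply part $(i)$ with $P$ replaced by $P^\vee\ot P$. For this bundle the hypothesis \eqref{N-coh-van-eq} becomes $H^q\big(X,{\bigwedge}^iN^\vee\ot P^\vee\ot P\big)=\Ext^q\big(P,{\bigwedge}^iN^\vee\ot P\big)=0$ for $q\in\{i-1,i\}$ and $1\le i\le n-1$, which is exactly \eqref{F-N-coh-van-eq}. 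Hence the restriction map $H^0\big(X,P^\vee\ot P\big)\to H^0\big(C,(P^\vee\ot P)|_C\big)$ is an isomorphism, and since $\End(P)=\k$ its source is one-dimensional. Therefore $\End_C(P|_C)=\k$, so $P|_C$ is simple and thus stable.

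The main obstacle is the middle step: identifying the edge map of the second spectral sequence with composition by $e_C$ and verifying that the two spectral sequences are genuinely Serre-dual, i.e.\ that the Koszul fundamental class $e_C$ intertwines Serre duality on $C$ with Serre duality on $X$ (Grothendieck--Serre duality for the lci $C\sub X$). Once this compatibility is in hand, both isomorphisms in $(i)$ and their duality follow uniformly, and $(ii)$ is a formal consequence of $(i)$.
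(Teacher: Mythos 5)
Your proposal is correct and takes essentially the same route as the paper: both parts come from the Koszul resolution of $\OO_C$ tensored with $P$ (respectively with $P^\vee\ot P$), the associated hypercohomology spectral sequences, and the two vanishing hypotheses, with the duality statement left at the same level of detail as in the paper. Your part $(ii)$, which applies $(i)$ to $P^\vee\ot P$, is the identical computation to the paper's evaluation of $\Hom(P|_C,P|_C)=\Hom(P,P|_C)$ via the resolution of $P|_C$, followed by the same simple-implies-stable fact for bundles on an elliptic curve.
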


\begin{proof}
(i) This is obtained from the Koszul resolution of $\OO_C$. For example, the space $H^0(P\ot \OO_C)$ is computed by tensoring this resolution
with $P$ and using the spectral sequence
\[H^i\left({\bigwedge}^j N^\vee\right) \Rightarrow H^{i-j}(P\ot \OO_C)\]
 and the assumption \eqref{N-coh-van-eq}.

(ii) Computing $\Hom(P|_C,P|_C)=\Hom(P,P|_C)$ using the Koszul resolution of $P|_C=P\ot \OO_C$,
we get that it is $1$-dimensional. Hence, $P|_C$ is stable.
\end{proof}

Now we can rewrite the formula of Lemma \ref{FO-gen-formula-lem} for the FO-bracket $\Pi_{C,P|_C}$ on
\[\P H^1\big(C,P^\vee|_C\big)\simeq \P \Ext^n(P,\om_X)\]
in terms of higher products on $X$ (obtained by the homological perturbation from a dg-en\-hance\-ment of $D^b(\Coh(X))$).

\begin{Proposition}\label{complete-int-prop}
For nonzero $\phi\in \Ext^n(P,\om_X)\simeq \Ext^1_C(P|_C,\OO_C)$, and $s_1,s_2\in \lan\phi\ran^\perp\sub H^0(X,P)$, one has
\begin{gather*}\Pi_{C,P|_C,\phi}(s_1\we s_2)=\pm\!\left\langle \!\!\phi, \!\sum_{i=1}^n (-1)^i m_{n+2}(\de_1(F),\ldots,\de_{i-1}(F),s_1,\de_i(F),\ldots,\de_{n-1}(F),\phi,s_2)\!\right\rangle.\end{gather*}
\end{Proposition}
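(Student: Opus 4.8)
The plan is to start from Lemma~\ref{FO-gen-formula-lem}, which reduces the value $\Pi_{C,P|_C,\phi}(s_1\we s_2)$ to the triple Massey product $MP(s_1,\phi,s_2)$ computed on the elliptic curve $C$, paired with $\phi$ via the trace isomorphism $H^1(\OO_C)\simeq\k$. The idea is then to transport this computation to the ambient variety $X$ by resolving $\OO_C$ through the Koszul complex, so that the three-fold Massey product on $C$ unfolds into a single higher $A_\infty$-product on $X$. Concretely, I would fix a dg-enhancement $\cA$ of $D^b(\Coh(X))$ (for instance the \v{C}ech or Dolbeault model) and pass to its minimal model, whose structure maps are the higher products $m_k$ appearing in the statement; the pushforward along $C\hra X$ lets us evaluate the $C$-side Massey product inside this $X$-side structure.

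The core of the argument is the observation that the class $e_C\colon\OO_C\to\bigwedge^{n-1}N^\vee[n-1]$ is precisely the iterated Yoneda product of the $n-1$ Koszul differentials $\de_1(F),\dots,\de_{n-1}(F)$, each of which is a degree-one morphism depending linearly on $F$. Under the identification $\Ext^1_C(P|_C,\OO_C)\simeq\Ext^n(P,\om_X)$ of Lemma~\ref{Gr-res-map-lem}, the class $\phi$ and the two sections $s_1,s_2\in H^0(X,P)$ become morphisms on $X$, and the trace pairing $\lan\phi,MP(s_1,\phi,s_2)\ran$ on $C$ is identified with the cyclic Calabi--Yau pairing on $X$. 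Computing $MP(s_1,\phi,s_2)$ inside the $X$-model forces us to traverse the full length of the Koszul resolution: each step invokes the homotopy inverting the Koszul differential and thereby inserts one of the maps $\de_j(F)$ as an additional input. Since the resolution has length $n-1$, the three inputs $s_1,\phi,s_2$ together with the $n-1$ differentials assemble into an $(n+2)$-fold product $m_{n+2}$, which accounts for the arity in the statement.

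Finally, I would pin down the precise shape of the formula by tracking where the section $s_1$ is inserted among the differentials. In the minimal-model (tree) expansion of the transferred product, $s_1$ may be attached at any of the $n$ slots created by the chain $\de_1(F),\dots,\de_{n-1}(F)$, and each placement contributes a single term $m_{n+2}(\de_1(F),\dots,s_1,\dots,\de_{n-1}(F),\phi,s_2)$; the Koszul sign rule governing the homotopy transfer produces the alternating factor $(-1)^i$, while the global ambiguity $\pm$ absorbs the normalization of the Serre duality and trace isomorphisms together with the shift conventions. The main obstacle is exactly this sign and combinatorial bookkeeping: one must verify that the homotopy-transfer formula, specialized to the Koszul resolution, yields precisely these $n$ terms with the stated signs and no others. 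This requires a careful, convention-consistent analysis of how the minimal $A_\infty$-structure on $C$ is induced from that on $X$, rather than any further geometric input.
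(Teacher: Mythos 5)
Your proposal follows essentially the same route as the paper: reduce to the Massey product via Lemma~\ref{FO-gen-formula-lem}, transport the computation to $X$ via the Koszul resolutions of $\OO_C$ and $P|_C$ (using Lemma~\ref{Gr-res-map-lem} to identify $\phi$, $s_1$, $s_2$ and the pairing with data on $X$), and unfold the triple product into an $m_{n+2}$ with the Koszul differentials $\de_j(F)$ inserted in all possible positions relative to $s_1$. The only differences are cosmetic: the paper packages the unfolding step as the explicit formula for $m_3$ in the category of twisted complexes (citing Keller) rather than as homotopy-transfer tree bookkeeping, and your ``cyclic Calabi--Yau pairing'' should simply be Serre duality on $X$ (which is Fano, not Calabi--Yau, in the intended application).
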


\begin{proof} The computation is completely analogous to that of \cite[Proposition 3.1]{NP}, so we will only sketch it.
First, one shows that our Massey product can be computed as the triple product $m_3$ for the arrows
\begin{gather*}\OO_X\to P\rTo{[1]}\OO_C\to P|_C\end{gather*}
given by $s_2$, $\phi$ and $s_1$.
Then we use resolutions ${\bigwedge}^\bullet N^\vee\to \OO_C$ and ${\bigwedge}^\bullet N^\vee\ot P\to P|_C$.
Thus, we have to calculate the following triple product in the category of twisted complexes:
\[
\begin{diagram}
\OO_X\\
\dTo{s_2}\\
P\\
\dTo{\phi}\\
{\bigwedge}^{n-1}N^\vee [n-1]&\rTo{\de_{n-1}(F)}&\cdots&\rTo{\de_2(F)}&N^\vee[1]&\rTo{\de_1(F)}& \OO_X\\
\dTo{s_1}&&&&\dTo{s_1}&&\dTo{s_1}\\
{\bigwedge}^{n-1}N^\vee\ot P [n-1]&\rTo{\de_{n-1}(F)}&\cdots&\rTo{\de_2(F)}&N^\vee\ot P[1]&\rTo{\de_1(F)}& P,\\
\end{diagram}
\]
where we view $\phi$ as a morphism of degree $1$ from $P$ to the twisted complex $\bigoplus {\bigwedge}^iN^\vee [i]$.
Now, the result follows from the formula for $m_3$ on twisted complexes (see \cite[Section 7.6]{Keller}).
\end{proof}

\subsection{Conormal Lie algebra}

Let $\VV$ be a stable bundle of positive degree on an elliptic curve $E$, with a fixed trivialization of $\om_E$, and consider the corresponding FO bracket $\Pi$ on
the projective space $X=\P H^0(\VV)^*=\P \Ext^1(\VV,\OO)$. Recall that for every point $x$ of a smooth Poisson variety $(X,\Pi)$ there is a natural Lie algebra structure on
\begin{gather*}\fg_x:=(\im \Pi_x)^\perp\sub T_x^*X,\end{gather*}
where we consider $\Pi_x$ as a map $T^*_xX\to T_xX$.
We call $\fg_x$ the {\it conormal Lie algebra}. In the case when $\Pi$ vanishes on $x$, we have $\fg_x=T_x^*$.

Let us consider a nontrivial extension
\begin{gather*}0\to \OO\rTo{i}\wt{\VV} \rTo{p} \VV\to 0\end{gather*}
with the class $\phi\in \Ext^1(\VV,\OO)$. By Serre duality, we have the corresponding hyperplane ${\lan\phi\ran^\perp\sub H^0(\VV)}$, and we have an identification
$\lan\phi\ran^\perp\simeq T_\phi^*\P H^0(\VV)^*$.

Consider a natural map
\begin{equation}\label{conormal-map}
\End\big(\wt{\VV}\big)/\lan\id\ran\to \lan\phi\ran^\perp\simeq T_\phi^*\P H^0(\VV)^*\colon\ A\mapsto p\circ A\circ i.
\end{equation}

The following result was proved in \cite{Gor}.

\begin{Theorem}\label{conormal-thm}
The above map induces an isomorphism of Lie algebras from $\End\big(\wt{\VV}\big)/\lan \id\ran$ to the conormal Lie algebra of $\Pi$ at the point $\phi$.
\end{Theorem}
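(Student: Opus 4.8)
The plan is to show that $\mu\colon A\mapsto p\circ A\circ i$ is a linear isomorphism onto $\fg_\phi=(\im\Pi_\phi)^\perp$ and then that it carries the commutator bracket to the conormal bracket; throughout I use the sequence $0\to\OO\xrightarrow{i}\wt\VV\xrightarrow{p}\VV\to0$ and its exact triangle, in which $\phi\circ p=0$. \emph{Well-definedness and injectivity.} For any $A$ the pairing $\lan\phi,p A i\ran$ is the Yoneda composition $\phi\circ p\circ A\circ i=0$, so $\mu(A)\in\lan\phi\ran^\perp\simeq T^*_\phi$; and $\mu(\id)=p\circ i=0$, so $\mu$ descends to $\End(\wt\VV)/\lan\id\ran$. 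Stability of $\VV$ gives $\Hom(\VV,\wt\VV)=0$: applying $\Hom(\VV,-)$ to the sequence, $\Hom(\VV,\OO)=H^0(\VV^\vee)=0$, and the connecting map $\Hom(\VV,\VV)=\k\cdot\id_\VV\to\Ext^1(\VV,\OO)$ sends $\id_\VV$ to $\phi\neq0$, hence is injective. If now $pAi=0$, then $Ai$ factors through $\ker p=\im i$, so $Ai=i\circ c$ with $c\in\Hom(\OO,\OO)=\k$; subtracting $c\,\id$ we may assume $Ai=0$, whence $A$ factors through $p$ as $A=B\circ p$ with $B\in\Hom(\VV,\wt\VV)=0$. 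Thus $A\in\lan\id\ran$ and $\mu$ is injective.

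\emph{The image is $\fg_\phi$.} Fix $s\in\lan\phi\ran^\perp$. Since $\lan\phi,s\ran=0$, the obstruction to lifting $s\colon\OO\to\VV$ along $p$ vanishes, so choose $\wt s\colon\OO\to\wt\VV$ with $p\wt s=s$; then $s\in\im\mu$ precisely when $\wt s$ extends to $A\in\End(\wt\VV)$ with $Ai=\wt s$, i.e.\ when the obstruction $\wt s\circ\phi\in\Ext^1(\VV,\wt\VV)$ vanishes. Its image under $p_*$ is $s\circ\phi\in\Ext^1(\VV,\VV)$, a $1$-dimensional space on which the trace to $H^1(\OO)\simeq\k$ is an isomorphism sending $s\circ\phi$ to $\lan\phi,s\ran=0$; hence $s\circ\phi=0$ and $\wt s\circ\phi=i_*\chi$ for some $\chi\in\Ext^1(\VV,\OO)\simeq H^0(\VV)^*$. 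Because $\ker\big(i_*\colon\Ext^1(\VV,\OO)\to\Ext^1(\VV,\wt\VV)\big)=\lan\phi\ran$, the class $[\chi]\in\Ext^1(\VV,\OO)/\lan\phi\ran$ is well defined, and $s\in\im\mu$ iff $[\chi]=0$, i.e.\ iff $\chi$ annihilates $\lan\phi\ran^\perp$. The crux of this step is the identity, proved by unwinding the Yoneda/Massey product,
\[\lan\chi,s'\ran=\pm\lan\phi,MP(s',\phi,s)\ran=\pm\Pi_\phi(s'\we s)\qquad(s'\in\lan\phi\ran^\perp),\]
the last equality being Lemma~\ref{FO-gen-formula-lem}. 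It shows $[\chi]=0$ iff $\Pi_\phi(s'\we s)=0$ for all $s'$, i.e.\ iff $s\in\ker\Pi_\phi=(\im\Pi_\phi)^\perp=\fg_\phi$. Combined with injectivity, $\mu$ is a linear isomorphism onto $\fg_\phi$.

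\emph{Compatibility of brackets (the main obstacle).} Since $Ai$ and $Bi$ are lifts of $\mu(A)$ and $\mu(B)$, one has $\mu([A,B])=p\,A\,(Bi)-p\,B\,(Ai)$. The conormal bracket of $\xi,\eta\in\fg_\phi$ is the linearization $[\xi,\eta]=d\{f,g\}|_\phi$ for functions with $df|_\phi=\xi$ and $dg|_\phi=\eta$, so I would compute this derivative by differentiating the Massey-product formula of Lemma~\ref{FO-gen-formula-lem} along the symplectic directions at $\phi$ and matching the result with $p\,A\,(Bi)-p\,B\,(Ai)$. The difficulty is that the first-order variation of $\Pi$ is governed by a secondary, $m_3$-type $A_\infty$-product on $\wt\VV$, so the identification requires careful bookkeeping of these higher products and their signs, together with a verification that the associativity constraints relating $m_2$ and $m_3$ reproduce exactly the commutator on $\End(\wt\VV)$. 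This final matching of the linearized Poisson bracket with the commutator is where the real work lies; the preceding steps are essentially formal consequences of stability and of the Massey-product description of $\Pi$.
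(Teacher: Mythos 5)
Your proposal does not prove the stated theorem; it proves roughly half of it, and you acknowledge this yourself. The linear part is essentially correct: well-definedness and injectivity of $\mu\colon A\mapsto p\circ A\circ i$ follow as you say from stability ($\Hom(\VV,\OO)=0$, $\Hom(\VV,\wt{\VV})=0$, simplicity of $\VV$), and your obstruction-theoretic identification of $\im\mu$ with $\ker\Pi_\phi=(\im\Pi_\phi)^\perp$ is sound \emph{granted} the "crux identity" $\lan\chi,s'\ran=\pm\lan\phi,MP(s',\phi,s)\ran$, which you assert but do not prove. That identity is true and fillable: writing $\phi\circ g'=\psi\circ p$ for an extension $g'$ of $s'$ (so $g'\circ i=s'$), one gets $\lan\phi,MP(s',\phi,s)\ran=\lan\psi,s\ran$, and comparing $i[1]\circ\chi\circ g'=\wt{s}[1]\circ\psi\circ p$ in $\Ext^1\big(\wt{\VV},\wt{\VV}\big)$ via the cyclic symmetry of the trace yields $\lan\chi,s'\ran=\pm\lan\psi,s\ran$. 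So up to this omitted verification, you have a linear isomorphism onto the conormal space $\fg_\phi$.

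The genuine gap is the third step: you never show that $\mu$ carries the commutator on $\End\big(\wt{\VV}\big)/\lan\id\ran$ to the conormal Lie bracket $[\xi,\eta]=d\{f,g\}|_\phi$. What you offer there is a plan ("differentiate the Massey-product formula and match with the commutator"), together with the admission that "this is where the real work lies" --- but that matching \emph{is} the theorem. The conormal bracket involves the first-order variation of $\Pi$ at $\phi$, hence genuinely higher ($m_3$-type) structure relative to the pointwise Massey-product formula of Lemma~\ref{FO-gen-formula-lem}, and nothing in your argument controls it; a linear isomorphism onto $\fg_\phi$ carrying no bracket information is strictly weaker. Note also that the bracket statement is exactly what the present paper uses downstream (the commutator relations for $h_i$, $e_i$ in Lemma~\ref{conormal-Lie-lemma}\,(i) are computed in $\End\big(\wt{\VV}\big)$ and transported to $\fg$ via this theorem), so the gap is not cosmetic. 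For comparison: the paper itself gives no proof at all, quoting the result from the reference [Gor] (Gorodetsky--Markarian), where the identification of the two Lie brackets is the main content; your attempt reconstructs the comparatively formal vector-space half but leaves open precisely the part that that reference exists to establish.
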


Note that in particular, the subspace $(\im \Pi_x)^\perp\sub \lan\phi\ran^\perp$ is equal to the image of the map \eqref{conormal-map}.

\section[FO brackets associated with elliptic curves in G(2,5)]{FO brackets associated with elliptic curves in $\boldsymbol{G(2,5)}$}

\subsection[Proof of Theorem A]{Proof of Theorem \ref{TheoremA}}

\begin{Lemma}\label{codim2-lem}
The subset $Z\sub \Gr\big(5,{\bigwedge}^2 V\big)$ of $5$-dimensional subspaces $W\sub {\bigwedge}^2 V$ such that $\dim(\P W\cap G(2,V))\ge 2$ has codimension $>1$.
\end{Lemma}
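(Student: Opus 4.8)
The plan is to run a dimension count on an incidence variety and conclude via the fiber-dimension theorem. First I would set $M:=\Gr\big(5,{\bigwedge}^2 V\big)$, so that $\dim M=25$, recall $\dim G(2,V)=6$, and introduce the incidence variety
\[
I:=\left\{(W,p)\in M\times G(2,V) : p\in \P W\right\},
\]
with its two projections $q\colon I\to G(2,V)$ and $\pi\colon I\to M$. Over a fixed point $p\in G(2,V)\sub \P\big({\bigwedge}^2 V\big)$ the fiber $q^{-1}(p)$ is the set of $5$-dimensional subspaces $W$ containing the line $\langle p\rangle$, i.e.\ it is $\Gr\big(4,({\bigwedge}^2 V)/\langle p\rangle\big)\cong\Gr(4,9)$, of dimension $20$. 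Thus $q$ is a Grassmannian bundle over the irreducible $6$-dimensional variety $G(2,V)$, and so $I$ is irreducible of dimension $6+20=26$.

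Next I would analyze $\pi$, whose fiber $\pi^{-1}(W)$ is precisely $\P W\cap G(2,V)$, so that $\dim\pi^{-1}(W)=\dim(\P W\cap G(2,V))$. Let $I_{\ge2}\sub I$ be the locus where the local fiber dimension of $\pi$ is $\ge 2$; this is closed by upper semicontinuity of fiber dimension, and $Z=\pi(I_{\ge2})$ by definition of $Z$. The point that makes $I_{\ge2}$ a proper subset of $I$ is that a generic linear section $E_W=\P W\cap G(2,V)$ is a smooth elliptic curve, hence $1$-dimensional (as recalled in the introduction); taking such a generic $W$ and any $p\in E_W$ exhibits a point of $I\setminus I_{\ge2}$. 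Since $I$ is irreducible of dimension $26$, this forces $\dim I_{\ge2}\le 25$.

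To finish, I would observe that for every $W\in Z$ the fiber of $\pi|_{I_{\ge2}}$ over $W$ is the union of the $\ge 2$-dimensional components of $\P W\cap G(2,V)$, hence has dimension $\ge 2$. Applying the fiber-dimension estimate componentwise to $\pi|_{I_{\ge2}}\colon I_{\ge2}\to Z$ then yields
\[
\dim Z\le \dim I_{\ge2}-2\le 25-2=23,
\]
whence $\codim_M Z\ge 25-23=2>1$, as claimed.

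The underlying geometry is elementary, so the main thing to get right is the fiber-dimension bookkeeping: verifying that $I_{\ge2}$ is genuinely proper in $I$ (for which the single generic $W$ with a one-dimensional section suffices, guaranteed by the elliptic-curve case), and checking that passing to the $\ge 2$-dimensional fibers of $\pi$ really subtracts $2$ from the dimension even when $I_{\ge2}$ or $Z$ is reducible. The latter is handled by arguing on each irreducible component of $I_{\ge2}$ separately and noting that over a generic point of any component of $Z$ some single component of $I_{\ge2}$ must already contribute a fiber of dimension $\ge 2$.
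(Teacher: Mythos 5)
Your proof is correct, and it takes a genuinely different route from the paper's. The paper also runs a dimension count, but on a different incidence variety: it introduces the flag variety $F$ of pairs $L\sub W$ with $\dim L=3$, $\dim W=5$ and $\P L\cap G(2,V)\neq\varnothing$, and exploits the elementary fact that a plane $\P L\cong\P^2$ inside $\P W\cong\P^4$ must meet any $2$-dimensional subvariety; consequently the preimage $\wt Z$ of $Z$ in $F$ fibers over $Z$ with \emph{full} fibers $\Gr(3,5)$ of dimension exactly $6$, so $\dim Z=\dim\wt Z-6\le \dim F-1-6$, and the work goes into bounding $\dim F\le 30$ by a chain of two further fibrations (over the locus $Y$ of $3$-planes meeting $G(2,V)$, and then over $G(2,V)$ itself). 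Your argument instead works directly with the universal family $I$ of pointed linear sections, whose dimension count ($26$) is immediate, and extracts the codimension from upper semicontinuity of fiber dimension plus the fiber-dimension theorem applied to $\pi|_{I_{\ge 2}}$. The trade-off is exactly the one you identify: the paper's intermediate $3$-plane trick makes the fibration over $Z$ exact, so no semicontinuity or reducibility bookkeeping is needed, at the price of a longer chain of auxiliary varieties; your route is shorter and more standard, but the final step over possibly reducible $Z$ and $I_{\ge 2}$ requires the componentwise care you sketch (choosing, for a maximal-dimensional component $Y_1$ of $Z$, a component of $I_{\ge 2}$ dominating $Y_1$ whose generic fibers over $Y_1$ are $\ge 2$-dimensional), which is a correct and complete fix. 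Both proofs rest on the same essential input, namely that a generic $W$ cuts out a curve, which makes the bad locus proper inside an irreducible incidence variety.
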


\begin{proof}
Let us denote by $F$ the variety of flags $L\sub W\sub {\bigwedge}^2 V$, where $\dim(L)=3$, $\dim(W)=5$, such that $\P L\cap G(2,V)\neq\varnothing$.
We claim that $F$ is irreducible of dimension $\le 30$. Note that we have a proper closed subset $\wt{Z}\sub F$ consisting of $(L,W)$ such that $\dim(\P W\cap G(2,V))\ge 2$
(as an example of a point in $F\setminus \wt{Z}$, we can take $W$ such that $E_W=\P W\cap G(2,V)$ is an elliptic curve and pick $\P L\sub \P W $ intersecting $E_W$).
Since $\wt{Z}$ fibers over $Z$ with fibers $\Gr(3,5)$, our claim would imply that $\dim\big(\wt{Z}\big)=\dim Z+6<30$, i.e., $\dim Z<24$, as required.

To estimate the dimension of $F$, we observe that we have a fibration $F\!\to Y$ with fibers~$G(2,7)$,
where $Y\sub \Gr\big(3,{\bigwedge}^2 V\big)$ is the subvariety of $3$-dimensional subspaces $L$ such that
$\P L\cap G(2,V)\neq \varnothing$. Thus, it is enough to prove that $Y$ is irreducible of dimension $\le 20$. Now we use a surjective map $\wt{Y}\to Y$, where
$\wt{Y}$ is the variety of flags $\ell\sub L\sub {\bigwedge}^2 V$, where $\dim(\ell)=1$, $\dim(L)=3$, such that $\ell\in G(2,V)$. We have a fibration
$\wt{Y}\to G(2,V)$ with fibers $G(2,9)$, hence~$\wt{Y}$ is irreducible of dimension $6+14=20$. Hence, $Y$ is irreducible of dimension $\le 20$.
\end{proof}

\begin{proof}[Proof of Theorem \ref{TheoremA}]
First, we can apply Proposition \ref{complete-int-prop} to an elliptic curve $E_W\sub X=G(2,V)$. Namely,
as a bundle $P$ on $X$ we take $\UU^\vee$, the dual of the universal subbundle.
We can view the embedding
\begin{gather*}R:=W^\perp\to {\bigwedge}^2V^*=H^0(X,\OO(1)),\end{gather*}
where $\OO(1)=\det(\UU^\vee)$, as a regular section
$F\in H^0(X,N)$, where $N=R^*\ot \OO(1)$.
It is easy to see that we have a $\GL(V)$-invariant identification
\begin{gather*}\om_X\simeq \det(V)^{-2}\ot \OO(-5).\end{gather*}
Thus, by adjunction we get an isomorphism
\begin{gather*}\om_{E_W}\simeq \det(N)\ot \om_X|_{E_W}\simeq \det(R^*)\ot \det(V)^{-2}\ot \OO_{E_W}.\end{gather*}
Since $\det(R^*)\simeq \det\big({\bigwedge}^2 V\big)\ot \det(W^*)\simeq \det(V)^4\ot \det(W^*)$, we can rewrite this as
\begin{equation}\label{om-EW-eq}
\om_{E_W}\simeq \det(W^*)\ot \det(V)^2\ot \OO_{E_W}.
\end{equation}

The vanishings \eqref{N-coh-van-eq} and \eqref{F-N-coh-van-eq} in this case follow from the
well known vanishings
\begin{gather*}H^*\big(X,\UU^\vee(-i)\big)=0 \qquad \text{for}\quad 1\le i\le 5,\\
\Ext^*\big(\UU^\vee,\UU^\vee(-i)\big)=0 \qquad\text{for}\quad 1\le i\le 3, \\
\Ext^{<6}\big(\UU^\vee,\UU^\vee(-4)\big)=\Ext^{<6}\big(\UU^\vee,\UU^\vee(-5)\big)=0
\end{gather*}
(see~\cite{Kap}).
Thus, Proposition \ref{complete-int-prop} gives a formula for $\Pi_W$.

This shows that the association $W\mapsto \Pi_W$ gives a regular morphism
\begin{gather*}f\colon\ \Gr\left(5,{\bigwedge}^2V\right)\to \P H^0\left(\P V,{\bigwedge}^2T\right).\end{gather*}

Furthermore, we claim that
\begin{gather*}f^*\OO(1)\simeq \OO_{\Gr(5,{\bigwedge}^2V)}(1)\ot \det(V)^{2}.\end{gather*}
Indeed, we have a family of Gorenstein curves $\pi\colon \CC\to B=\Gr\big(5,{\bigwedge}^2 V\big)\setminus Z$ (with $\CC_W=E_W$),
where $Z$ was defined in Lemma \ref{codim2-lem}, such that
\begin{gather*}\om_{\CC/B}\simeq \pi^*\big(\OO(1)\ot \det(V)^2\big).\end{gather*}
Indeed, this is implied by the argument leading to \eqref{om-EW-eq}, which works for any curve (not necessarily smooth) cut out by $\P W$ in $G(2,V)$.
This family of curves is equipped with a family of vector bundles $\VV$ (the pull-back of $\UU^\vee$ on $G(2,V)$), so that $\P H^0(\CC_W,\VV_W)^\vee=\P V$.
As explained in \cite[Section~4.2]{HP-bih},
we can view the corresponding constant family of projective spaces $\P V\times B$ as the coarse moduli space of a substack in the relative moduli of complexes on $\CC$.
Now \cite[Proposition~4.1]{HP-bih} implies that the relation $f^*\OO(1)=\OO(1)\ot \det(V)^{2}$ holds over $B=\Gr\big(5,{\bigwedge}^2 V\big)\setminus Z$.
Since $Z$ has codimension $\ge 1$, it holds over the entire \smash{$\Gr\big(5,{\bigwedge}^2V\big)$}.

Next, since $H^0\big(\Gr\big(5,{\bigwedge}^2V\big),\OO(1)\big)\simeq {\bigwedge}^5\big({\bigwedge}^2V\big)^*$, the map $f$ is given
by a $\GL(V)$-invariant linear map
\begin{gather*}{\bigwedge}^{5}\left({\bigwedge}^2V\right)\to H^0\left(\P V,{\bigwedge}^2T\right)\ot \det(V)^2.\end{gather*}
To show that this map coincides with $\pi_{5,2}$, up to a constant factor, it remains to show
that the space $\Hom_{\GL(V)}\bigl(\bigwedge{5}\big({\bigwedge}^2V\big), H^0\big(\P V,{\bigwedge}^2T\big)\ot \det(V)^2\bigr)$ is $1$-dimensional.

The representation of $\GL(V)$ on $H^0\big(\P V,{\bigwedge}^2T\big)$ is easy to identify due to the exact sequence
\begin{gather*}0\to \k\to V\ot V^*\ot {\bigwedge}^2 V\ot S^2 V^*\to H^0\left(\P V,{\bigwedge}^2T\right)\to 0.\end{gather*}
Using the Littlewood--Richardson rule, we deduce
\begin{gather*}H^0\left(\P V,{\bigwedge}^2T\right)\ot \det(V^*)\simeq \Si^{3,1,1}(V^*),
\end{gather*}
where $\Si^{\la}$ denotes the Schur functor associated with a partition $\la$.
It follows that
\begin{gather*}H^0\left(\P V,{\bigwedge}^2T\right)\ot \det(V)^2\simeq \Si^{3,3,2,2}(V).\end{gather*}
On the other hand, the decomposition of the plethysm $e_5\circ e_2$ (see \cite[Section~I.8, Example~6, p.~138]{Mac}) shows that $\Si^{3,3,2,2}(V)$ appears with multiplicity $1$ in
the $\GL(V)$-representation $\bigwedge^{5}\big({\bigwedge}^2V\big)$. This implies the claimed assertion about $\GL(V)$-maps.
\end{proof}

\subsection[Rank stratification for a bracket of type q\_\{5,2\}]{Rank stratification for a bracket of type $\boldsymbol{q_{5,2}}$}
\label{Rank-strat-sec}

Let $E$ be an elliptic curve, $\VV$ be a stable vector bundle of rank $2$ and degree $5$. We consider the FO bracket $\Pi$ on the projective space
$\P \Ext^1(\VV,\OO)\simeq \P H^0(\VV)^*$.
We want to describe the corresponding rank stratification of $\P H^0(\VV)^*=\P^4$.
More precisely, $\Pi$ is generically nondegenerate, and we are going to determine the degeneration locus $\DD_E\sub \P^4$ (where $\rk \Pi\le 2$) and
the zero locus $S_E$ of $\Pi$.

For every point $p\in E$, we consider the subspace $\La_p:=\VV|_p^*\sub H^0(\VV)^*$ and the corresponding projective line $\P \La_p\sub \P H^0(\VV)^*$.
Recall that the rank of $\Pi$ at a point corresponding to an extension $\wt{\VV}$ is equal to $5-\dim\End\bigl(\wt{V}\bigr)$ (see \cite[Proposition~2.3]{HP-bih}).

\begin{Lemma}\label{Pi-rank-lem}\quad
\begin{itemize}\itemsep=0pt
\item[$(i)$] The bracket $\Pi$ vanishes at the point of $\P \Ext^1(\VV,\OO)$ corresponding to an extension
\begin{gather*}0\to \OO\to \wt{\VV}\to \VV\to 0\end{gather*}
if and only if this extension splits under $\OO\to \OO(p)$ for some point $p\in E$, which happens if and only if
$\wt{\VV}\simeq \OO(p)\oplus \VV'$, where $\VV'$ is semistable of rank $2$ and degree $4$.
Furthermore, in this case $\dim \End(\VV')=2$, so $\VV'$ is either indecomposable, or $\VV'\simeq L_1\oplus L_2$, where $L_1$ and $L_2$ are nonisomorphic line bundles of degree $2$.

\item[$(ii)$] The bracket $\Pi$ has rank $\le 2$ if and only the corresponding extension $\wt{\VV}$ is unstable, or equivalently, there exists a line bundle $L_2$ of degree $2$ such that
the extension splits over the unique embedding $L_2\hra \VV$. In other words, the extension class comes from a subspace of the form
\begin{equation}\label{W-L2-eq}
W_{L_2}:=H^0(L_2)^\perp \sub H^0(\VV)^*=V,
\end{equation}
where we use the unique embedding $L_2\to \VV$ and consider the induced embedding $H^0(L_2)\hra H^0(\VV)$.

\item[$(iii)$] Each plane $\P W_{L_2}\sub \P V$
is a Poisson subvariety, and there is an embedding of the curve~$E$ into $\P W_{L_2}$ by a degree $3$ linear system, so that $\P W_{L_2}\setminus E$ is a symplectic leaf.
\end{itemize}
\end{Lemma}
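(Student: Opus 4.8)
The unifying tool is the identity $\rk\Pi_\phi=5-\dim\End(\wt\VV)$ recalled from \cite[Proposition~2.3]{HP-bih} just before the statement, which reduces all three parts to computing $\dim\End(\wt\VV)$ for the rank-$3$, degree-$5$ bundle $\wt\VV$ sitting in $0\to\OO\to\wt\VV\to\VV\to0$ with class $\phi$. Since the rank of a Poisson bivector is even and the ambient dimension is $4$, we have $\dim\End(\wt\VV)\in\{1,3,5\}$, i.e.\ $\rk\Pi_\phi\in\{4,2,0\}$. The basic structural input is Atiyah's classification on an elliptic curve: as $\gcd(3,5)=1$, a semistable $\wt\VV$ is automatically stable, whence $\End(\wt\VV)=\k$ and $\rk\Pi_\phi=4$. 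Thus the degeneration locus is exactly the unstable locus, and the plan is to read off $\dim\End(\wt\VV)$ from the instability strata of the extension, expressed in terms of $\phi$.

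For $(i)$ I first translate ``splits under $\OO\to\OO(p)$'' into a condition on $\phi$. Applying $\Hom(\VV,-)$ to $0\to\OO\to\OO(p)\to\OO_p\to0$, stability of $\VV$ gives $\Hom(\VV,\OO)=\Hom(\VV,\OO(p))=0$, so the connecting map $\VV|_p^{*}=\Hom(\VV,\OO_p)\to\Ext^1(\VV,\OO)$ is injective with image $\La_p$; hence $\phi$ dies in $\Ext^1(\VV,\OO(p))$ iff $\phi\in\La_p$. For such $\phi$ I realize $\wt\VV$ as the fiber product $\OO(p)\times_{\OO_p}\VV$ and project onto the first factor, obtaining $0\to\VV'\to\wt\VV\to\OO(p)\to0$ with $\VV'=\ker(\VV\to\OO_p)$ of rank $2$ and degree $4$; this splits because $\Ext^1(\OO(p),\VV')=H^1(\VV'(-p))=0$ by semistability, giving $\wt\VV\simeq\OO(p)\oplus\VV'$. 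Stability of $\VV$ forces $\VV'$ semistable, and since $\gcd(2,4)=2$ there is no stable such bundle; I rule out $\VV'\simeq L^{\oplus2}$ because it would give two independent maps $L\to\VV$, whereas $\dim\Hom(L,\VV)=1$ for a degree-$2$ sub-line-bundle. Hence $\dim\End(\VV')=2$, and a Serre-duality count of $\Hom(\OO(p),\VV')$, $\Hom(\VV',\OO(p))$ and $\End(\VV')$ yields $\dim\End(\wt\VV)=5$. The converse, which I expect to be the \emph{main obstacle}, is to show that $\dim\End(\wt\VV)=5$ occurs only in this split case; I plan to get this from the instability analysis of $(ii)$ by checking that every other unstable stratum gives exactly $\dim\End=3$.

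For $(ii)$, Serre duality identifies the pullback $\Ext^1(\VV,\OO)\to\Ext^1(L_2,\OO)$ with the transpose of $H^0(L_2)\hra H^0(\VV)$, so $\phi$ splits over the embedding $L_2\hra\VV$ (unique, as $\dim\Hom(L_2,\VV)=1$) iff $\phi\in H^0(L_2)^{\perp}=W_{L_2}$. Splitting over $L_2$ means exactly that $L_2$ lifts to a sub-line-bundle of $\wt\VV$, i.e.\ a subsheaf of slope $2>5/3$, so it is equivalent to instability of $\wt\VV$; conversely the maximal destabilizing subsheaf maps nontrivially to the stable quotient $\VV$ (it cannot lie in $\OO$) and thereby produces a degree-$2$ sub-line-bundle of $\VV$ that lifts. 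It then remains to prove $\wt\VV$ unstable $\iff\dim\End(\wt\VV)\ge3$; coprimality already gives stable $\iff\dim\End=1$, so the content is the lower bound $\dim\End(\wt\VV)\ge3$ for unstable $\wt\VV$, which I obtain from the Harder--Narasimhan filtration $0\to L_2\to\wt\VV\to\mathcal{Q}\to0$ (with $\mathcal{Q}$ of rank $2$, degree $3$) by computing $\Hom$ along it using the slope-driven vanishings.

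For $(iii)$, set $M:=\VV/L_2$, a line bundle of degree $3$. Applying $\Ext^\bullet(-,\OO)$ to $0\to L_2\to\VV\to M\to0$ identifies $W_{L_2}=\ker(\Ext^1(\VV,\OO)\to\Ext^1(L_2,\OO))$ with the image of $\Ext^1(M,\OO)=H^0(M)^{*}$ (the map is injective since $\Hom(L_2,\OO)=0$), so $\P W_{L_2}\simeq\P H^0(M)^{*}$ and the degree-$3$ linear system $|M|$ embeds $E$ as a plane cubic. The plan is to show that the extensions with class in $W_{L_2}$ are precisely those induced from extensions $0\to\OO\to\wt\VV_0\to M\to0$, so that the restriction of $\Pi$ to $\P W_{L_2}$ is, up to rescaling, the Feigin--Odesskii bracket of type $q_{3,1}$ attached to $(E,M)$; this uses the Massey-product formula of Lemma~\ref{FO-gen-formula-lem} together with the compatibility of the triple products with $L_2\hra\VV$. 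In particular $\P W_{L_2}$ is a Poisson subvariety, and the classical structure of the $q_{3,1}$ bracket on $\P^2$ --- vanishing exactly on the embedded cubic $E$ and symplectic on its complement --- gives the final assertion.
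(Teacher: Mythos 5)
Your proposal is correct, and for parts (i) and (ii) it is essentially the paper's own argument: everything is reduced to $\rk\Pi_\phi=5-\dim\End\big(\wt{\VV}\big)$, and the unstable strata of $\wt{\VV}$ are classified via Harder--Narasimhan theory. The case analysis you defer (``every other unstable stratum gives exactly $\dim\End=3$'') is precisely what the paper carries out: for a nontrivial extension the HN slopes are positive, so the HN filtration cannot have three rank-one pieces, hence $\wt{\VV}$ splits as $L\oplus\VV'$ with $(\deg L,\deg\VV')=(1,4)$ or $(2,3)$, while $\deg L\ge 3$ is impossible; the $(2,3)$ stratum gives $\dim\End\big(\wt{\VV}\big)=1+1+0+1=3$ and the $(1,4)$ stratum gives $3+\dim\End(\VV')=5$. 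So your claim is true, but when executing it make sure the HN analysis covers the case where the maximal destabilizing subsheaf has rank $2$ (this is exactly the $(1,4)$ stratum); your sketch in (ii) writes the filtration only as $0\to L_2\to\wt{\VV}\to\QQ\to 0$, though your converse argument (``the maximal destabilizing subsheaf maps nontrivially to $\VV$'') already contains what is needed. A shortcut you could borrow from the paper: $\dim\End(\VV')=2$ follows for free from $\dim\End\big(\wt{\VV}\big)\le 5$ (nonnegativity of the rank of a bivector), with no need to exclude $\VV'\simeq L^{\oplus 2}$ by hand, although your count $\dim\Hom(L,\VV)=1$ also works.

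Part (iii) is where you genuinely diverge. The paper invokes the conormal Lie algebra theorem (Theorem \ref{conormal-thm}, from \cite{Gor}): on $\P W_{L_2}\setminus E$ one has $\wt{\VV}\simeq \VV_{L_3}\oplus L_2$, and the image of $\End\big(\wt{\VV}\big)/\lan \id\ran$ under the map \eqref{conormal-map} is computed to be $H^0(L_2)$, exactly the conormal space of the plane; this yields the Poisson-subvariety and symplectic-leaf statements in one stroke. You instead propose to identify $\P W_{L_2}\simeq \P\Ext^1(M,\OO)$ and to prove, via functoriality of the triple Massey products of Lemma \ref{FO-gen-formula-lem} under $\VV\to M$, that the induced bracket is the FO bracket of type $q_{3,1}$ attached to $(E,M)$, then quote its classical structure (vanishing exactly on the plane cubic, symplectic on the complement). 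This route is viable and buys a stronger conclusion --- that the induced bracket on each degeneracy plane is itself an FO bracket, which the paper never asserts --- at the cost of a more delicate homological argument. Two points of care: ``the restriction of $\Pi$'' only makes sense once tangency is known, so you should organize the Massey computation to show that $\Pi_\phi(s_1\we s_2)$ depends only on the images of $s_1$, $s_2$ under $H^0(\VV)\to H^0(M)$, which simultaneously gives the Poisson-subvariety property (take $s_1\in H^0(L_2)$) and the identification with $q_{3,1}$; and you still need the rank bound from (ii) to conclude that $\P W_{L_2}\setminus E$ is a leaf of $\big(\P^4,\Pi\big)$, not merely a symplectic variety for the induced structure.
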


\begin{proof}
(i) Suppose a nontrivial extension
\begin{gather*}0\to \OO\to \wt{\VV}\to \VV\to 0\end{gather*}
splits under $\OO\to \OO(p)$. Then $\wt{\VV}$ is an extension of $\OO(p)$ by $\VV'$ where $\VV'\sub \VV$ is the kernel of the corresponding
surjective map $\VV\to \OO_p$. Hence, $\VV'$ is semistable of slope $2$, which implies that
\begin{gather*}\wt{\VV}\simeq \OO(p)\oplus \VV'.\end{gather*}
It follows that $\dim \End(\VV')\ge 2$, and so
\begin{gather*}\dim \End\big(\wt{\VV}\big)=3+\dim \End(\VV')\ge 5.\end{gather*}
Hence, $\Pi_E$ vanishes on the points of the line $\P \La_p\sub\P V$, and we have $\dim \End(\VV')=2$, which means that either $\VV'$ is indecomposable
or $\VV'\simeq L_1\oplus L_2$, for two nonisomorphic line bundles $L_1$,~$L_2$ of degree $2$.

Conversely, assume $\Pi$ vanishes at the point corresponding to $\wt{\VV}$, so $\dim \End\big(\wt{\VV}\big)=5$.
Then HN-components of \smash{$\wt{\VV}$} cannot be three line bundles (since they would have to have different positive degrees that add up to $5$),
so $\wt{\VV}=L\oplus \VV'$ where $L$ is a line bundle and $\VV'$ is semistable of rank $2$, $\deg(L)>0$, $0<\deg(\VV')$, $\deg(L)+\deg(\VV')=5$.

The case $\deg(L)=1$ leads to the locus discussed above. If $\deg(L)=2$ and $\deg(\VV')=3$ then $\dim\Hom(\VV',L)=1$, so we get $\dim \End(\VV')=3$ which is impossible.
If $\deg(L)\ge 3$, then~${\deg(\VV')\le 2}$ and $\dim\Hom(\VV',L)\ge 4$, so $\dim \End(\VV)>5$, a contradiction.

(ii) The rank of $\Pi$ is $\le 2$ at $\wt{\VV}$ if and only if $\dim \End\bigl(\wt{\VV}\bigr)\ge 3$. Clearly, such $\wt{\VV}$ has to be unstable. Conversely, any unstable $\wt{\VV}$ would have
form $L\oplus \VV'$ with either $\Hom(L,\VV')\neq 0$ or~${\Hom(\VV',L)\neq 0}$, hence $\dim \End\bigl(\wt{\VV}\bigr)\ge 3$.

Note that \smash{$\mu\big(\wt{\VV}\big)=5/3$}. Hence, if the extension splits over some $L_2\sub \VV$, then $\wt{\VV}$ is unstable. Conversely, if $\wt{\VV}$ is unstable then either it has a line subbundle
of degree $2$, or a semistable subbundle $\VV'$ of rank $2$ and degree $\ge 4$. But any such $\VV'$ has a line subbundle of degree $\ge 2$.

(iii) We can identify $H^0(L_2)^\perp$ with $H^0(L_3)^*\sub H^0(\VV)^*$, where $L_3:=\VV/L_2$. It is easy to see that the intersection of $\P W_{L_2}$ with the zero locus of $\Pi$
is exactly the image of $E$ under the map given by $|L_3|$.

Given an extension $\wt{\VV}\to \VV$, split over $L_2\sub \VV$, the splitting $L_2\to \wt{\VV}$ is unique, and the quotient $\wt{\VV}/L_2$ is an extension of $L_3=\VV/L_2$ by $\OO$.
It is well known that for points of $\P W_{L_2}\setminus E$ the latter extension is stable, so $\VV_{L_3}=\wt{\VV}/L_2$ is a stable bundle of rank $2$ with determinant $L_3$.
Since $\Ext^1(\VV_{L_3},L_2)=0$, we deduce that $\wt{\VV}=\VV_{L_3}\oplus L_2$.
Now we can calculate the image of the map \eqref{conormal-map}. The space $\End\big(\wt{\VV}\big)/\lan\id\ran$ has a basis $\lan\id_{L_2}, e\ran$, where $e$ is a generator of
$\Hom(\VV_{L_3},L_2)$. Their images under \eqref{conormal-map} both factor through $L_2\to E$, hence the image of \eqref{conormal-map} (which is $2$-dimensional)
is $H^0(L_2)\sub H^0(\VV)$. But this is exactly the conormal subspace to the projective plane $\P W_{L_2}$. This shows that $\P W_{L_2}\setminus E$ (and hence
$\P W_{L_2}$) is a Poisson subvariety. Since the rank of $\Pi$ on $\P W_{L_2}\setminus E$ is equal to $2$ and $\Pi|_E=0$, we deduce that $\P W_{L_2}\setminus E$ is a
symplectic leaf.
\end{proof}

By Lemma \ref{Pi-rank-lem}\,(i), the vanishing locus of $\Pi$ corresponds to extensions $\VV$ by $\OO$, which split over $\OO(p)$.
This is the union $S_E$ of the lines $\P \La_p$, where $\La_p=\VV|_p^*\sub \P H^0(\VV)^*$, over $p\in E$.
The surface $S_E$ is the image of the natural map $\P\big(\VV^\vee\big)\to \P(V)$, associated with the embedding of bundles $\VV^\vee\to V\ot \OO_E$.
We will prove (see Lemma \ref{S-lem} below) that in fact this map induces an isomorphism of the projective bundle $\P\big(\VV^\vee\big)$ with $S_E$.

\begin{Lemma}\label{inj-lem}
Let $\EE$ be a vector bundle over a smooth curve $C$ and let $W\to H^0(C,\EE)$ be a linear map from a vector space $W$, such that
for any $x\in C$ the composition $p_x\colon W\to H^0(C,\EE)\to \EE|_x$ is surjective, so that we have a morphism $f\colon\ \P\big(\EE^\vee\big)\to \P(W^*)$. Assume that we have a closed subset $Z\sub \P\big(\EE^\vee\big)$ with the following properties.
\begin{itemize}
\item For every $x,y\in C$, $x\neq y$, consider $p_x(\ker(p_y))\sub \EE|_x$.
Then any $\ell\in \P\big(\EE^\vee|_x\big)$, which is orthogonal to $p_x(\ker(p_y))$, is contained in $Z$.
\item For every $x\in C$, consider the map $W\to H^0(\EE|_{2x})$ and the induced map
\begin{gather*}K_x:=\ker(W\to \EE|_x)\to T_x^*C\ot \EE|_x\end{gather*}
$($where we use the identification $T_x^*C\ot \EE|_x\!=\ker\big(H^0(\EE|_{2x}\big)\!\to \EE|_x))$.
Then any ${\ell\in \P\big(\EE^\vee|_x\big)}$, which is orthogonal to the image of $K_x\ot T_xC$, is contained in $Z$.
\end{itemize}
Then the map $\P\big(\EE^\vee\big)\setminus Z\to \P(W^*)$ is a locally closed embedding.
\end{Lemma}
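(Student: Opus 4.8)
\emph{The plan.} I would prove the statement by showing that $f$ restricted to $U:=\P\big(\EE^\vee\big)\setminus Z$ is injective on points and has everywhere injective differential, and then upgrade this to a locally closed embedding using the properness of $\P\big(\EE^\vee\big)$. Throughout, a point of $\P\big(\EE^\vee\big)$ over $x\in C$ is a line $\langle\xi\rangle\subset \EE^\vee|_x=(\EE|_x)^*$; since $p_x\colon W\to \EE|_x$ is surjective, its dual $p_x^*\colon (\EE|_x)^*\hookrightarrow W^*$ is injective, and $f$ sends $(x,\langle\xi\rangle)$ to the line $\langle p_x^*(\xi)\rangle=\langle\xi\circ p_x\rangle\subset W^*$.

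\emph{Injectivity on $U$.} First, $f$ is automatically injective on each fiber: two lines $\langle\xi_1\rangle,\langle\xi_2\rangle\subset(\EE|_x)^*$ have the same image iff $p_x^*(\xi_1)\parallel p_x^*(\xi_2)$, which by injectivity of $p_x^*$ forces $\langle\xi_1\rangle=\langle\xi_2\rangle$. Next, if two points over distinct $x\neq y$ collide, i.e.\ $\xi\circ p_x=c\,(\zeta\circ p_y)$ in $W^*$ for some $c\neq 0$, then $\xi\circ p_x$ vanishes on $\ker(p_y)$, so $\langle\xi\rangle$ is orthogonal to $p_x(\ker p_y)$ and the first hypothesis gives $(x,\langle\xi\rangle)\in Z$; by the symmetric argument $(y,\langle\zeta\rangle)\in Z$ as well. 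Thus a collision with distinct base points can occur only \emph{inside} $Z$, so for every $u\in U$ I get $f^{-1}(f(u))=\{u\}$ with $u$ the only preimage in all of $\P\big(\EE^\vee\big)$; in particular $f(u)\notin f(Z)$.

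\emph{Injectivity of the differential.} Fix $u=(x,\langle\xi\rangle)\in U$. The restriction of $f$ to the fiber is the linear embedding induced by $p_x^*$, so $df_u$ is injective on vertical tangent vectors, with image spanning $p_x^*\big((\EE|_x)^*\big)/\langle\xi\circ p_x\rangle$; dualizing $0\to K_x\to W\to\EE|_x\to 0$ identifies $W^*/p_x^*\big((\EE|_x)^*\big)$ with $K_x^*$. Hence $df_u$ is injective iff the horizontal derivative, in a direction $0\neq v\in T_xC$, has nonzero image in $K_x^*$. Differentiating $w\mapsto\xi(w(x))$ along $v$ for $w\in K_x$ (so $w(x)=0$) yields precisely the pairing $w\mapsto\xi(\langle j^1_x(w),v\rangle)$, where $j^1_x(w)\in T_x^*C\ot\EE|_x$ is the first jet — this is the main computation. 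This functional on $K_x$ is nonzero iff $\langle\xi\rangle$ is not orthogonal to the image of $K_x\ot T_xC$ in $\EE|_x$; by the second hypothesis, orthogonality would force $u\in Z$, so on $U$ the differential is injective.

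\emph{Upgrading to an embedding.} Since $\P\big(\EE^\vee\big)$ is proper, $f$ is proper and $\Sigma:=f\big(\P\big(\EE^\vee\big)\big)$, $B:=f(Z)$ are closed, so $\Sigma^{\circ}:=\Sigma\setminus B$ is locally closed in $\P(W^*)$. By the injectivity step no point of $U$ maps into $B$, whence $f(U)\subseteq\Sigma^{\circ}$ and $f^{-1}(\Sigma^{\circ})\subseteq U$; therefore $f^{-1}(\Sigma^{\circ})=U$ and $f$ restricts to a proper morphism $U\to\Sigma^{\circ}$ that is bijective and unramified. Over an algebraically closed field of characteristic $0$ a finite-type morphism bijective on closed points is universally injective, so $f|_U$ is a universally injective unramified morphism, i.e.\ a monomorphism; being also proper it is a closed immersion onto $\Sigma^{\circ}$. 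Composing with the locally closed embedding $\Sigma^{\circ}\hookrightarrow\P(W^*)$ shows $f|_U$ is a locally closed embedding. The only real obstacle is the jet identification in the differential step — recognizing the horizontal derivative of $\xi\circ p_x$, restricted to $K_x$, as $w\mapsto\xi(\langle j^1_x(w),v\rangle)$, and matching it exactly with the second hypothesis; everything else is bookkeeping with the surjections $p_x$ and the proper map $f$.
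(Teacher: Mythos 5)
Your proof is correct and takes essentially the same route as the paper's: set-theoretic injectivity comes from the first hypothesis exactly as in the paper (a collision over $x\neq y$ forces the functional to vanish on $p_x(\ker(p_y))$), and injectivity of the differential comes from the second hypothesis, your jet computation $w\mapsto\xi\big(\langle j^1_x(w),v\rangle\big)$ on $K_x$ being precisely the dual of the paper's formulation via the extension $H_\ell$ and the factorization $K_x\to T_x^*C\ot \EE|_x\to T_x^*C\ot \ell^{-1}$. The only difference is that you make explicit the final upgrade (properness, universal injectivity, unramifiedness, monomorphism, hence closed immersion onto $\Sigma^{\circ}$), a standard step the paper leaves implicit.
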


\begin{proof}
Assume that for $x\neq y$, we have two nonzero functionals $\phi_x\colon\EE|_x\to k$, $\phi_y\colon\EE|_y\to k$ such that $\phi_x\circ p_x=\phi_y\circ p_y$.
Then $(\phi_x\circ p_x)|_{\ker(\phi_y)}=0$. Hence, $\phi_x$ vanishes on $p_x(\ker(p_y))$. By assumption, this can happen only when $\phi_x$ is in $Z$.
Thus, the map from $\P\big(\EE^\vee\big)\setminus Z$ is set-theoretically one-to-one.

Next, we need to check that our map is injective on tangent spaces. The tangent space to~$\P\big(\EE^\vee\big)$ at a point corresponding to $\ell\sub \EE^\vee|_x$ can be
described as follows. Consider the canonical extension
\begin{gather*}0\to T^*_x C\ot \EE|_x\to H^0(\EE|_{2x})\to \EE|_x\to 0.\end{gather*}
Passing to the dual extension of $T_x C\ot \EE^\vee|_x$ by $\EE^\vee|_x$, and restricting it to $T_x C\ot \ell\sub T_x C\ot \EE^\vee|_x$, we get an extension
\begin{gather*}0\to \EE^\vee|_x\to H_\ell\to T_x C\ot \ell\to 0.\end{gather*}
Now the quotient $\bigl(\ell^{-1}\ot H_\ell\bigr)/\k$, where we use the natural embedding
\begin{gather*}k=\ell^{-1}\ot \ell\to \ell^{-1}\ot \EE^\vee|_x\to \ell^{-1}\ot H_{\ell},\end{gather*}
is identified with the tangent space $T_{\ell}\P\big(\EE^\vee\big)$.

The restriction of the map $H^0(\EE|_{2x})^\vee \to W^*$, dual to the natural map $W\to H^0(\EE|_{2x})$, to $H_{\ell}$, induces a map
\begin{gather*}\big(\ell^{-1}\ot H_{\ell}\big)/\k\to W^*/\ell,\end{gather*}
which is exactly the tangent map to $f$. It is injective if and only if the map $H_{\ell}\to W^*$ is injective.
Equivalently, the dual map $W\to H_{\ell}^*$ should be surjective. The latter map is compatible with (surjective) projections to $\EE|_x$,
so this is equivalent to surjectivity of the map
\begin{gather*}K_x=\ker(W\to \EE|_x)\to \ker(H_\ell^*\to \EE|_x)=T_x^*C\ot \ell^{-1}.\end{gather*}
The latter map factors as a composition
\begin{gather*}K_x\to T^*_xC\ot \EE|_x\to T_x^*C\ot \ell^{-1},\end{gather*}
so it is surjective (equivalently, nonzero) if and only if $\ell$ is not orthogonal to the image of $K_x\to T_x^*C\ot \EE|_x$. By assumption, this never
happens for points of $\P\big(\EE^\vee\big)\setminus Z$.
\end{proof}

\begin{Lemma}\label{S-lem}
The map $\P\big(\VV^\vee\big)\to S_E$ is an isomorphism.
\end{Lemma}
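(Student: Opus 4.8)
The plan is to apply Lemma~\ref{inj-lem} with $\EE=\VV$, $C=E$, $W=H^0(\VV)=V^*$ and $Z=\varnothing$, and then to upgrade the resulting locally closed embedding to an isomorphism by a properness argument. To set this up I first record that $\VV$ is globally generated. Indeed, from the evaluation sequence $0\to \VV(-x)\to \VV\to \VV|_x\to 0$ the cokernel of $p_x\colon H^0(\VV)\to \VV|_x$ embeds into $H^1(\VV(-x))$, and $\VV(-x)$ is stable of rank $2$ and degree $3$, so $H^1(\VV(-x))\simeq H^0(\VV^\vee(x))^*=0$. Hence every $p_x$ is surjective, the morphism $f\colon \P(\VV^\vee)\to \P(W^*)=\P(V)$ of Lemma~\ref{inj-lem} is precisely the natural map, and its image is $S_E$.

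It then remains to verify the two conditions of Lemma~\ref{inj-lem} with $Z=\varnothing$; concretely, I must show that in each case the relevant subspace of $\VV|_x$ is the whole fibre, so that no functional $\ell$ is orthogonal to it. For the first condition fix $x\neq y$. Then $\ker(p_y)=H^0(\VV(-y))$ and $p_x(\ker(p_y))$ is the image of the evaluation $H^0(\VV(-y))\to \VV(-y)|_x=\VV|_x$. This is all of $\VV|_x$ as soon as $\VV(-y)$ is globally generated at $x$, which follows as above from $H^1(\VV(-x-y))=0$, since $\VV(-x-y)$ is stable of degree $1$.

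For the second condition fix $x\in E$. Under the identification $\VV(-x)|_x\simeq T_x^*E\ot \VV|_x$, the jet map $K_x=H^0(\VV(-x))\to T_x^*E\ot \VV|_x$ appearing in the lemma is nothing but the evaluation $H^0(\VV(-x))\to \VV(-x)|_x$; hence its image spans $T_x^*E\ot \VV|_x$ (equivalently the contraction $K_x\ot T_xE\to \VV|_x$ is surjective) provided $\VV(-x)$ is globally generated at $x$, which holds because $H^1(\VV(-2x))=0$, with $\VV(-2x)$ stable of degree $1$. Thus both conditions of Lemma~\ref{inj-lem} are satisfied with $Z=\varnothing$, and $f$ is a locally closed embedding.

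Finally, $\P(\VV^\vee)$ is a projective variety, so $f$ is proper; a proper locally closed embedding has closed image and is therefore a closed embedding. Since the image of $f$ is by definition $S_E$, the map $f$ induces an isomorphism $\P(\VV^\vee)\simeq S_E$. The whole argument reduces to the vanishing of $H^1$ for stable bundles of positive degree on $E$, so no genuine difficulty arises; the only point deserving care is recognizing, in the second condition, that the jet map of Lemma~\ref{inj-lem} is the evaluation of the twist $\VV(-x)$ at $x$, which is exactly where injectivity on tangent spaces (and hence the immersion property) is secured.
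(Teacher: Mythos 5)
Your proof is correct and follows essentially the same route as the paper: both apply Lemma~\ref{inj-lem} with $Z=\varnothing$ and reduce the two conditions to the vanishing $H^1(\VV(-D))=0$ for degree-$2$ divisors $D$, which holds by stability of $\VV$. The only additions on your side are details the paper leaves implicit, namely the identification of the jet map with evaluation of $\VV(-x)$ at $x$ and the properness argument upgrading the locally closed embedding to an isomorphism onto $S_E$.
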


\begin{proof} We will check the conditions of Lemma \ref{inj-lem}. It suffices to check surjectivity of the maps $H^0(\VV)\to \VV|_x\oplus \VV|_y$ for $x\neq y$ and
of $H^0(\VV)\to H^0(\VV|_{2x})$. But this follows from the exact sequence
\begin{gather*}0\to \VV(-D)\to \VV\to \VV|_D\to 0\end{gather*}
for any effective divisor $D$ of degree $2$ and from the vanishing of $H^1(\VV(-D))$ by stability of~$\VV$.
\end{proof}

By Lemma \ref{Pi-rank-lem}\,(ii),
the degeneracy locus $\DD_E$ of our Poisson bracket (which is a quintic hypersurface) is the union of planes $\P W_{L_2}\sub \P V$ over $L_2\in \Pic^2(E)$
(see \eqref{W-L2-eq}).
Let us consider the vector bundle $\WW$ over $\wt{E}:=\Pic^2(E)$, such that the fiber of $\WW$ over $L_2$ is $W_{L_2}$.
Note that we have a natural identification $\wt{E}\simeq \Pic^3(E)\colon L_2\mapsto L_3:=\det(\VV)\ot L_2^{-1}$. In terms of $L_3$ we have~${W_{L_2}=H^0(L_3)^*\sub H^0(\VV)^*}$,
where we use a surjection $\VV\to L_3$. To define the vector bundle~$\WW$ precisely, we consider the universal line bundle $\LL_3$ of degree $3$ over
$E\times \wt{E}\simeq E\times \Pic^3(E)$, normalized so that the line bundle $p_{2*}\und{\Hom}(p_1^*\VV,\LL_3)$ is trivial.
We set $\WW:=p_{2*}(\LL_3)^\vee$.
Note that applying $p_{2*}$ to the natural surjection $p_1^*\VV\to \LL_3$ we get a surjection $H^0(\VV)\ot \OO\to p_{2*}(\LL_3)$.
Passing to the dual, we get a morphism $\P(\WW)\to \P V$, whose image is $\DD_E$.

\begin{Lemma} The morphism $\P(\WW)\to \DD_E$ is an isomorphism over $\DD_E\setminus S_E$.
\end{Lemma}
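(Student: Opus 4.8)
The plan is to deduce this from Lemma~\ref{inj-lem}, applied to the curve $C=\wt E$, the rank-$3$ bundle $\EE:=p_{2*}\LL_3$ on $\wt E$ (locally free since $h^0(L_3)=3$, $h^1(L_3)=0$ for every $L_3\in\Pic^3(E)$), the vector space $W:=H^0(\VV)$, and the linear map $W\to H^0(\wt E,\EE)$ coming from the surjection $H^0(\VV)\ot\OO\to p_{2*}\LL_3$. Here $\P(\EE^\vee)=\P(\WW)$, $\P(W^*)=\P V$, and the induced morphism is exactly the map $f\colon\P(\WW)\to\P V$ with image $\DD_E$. The pointwise surjectivity hypothesis holds: for each $L_3$ the fibre map $p_{L_3}\colon H^0(\VV)\to\EE|_{L_3}=H^0(L_3)$ is $H^0$ of the quotient $\VV\to L_3$, and it is surjective because $H^1(L_2)=0$. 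I would then take $Z:=f^{-1}(S_E)$, verify the two hypotheses of Lemma~\ref{inj-lem} for this $Z$, and conclude that $f$ restricts to a locally closed embedding on $\P(\WW)\setminus Z=f^{-1}(\P V\setminus S_E)$; since its image is $\DD_E\setminus S_E$, this is the desired isomorphism.

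Everything thus reduces to showing that the two degeneracy loci in Lemma~\ref{inj-lem} lie in $S_E$. For the first, fix $L_2\neq L_2'$. Unwinding the identification $W_{L_2}=H^0(L_2)^\perp=H^0(L_3)^*$, a point $\ell\in\P(\EE^\vee|_{L_3})=\P W_{L_2}$ orthogonal to $p_{L_3}(\ker p_{L_3'})=p_{L_3}(H^0(L_2'))$ is precisely a point of $\P W_{L_2}\cap\P W_{L_2'}$. Such a point $x$ lies in $S_E$: if not, then $x\in\P W_{L_2}\setminus E_{L_2}$, where $E_{L_2}:=S_E\cap\P W_{L_2}$ is the cubic image of $E$ under $|L_3|$, and by the proof of Lemma~\ref{Pi-rank-lem}(iii) the quotient $M:=\wt\VV/L_2$ is a \emph{stable} bundle of rank $2$ and degree $3$, hence of slope $3/2$. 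But composing the second splitting $L_2'\to\wt\VV$ with $\wt\VV\to M$ gives a nonzero map — its image is nonzero since the lifts of $L_2$ and $L_2'$ to $\wt\VV$ intersect in $0$, because the distinct saturated line subbundles $L_2,L_2'\sub\VV$ satisfy $L_2\cap L_2'=0$ — i.e.\ a nonzero map from the degree-$2$ bundle $L_2'$ into the stable bundle $M$ of slope $3/2$, which is impossible. Hence $x\in E_{L_2}\sub S_E$.

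The main work is the second hypothesis, an infinitesimal version of the same phenomenon. Parametrizing $\wt E$ by the sub-line-bundle $L_2\sub\VV$, one has $T_{L_3}\wt E\simeq\Hom(L_2,L_3)=H^0(L_3\ot L_2^{-1})$, and I would identify the Kodaira--Spencer derivative
\[
K_{L_3}\ot T_{L_3}\wt E=H^0(L_2)\ot H^0\big(L_3\ot L_2^{-1}\big)\ \longrightarrow\ H^0(L_3)
\]
with the multiplication map $s\ot\eta\mapsto\eta\cdot s$. Since $L_3\ot L_2^{-1}$ has degree $1$, its unique section $t$ vanishes at a single point $p_0$, so $L_3\ot L_2^{-1}\simeq\OO(p_0)$ and the image of the multiplication map is $t\cdot H^0(L_2)=H^0(L_3(-p_0))\sub H^0(L_3)$, a hyperplane. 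Its orthogonal complement in $H^0(L_3)^*$ is therefore the line spanned by the evaluation functional $\mathrm{ev}_{p_0}$, i.e.\ the single point of $\P W_{L_2}$ lying on the cubic $E_{L_2}$. Thus this degeneracy point again lies in $E_{L_2}\sub S_E$.

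With both hypotheses verified for $Z=f^{-1}(S_E)$, Lemma~\ref{inj-lem} gives that $f^{-1}(\DD_E\setminus S_E)\to\DD_E\setminus S_E$ is a locally closed embedding; being surjective onto its (reduced) image, it is an isomorphism, as claimed. The crux I expect is the second hypothesis: identifying the infinitesimal map with the multiplication $H^0(L_2)\ot H^0(L_3\ot L_2^{-1})\to H^0(L_3)$ and computing its image as $H^0(L_3(-p_0))$; the remainder is bookkeeping built on Lemmas~\ref{inj-lem} and~\ref{Pi-rank-lem}.
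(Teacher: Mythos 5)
Your proposal is correct and follows the paper's skeleton exactly --- the same reduction via Lemma~\ref{inj-lem} applied to $H^0(\VV)\ot\OO\to\WW^\vee$ over $\wt{E}$ with $Z=f^{-1}(S_E)$ --- but you verify the two hypotheses by genuinely different arguments. For the first hypothesis the paper computes directly: the nonzero composition $L_2'\to\VV\to L_3$ identifies $L_2'$ with $L_3(-p)$, so the image of $H^0(L_2')$ in $H^0(L_3)$ is the hyperplane $H^0(L_3(-p))$ and the unique orthogonal point is $p\in E\sub Z$; you instead observe that the orthogonal points are exactly $\P W_{L_2}\cap\P W_{L_2'}$ and rule out points off $S_E$ by a slope contradiction, using that a nonzero map from the degree-$2$ line bundle $L_2'$ into the stable slope-$3/2$ quotient $\wt{\VV}/L_2$ (supplied by the proof of Lemma~\ref{Pi-rank-lem}\,(iii)) is impossible. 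Both are valid: the paper pinpoints the bad point, while your version explains geometrically why two distinct leaf-planes can only meet along $S_E$. For the second hypothesis both proofs end at the same place (the image of $K_x\ot T_x\wt{E}$ in $H^0(L_3)$ is the hyperplane $H^0(L_3(-p_0))$, whose orthogonal point is $p_0\in E\sub Z$), but the paper gets there by restricting the universal bundle $\LL_3$ to $E\times\{2x\}$, producing the extension $F_x$ and using nontriviality of the Kodaira--Spencer class to show $\a|_{L_2}\colon L_2\to T_x^*\wt{E}\ot L_3$ is nonzero, whereas you get there by identifying $T_{L_3}\wt{E}\simeq\Hom(L_2,L_3)$ (viewing $\wt{E}$ as the parameter space of quotients $\VV\to L_3$) and the derivative map with the multiplication $H^0(L_2)\ot H^0\big(L_3\ot L_2^{-1}\big)\to H^0(L_3)$. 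That identification --- which you rightly flag as the crux --- is correct and standard, and it plays precisely the role of the paper's nonvanishing of $\a|_{L_2}$; to certify it one can note that the connecting map $\Hom(L_2,L_3)\to\Ext^1(L_3,L_3)$ is an isomorphism (because $\Hom(\VV,L_3)$ is one-dimensional and $\Ext^1(\VV,L_3)=0$ by stability of $\VV$), so $\Pic^3(E)$ really is the space of quotients with the tangent space you use. As written this step is asserted rather than proved, which is the only gap, and it is a minor, fixable one.
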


\begin{proof} We need to check two conditions of Lemma \ref{inj-lem} for the morphism $H^0(\VV)\ot \OO\to \WW^\vee$ over $\wt{E}$, with $Z\sub \P(\WW)$
being the preimage of $S_E$. Note that the intersection of $Z$ with each plane $\P H^0(L_3)^*\sub H^0(\VV)^*$ is the elliptic curve $E$ embedded by the
linear system $|L_3|$.

To check the first condition, we use the exact sequence
\begin{gather*}0\to H^0(L_2)\to H^0(\VV)\to H^0(L_3)\to 0,\end{gather*}
where $L_2\ot L_3\simeq \det(\VV)$. If $L'_3$ is different from $L_3$ then
the composed map $L_2\to \VV\to L'_3$ is nonzero, hence, it identifies $L_2$ with the subsheaf $L'_3(-p)$ for some point $p\in E$. Hence,
the image of $H^0(L_2)$ is precisely the plane $H^0(L'_3(-p))\sub H^0(L'_3)$. Hence, the only point of~$\P H^0(L'_3)^*$ orthogonal to this plane
is the point $p\in E\sub \P H^0(L'_3)^*$, which lies in $Z$.

To check the second condition, we need to understand the map $H^0(\VV)\to H^0(\WW^\vee|_{2x})$ for~${x\in \wt{E}\simeq \Pic^3(E)}$. For this
we observe that this map is equal to the composition
\begin{gather*}H^0(\VV)\to H^0(E\times \{2x\},p_1^*\VV|_{E\times \{2x\}})\to H^0(E\times \{2x\},\LL_3|_{E\times \{2x\}}),\end{gather*}
which is the map induced on $H^0$ by the morphism of sheaves on $E$,
\begin{gather*}\a\colon\ \VV\to \VV\ot H^0(\OO_{2x})=p_{1*}(p_1^*\VV|_{E\times \{2x\}})\to p_{1*}(\LL_3|_{E\times \{2x\}}).\end{gather*}

Note that for $x=L_3$, the bundle $F_x:= p_{1*}(\LL_3|_{E\times \{2x\}})$ on $E$ is an extension of $L_3$ by $T_x^*\wt{E}\ot L_3$,
which gives the Kodaira--Spencer map for the family $\LL_3$, so this extension is nontrivial.
The composition
\begin{gather*}\VV\rTo{\a} F_x\to L_3\end{gather*}
is the canonical surjection with the kernel $L_2\sub \VV$.
Hence, $\a$ fits into a morphism of exact sequences
\[
\begin{diagram}
0&\rTo{}& L_2 &\rTo{}& \VV &\rTo{}& L_3 &\rTo{}& 0\\
&&\dTo{\a|_{L_2}}&&\dTo{\a}&&\dTo{\id}\\
0&\rTo{}& T_x^*\wt{E}\ot L_3 &\rTo{}& F_x &\rTo{}& L_3 &\rTo{}& 0.
\end{diagram}
\]
Note that the map $\a|_{L_2}$ is nonzero, since otherwise we would get a splitting of the extension~${F_x\to L_3}$.

Now the kernel of the map $H^0(\VV)\to \WW^\vee|_x=H^0(L_3)$ is identified with $H^0(L_2)$,
and the induced map $H^0(L_2)\to T_x^*\wt{E}\ot H^0(L_3)$ is given by a nonzero map
\begin{gather*}\a|_{L_2}\colon\ L_2\to T_x^*\wt{E}\ot L_3\simeq L_3.\end{gather*}
Hence, its image is the subspace of the form $H^0(L_3(-p))$, and we again deduce that any point of $\P H^0(L_3)^*$ orthogonal to it lies in $Z$.
\end{proof}

\begin{Corollary}\label{lines-cor}\quad
\begin{itemize}\itemsep=0pt
\item[$(i)$] There is a regular map $\DD_E\setminus S_E\to \wt{E}$ such that the fiber over $L_2$ is the symplectic leaf~${\P W_{L_2}\setminus E}$.
\item[$(ii)$] Any line contained in $\DD_E$ is either contained in $S_E$ {\rm (}and so has form $\P \La_p$ for some $p\in E)$ or in some plane $\P W_{L_2}$, where $L_2\in \Pic^2(E)$.
    \end{itemize}
\end{Corollary}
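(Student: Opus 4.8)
The plan is to obtain part~(i) directly from the preceding lemma, and to reduce part~(ii) to the fact that a rational curve admits no nonconstant morphism to an elliptic curve.

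For~(i), the preceding lemma shows that $\P(\WW)\to\DD_E$ restricts to an isomorphism over $\DD_E\setminus S_E$; let $Z\subset\P(\WW)$ denote the preimage of $S_E$. Since $\WW$ is a vector bundle over $\wt E$, the projective bundle $\P(\WW)$ carries its structure projection to $\wt E$, and I would define the map $\rho\colon\DD_E\setminus S_E\to\wt E$ as the composite of the inverse of this isomorphism $\DD_E\setminus S_E\simeq\P(\WW)\setminus Z$ with the projection $\P(\WW)\to\wt E$. The fiber of $\P(\WW)\to\wt E$ over a point $L_2\in\wt E\simeq\Pic^2(E)$ is the plane $\P W_{L_2}$, and the preceding lemma identifies $Z\cap\P W_{L_2}$ with the image of $E$ under the linear system $|L_3|$. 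Hence $\rho^{-1}(L_2)=\P W_{L_2}\setminus E$, which is exactly the symplectic leaf by Lemma~\ref{Pi-rank-lem}(iii).

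For~(ii), let $\ell\subset\DD_E$ be a line, and split into two cases. Suppose first that $\ell\subset S_E$. By Lemma~\ref{S-lem}, $S_E$ is identified with the ruled surface $\P\big(\VV^\vee\big)$ over $E$, and the structure projection $\P\big(\VV^\vee\big)\to E$ restricts on $\ell\simeq\P^1$ to a morphism $\P^1\to E$. As there is no nonconstant morphism from a rational curve to an elliptic curve, this restriction is constant, so $\ell$ is contained in a single fiber $\P\La_p$. Since $\ell$ and $\P\La_p$ are both irreducible of dimension one with $\ell\subset\P\La_p$, they coincide, giving $\ell=\P\La_p$.

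Suppose instead that $\ell\not\subset S_E$. Then $\ell\cap S_E$ is a proper closed, hence finite, subset of $\ell$, so $U:=\ell\setminus S_E$ is a dense open subset of $\ell$ contained in $\DD_E\setminus S_E$. Restricting $\rho$ produces a morphism $U\to\wt E$ from an open subset of $\P^1$ to the projective curve $\wt E$; by the valuative criterion it extends to a morphism $\P^1\to\wt E$, which is constant since $\wt E$ is an elliptic curve. Thus $\rho$ takes a single value $L_2$ on $U$, so $U\subset\rho^{-1}(L_2)=\P W_{L_2}\setminus E\subset\P W_{L_2}$; taking closures and using that $\P W_{L_2}$ is closed while $U$ is dense in $\ell$ yields $\ell\subset\P W_{L_2}$. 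The substantive content lies in part~(ii), and the only point requiring care is keeping the dichotomy $\ell\subset S_E$ versus $\ell\not\subset S_E$ clean: the first case uses the explicit ruled structure of $S_E$ from Lemma~\ref{S-lem}, and the second uses the globally defined classifying map $\rho$ from part~(i) together with the rigidity of maps to $\wt E$. I do not anticipate any obstacle beyond organizing these two cases.
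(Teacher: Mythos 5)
Your proposal is correct and follows essentially the same route as the paper: part~(i) is read off from the isomorphism $\P(\WW)\setminus Z\simeq \DD_E\setminus S_E$ of the preceding lemma composed with the bundle projection to $\wt{E}$, and part~(ii) is handled by the same dichotomy, using constancy of maps from (open subsets of) $\P^1$ to the elliptic curves $\wt{E}$ and $E$ to place the line in a plane $\P W_{L_2}$ or in a fiber $\P\La_p$ of $S_E\to E$. The only difference is that you spell out the extension and rigidity steps that the paper leaves implicit.
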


\begin{proof} For (ii) we observe that given a line $L\sub \DD_E$ not contained in $S_E$, the restriction of the map $\DD_E\setminus S\to \wt{E}$
to $L\setminus S_E\to \wt{E}$ is necessarily constant. Hence, $L$ is contained in some plane~$\P W_{L_2}$. Similarly, we have a fibration $S_E\to E$ with fibers $\P \La_p$,
so any line contained in~$S_E$ is one of the fibers.
\end{proof}

\subsection[Two-dimensional distribution on G(2,5) associated with the elliptic curve]{Two-dimensional distribution on $\boldsymbol{G(2,5)}$ associated \\ with the elliptic curve}\label{Td-distr-sec}

Let $E=E_W\sub G(2,V)$ be the elliptic curve obtained as the intersection with a linear subspace~\smash{${\P W\sub \P\big({\bigwedge}^2 V\big)}$} in the Pl\"ucker embedding,
where $\dim W=5$. Equivalently, $E$ is cut out by the linear subspace of sections $W^\perp\sub {\bigwedge}^2 V^*\simeq H^0(G(2,V),\OO(1))$.
As before, we denote by~$\VV$ the restriction of $\UU^\vee$, the dual of the universal bundle. Then ${\bigwedge}^2(\VV)$ is the restriction of $\OO(1)$, and we have
an exact sequence
\begin{gather*}0\to W^\perp \to {\bigwedge}^2 V^*\to H^0\left(E,{\bigwedge}^2(\VV)\right)\to 0.\end{gather*}
In other words, we can identify the dual map
to the embedding $W\hra {\bigwedge}^2 V$ with the natural map
\begin{gather*}{\bigwedge}^2 H^0(\VV)\to H^0\left({\bigwedge}^2\VV\right).\end{gather*}

We have a regular map $f\colon\ G(2,V)\setminus E \to \P^4$
given by the linear system $|W^\perp|\sub |\OO(1)|$.

\begin{Definition}
For every point $\La\in G(2,V)\setminus E$, we define the subspace
\begin{gather*}D_{\La}=D_{E,\La}\sub T_{\La}G(2,V)\end{gather*}
as the kernel of the tangent map to $f$ at $\La$.
\end{Definition}

Note that for generic $\La$, one has $\dim D_{\La}=2$.
We have the following characterization of $D_{\La}$.

\begin{Lemma}\label{Dp-lem}
Let $\La\sub V$ be a
$2$-dimensional subspace corresponding to a point of $G(2,V)\setminus E$.

\begin{itemize}
\item[$(i)$] Under the identification
$T_{\La}G(2,V)\ot \det(\La)\simeq \La\ot V/\La$, we have
\begin{gather*}D_{\La}\ot \det(\La)=W\cap (\La\we V)=W\cap (\La\ot V/\La),\end{gather*}
where the second intersection is taken in ${\bigwedge}^2V/{\bigwedge}^2\La$.

\item[$(ii)$] For each $v\in \La$, let us denote by $\pi_v\colon T_{\La}G(2,V)\to V/\La$ the natural projection.
Assume that $\Pi_{E,v}$ has rank $4$, for some nonzero $v\in \La$. Then $D_{\La}$ is $2$-dimensional, and $\pi_v(D_{\La})$ is the
$2$-dimensional subspace of $V/\La$ given as follows:
\begin{gather*}\pi_v(D_{\La})=\big\{x\in V/\La \mid x\we \Pi_{E,v}^{\rm norm}=0\big\},\end{gather*}
where $\Pi_{E,v}^{\rm norm}\in {\bigwedge}^2(V/\La)$ is the image of $\Pi_{E,v}\in {\bigwedge}^2(V/v)$.
\end{itemize}
\end{Lemma}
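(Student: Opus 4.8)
The plan is to prove the two parts separately, grounding everything in the identification of $D_\La$ as the kernel of the tangent map to $f$. First I would recall that $f$ is the composition of the Pl\"ucker embedding $G(2,V)\hra \P(\bigwedge^2 V)$ with the linear projection $\P(\bigwedge^2 V)\dashrightarrow \P(\bigwedge^2 V/W)$ away from $\P W$, so $D_\La$ is the preimage in $T_\La G(2,V)$ of the intersection of $W$ with the affine tangent cone to the Grassmannian at $\La$.

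\textbf{Part (i).}
For part (i), the key is the well-known description of the tangent space to $G(2,V)$ inside $\bigwedge^2 V$ under the Pl\"ucker embedding. If $\La=\linspan(e_1,e_2)$, then the affine cone over $G(2,V)$ has, at the point $e_1\we e_2$, affine tangent space equal to $\La\we V=e_1\we V+e_2\we V\sub \bigwedge^2 V$, which is exactly the set of decomposable-to-first-order elements. First I would make precise the canonical isomorphism $T_\La G(2,V)\ot \det(\La)\simeq \La\ot V/\La$: the tangent space to $G(2,V)$ at $\La$ is $\Hom(\La,V/\La)\simeq \La^*\ot V/\La$, and tensoring with $\det(\La)\simeq \La^*\we \La$ trivializes the $\La^*$ factor against one copy of $\La$, yielding $\La\ot V/\La$. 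Under this identification the affine tangent cone maps isomorphically onto $(\La\we V)/\bigwedge^2\La\simeq \La\ot V/\La$. Since $\ker(df_\La)$ is cut out by the linear conditions defining $W$ (the projection kills exactly the $W$-directions), we get $D_\La\ot \det(\La)=W\cap(\La\we V)$ inside $\bigwedge^2 V$; projecting modulo $\bigwedge^2\La$ gives the asserted equality $W\cap(\La\ot V/\La)$ in $\bigwedge^2 V/\bigwedge^2\La$. I would note that $\bigwedge^2\La\sub W$ is automatic when the relevant intersection is taken correctly, so the two formulations agree.

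\textbf{Part (ii) and the main obstacle.}
For part (ii), I would use the rank-stratification results of Lemma~\ref{Pi-rank-lem} together with the conormal Lie algebra description of Theorem~\ref{conormal-thm}. The hypothesis that $\Pi_{E,v}$ has rank $4$ at the point $v\in\La$ means that the extension $\wt\VV$ corresponding to $v$ is stable (rank $\le 2$ is excluded), so $\dim\End(\wt\VV)=1$ and $v$ is a generic point of the Poisson structure; this is what forces $D_\La$ to be exactly $2$-dimensional rather than larger. The substance is to match the two-dimensional subspace $\pi_v(D_\La)\sub V/\La$ with the annihilator of $\Pi_{E,v}^{\mathrm{norm}}\in\bigwedge^2(V/\La)$. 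The idea is that the projection $\pi_v$ records, in the Pl\"ucker picture, the component $v\we(V/\La)$ of the tangent directions, and the bivector $\Pi_{E,v}$ is precisely the image of $\la_W$ under $\pi_{5,2}$ by Theorem~\ref{TheoremA}; restricting the defining equation $\pi_{5,2}(\la_W)=\Pi_W\ot\de$ to the point $v$ and projecting to $\bigwedge^2(V/\La)$ should identify $W\cap(v\we V)$, read through $\pi_v$, with $\{x: x\we\Pi_{E,v}^{\mathrm{norm}}=0\}$. I expect the main obstacle to be this last identification: carefully tracking how the exterior-algebra formula for $\pi_{5,2}$ localizes at $v$, and verifying that the quadratic (Pl\"ucker) condition $x\we\Pi_{E,v}^{\mathrm{norm}}=0$ in $\bigwedge^3(V/\La)$ is equivalent to membership of the corresponding lift in $W\cap(\La\we V)$ modulo the $e_2$-direction already accounted for by $\La$. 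The rank-$4$ hypothesis is exactly what guarantees $\Pi_{E,v}^{\mathrm{norm}}$ is a nonzero decomposable-free bivector whose annihilator in the three-dimensional space $V/\La$ is genuinely $2$-dimensional, closing the dimension count.
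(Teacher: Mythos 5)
Your part (i) is essentially the paper's argument and is correct, except that your closing parenthetical is backwards: $\bigwedge^2\La\sub W$ is never true here, since it would say exactly that $\La\in \P W\cap G(2,V)=E$, which the hypothesis excludes. What actually makes the two formulations of the intersection agree is the opposite fact: because $\La\notin E$ one has $W\cap \bigwedge^2\La=0$, so the projection $\bigwedge^2 V\to \bigwedge^2 V/\bigwedge^2\La$ is injective on $W$ and carries $W\cap(\La\we V)$ isomorphically onto $W\cap(\La\ot V/\La)$.

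In part (ii) there is a genuine gap: the step you defer as ``the main obstacle'' is the whole content of the statement, and the surrounding claims you do make are either unsupported or wrong. The paper's argument, which your plan would have to reproduce, runs as follows: (a)~Theorem~\ref{TheoremA} implies that for $\lan v\ran\notin S_E$ the map $\phi_v\colon W\to \bigwedge^2(V/v)$ is injective with image exactly $\lan \Pi_{E,v}\ran^\perp$ for the wedge pairing $\bigwedge^2(V/v)\ot \bigwedge^2(V/v)\to \det(V/v)$; injectivity uses that a kernel element is a decomposable element $v\we v'\in W$, hence of the form $\bigwedge^2\La_p$ with $p\in E$, which is impossible since $\Pi_{E,v}\neq 0$ forces $\lan v\ran\notin S_E$. (b)~Rank $4$ means the form $(x_1,x_2)\mapsto x_1\we x_2\we \Pi_{E,v}$ on $V/v$ is nondegenerate, so $I:=\big((\La/\lan v\ran)\ot V/\La\big)\cap \lan\Pi_{E,v}\ran^\perp$ is exactly $2$-dimensional. (c)~$\phi_v(W\cap(\La\we V))\sub I$ gives $\dim D_\La\le 2$; since $D_\La$ is the kernel of a linear map from a $6$-dimensional space to a $4$-dimensional one, $\dim D_\La\ge 2$, whence equality throughout, and $I$, after trivializing $\La/\lan v\ran$, is precisely $\big\{x\in V/\La \mid x\we \Pi_{E,v}^{\rm norm}=0\big\}$. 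None of (a)--(c) appears in your proposal. Moreover, three of your specific assertions fail: the claim that stability of $\wt{\VV}$ ``forces'' $\dim D_\La=2$ has no argument behind it (the $2$-dimensionality comes from (b) and (c), not from simplicity of the extension); the object ``$W\cap(v\we V)$, read through $\pi_v$'' is the wrong one, since elements of $W\cap(v\we V)$ correspond to homomorphisms $A$ with $A(v)=0$ and hence are killed by $\pi_v$ --- what is needed is the image of all of $W\cap(\La\we V)$ modulo the $v\we V$-directions; and ``nonzero decomposable-free bivector'' is meaningless in $\bigwedge^2(V/\La)$ with $\dim(V/\La)=3$, where every bivector is decomposable --- the relevant point is only that $\Pi_{E,v}^{\rm norm}\neq 0$, which holds because otherwise $\Pi_{E,v}$ would lie in $(\La/\lan v\ran)\we(V/v)$ and thus be decomposable, contradicting rank~$4$.
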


\begin{proof}
(i) The map $f$ is the composition of the Pl\"ucker embedding $G(2,V)\to \P\big({\bigwedge}^2 V\big)$ with the linear projection
\begin{gather*}\P \left({\bigwedge}^2 V\right)\setminus \P(W)\to \P\left({\bigwedge}^2 V/W\right).\end{gather*}
Thus, the tangent map to $f$ at $\La\sub W$ is the composition
\begin{gather*}\Hom(\La,V/\La)\rTo{\a} \Hom\left({\bigwedge}^2 \La, {\bigwedge}^2 V/{\bigwedge}^2 \La\right)\to \Hom\left({\bigwedge}^2 \La, {\bigwedge}^2 V/\left({\bigwedge}^2 \La+W\right)\right),\end{gather*}
where $\a(A)(l_1\we l_2)=Al_1\we l_2+l_1\we Al_2 \mod {\bigwedge}^2 \La$. Equivalently, the map $\a$ is the natural map
\begin{gather*}\Hom(\La,V/\La)\simeq \La^*\ot V/\La\simeq {\det}^{-1}(\La)\ot \La\ot V/\La\to {\det}^{-1}(\La)\ot {\bigwedge}^2 V/{\bigwedge}^2 \La,\end{gather*}
given by $l\ot (v\mod \La)\mapsto l\we v \mod {\bigwedge}^2 \La$.

Now the assertion follows from the identification
\begin{gather*}W=\ker\left({\bigwedge}^2 V/{\bigwedge}^2 \La\to {\bigwedge}^2 V/\left({\bigwedge}^2 \La+W\right)\right).\end{gather*}

(ii) Our identification of $\Pi_W$ from Theorem \ref{TheoremA} implies the following property of the bivector~${\Pi_{W,v}\in {\bigwedge}^2(V/v)}$.
Consider the natural map $\phi_v\colon W\to {\bigwedge}^2(V/v)$.
Recall that $S=S_E\sub \P V$ denotes the surface, obtained as the union of lines corresponding to $E\sub G(2,V)$.
We claim that the map~$\phi_v$ is injective if and only if $\lan v\ran$ is not in $S$. Indeed, an element in the kernel of~$\phi_v$ is an element~$v\we v'$ contained in~$W$,
so the plane $\lan v,v'\ran$ corresponds to a point of $E$. Hence, this is true when $\Pi_{W,v}$ is nonzero.

Now assume the rank of $\Pi_{W,v}$ is $4$.
We have a nondegenerate symmetric pairing on ${\bigwedge}^2(V/v)$ with values
in $\det(V/v)$, given by the exterior product. Now our description of $\Pi_W$ implies that for $\lan v\ran\not\in S$, $\Pi_{W,v}$ is nonzero and
\begin{gather*}\phi_v(W)=\lan\Pi_{W,v}\ran^\perp.\end{gather*}

Since $\Pi_{W,v}$ has maximal rank, the skew-symmetric form $(x_1,x_2)=x_1\we x_2\we\Pi_{W,v}$ on $V/v$ is nondegenerate.
Hence, the subspace $(\La/\lan v\ran)\ot (V/\La)$ cannot be contained in $\lan\Pi_{W,v}\ran^\perp$ (this would mean that
$\La/\lan v\ran$ lies in the kernel of $(\cdot,\cdot)$).
Hence, the intersection
\begin{gather*}I:=(\La/\lan v\ran)\ot (V/\La)\cap \lan\Pi_{W,v}\ran^\perp\end{gather*}
is $2$-dimensional.
Since the subspace $\phi_v(W\cap (\La\we V))$ is contained in $I$, we deduce that its dimension is $\le 2$, and so $\dim D_{\La}\le 2$.
But we also know that $\dim D_{\La}\ge 2$, hence in fact, we have $\dim D_{\La}=2$ and $\phi_v(W\cap (\La\we V))=I$.

The last assertion follows from the fact that under trivialization of $\La/\lan v\ran$, the subspace $I\sub V/\La$ coincides with $\pi_v(D_{\La})$.
\end{proof}

\begin{Definition}
We define $\Si_E\sub G(2,V)$ as the closed locus of points $\La\in G(2,V)$ such that~${\dim W\cap (\La\wedge V)\ge 3}$.
\end{Definition}

\begin{Lemma}\label{E-SiE-lem}
One has $\Si_E\sub G(2,V)\setminus E$.
\end{Lemma}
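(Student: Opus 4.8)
The plan is to prove the stronger statement that for every $\La\in E$ one has $\dim\bigl(W\cap(\La\we V)\bigr)=2$; since $2<3$, this shows that no point of $E$ lies in $\Si_E$. The mechanism behind the equality is that for $\La\in E$ the decomposable vector $\om_\La$ spanning ${\bigwedge}^2\La$ lies in $W$, whereas for $\La\notin E$ it does not. Concretely, ${\bigwedge}^2\La\sub\La\we V$ and $W\cap{\bigwedge}^2\La=\lan\om_\La\ran$ is one-dimensional, and the projection ${\bigwedge}^2V\to{\bigwedge}^2V/{\bigwedge}^2\La$ carries $W\cap(\La\we V)$ onto $\ov W\cap\ov{\La\we V}$ (this is the intersection $W\cap(\La\ot V/\La)$ of Lemma~\ref{Dp-lem}) with kernel exactly $\lan\om_\La\ran$; the surjectivity here is a direct check. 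Hence it suffices to prove that $\ov W\cap\ov{\La\we V}$ is one-dimensional, after which $\dim\bigl(W\cap(\La\we V)\bigr)=1+1=2$.

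The key step is to identify $\ov W\cap\ov{\La\we V}$ with the tangent space to $E$ at $\La$ and then invoke smoothness. Recall from the proof of Theorem~\ref{TheoremA} that $E=E_W$ is the zero locus of the regular section $F\in H^0\bigl(G(2,V),N\bigr)$, where $N=R^*\ot\OO(1)$ and $R=W^\perp\hra{\bigwedge}^2V^*=H^0(\OO(1))$, with $F$ induced by this inclusion. Because $E$ is a smooth curve, the differential $dF_\La\colon T_\La G(2,V)\to N|_\La$ is surjective at every $\La\in E$, so $\ker(dF_\La)=T_\La E$ has dimension $6-5=1$. Using the identification $T_\La G(2,V)\ot\det(\La)\simeq\La\ot V/\La$ together with the map $\a\colon \La\ot V/\La\to{\bigwedge}^2V/{\bigwedge}^2\La$ of Lemma~\ref{Dp-lem} (the line twist by $\det(\La)$ playing no role in what follows), one finds that $dF_\La$ is given by $A\mapsto\bigl(r\mapsto\lan r,\a(A)\ran\bigr)$; here the pairing descends to the quotient precisely because every $r\in W^\perp$ annihilates ${\bigwedge}^2\La=\lan\om_\La\ran$ when $\La\in E$. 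Consequently $\ker(dF_\La)$ is carried by $\a$ isomorphically onto $\ov W\cap\ov{\La\we V}$, which is therefore one-dimensional, completing the computation.

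The main point that needs care is the transversality input: one must know that smoothness of the curve $E$ genuinely yields surjectivity of $dF_\La$, rather than merely a bound on a reduced tangent space (the failure of this for non-transverse linear sections is what would allow the dimension to jump). This is exactly where it matters that $E$ is presented as the zero scheme of a \emph{regular} section $F$ of the rank-$5$ bundle $N$ on the six-dimensional $G(2,V)$: such a zero scheme is a local complete intersection of pure dimension $1$ with no embedded points, and it is smooth at $\La$ if and only if $dF_\La$ is surjective. Granting that $E_W$ is a smooth elliptic curve in this scheme-theoretic sense, surjectivity holds along all of $E$ and the argument goes through. The only remaining bookkeeping is the elementary identity $W\cap{\bigwedge}^2\La=\lan\om_\La\ran$ for $\La\in E$, which is what produces the extra $+1$ distinguishing $\dim\bigl(W\cap(\La\we V)\bigr)=2$ from the one-dimensional intersection in the quotient.
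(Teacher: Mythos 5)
Your proof is correct, and it takes a genuinely different route from the paper's. The paper works on the elliptic-curve side: it identifies $W\cap(\La_p\we V)$ with the kernel of $W\to{\bigwedge}^2(V/\La_p)$, dualizes this to the multiplication map ${\bigwedge}^2 H^0(\VV(-p))\to H^0(\det\VV)$, which factors through $H^0((\det\VV)(-2p))$, and reduces the desired corank-$2$ statement to surjectivity of ${\bigwedge}^2 H^0(\VV')\to H^0(\det\VV')$ for the stable rank-$2$, degree-$3$ bundle $\VV'=\VV(-p)$, proved by varying an auxiliary point $p'\in E$ and using the subsheaves $\OO(p')\sub\VV'$. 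You instead stay on the Grassmannian side: you split off the Pl\"ucker line $\lan\om_\La\ran$ from $W\cap(\La\we V)$ and identify the quotient with $\ker(dF_\La)=T_\La E$, so everything reduces to the Jacobian criterion applied to the regular section $F$ of $N=R^*\ot\OO(1)$ cutting out $E$; your computation of $dF_\La$ via the Pl\"ucker embedding and the descent of $W^\perp$ to ${\bigwedge}^2V/{\bigwedge}^2\La$ is accurate, and the surjectivity of the projection $W\cap(\La\we V)\to \ov{W}\cap(\La\ot V/\La)$ indeed follows directly from $\om_\La\in W$. Your approach is more elementary (no facts about stable bundles on elliptic curves are needed), applies verbatim to any smooth linear section, and gives a pleasant interpretation of the $2$-dimensional intersection as ``Pl\"ucker point plus tangent direction of $E$''; your caveat about scheme-theoretic smoothness is consistent with the paper's conventions, since the proof of Theorem~\ref{TheoremA} already presents $E_W$ as the smooth zero locus of the regular section $F$. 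What the paper's computation buys in return is the explicit dual description of the annihilator of $W\cap(\La_p\we V)$ as $H^0((\det\VV)(-2p))\sub H^0(\det\VV)$, which fits the identifications $V=H^0(\VV)^*$ and $V/\La_p\simeq H^0(\VV(-p))^*$ used throughout the rest of the paper; both arguments in fact show that the intersection has dimension exactly $2$, not merely $\le 2$.
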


\begin{proof}
We have to prove that $\dim W\cap (\La_p\wedge V)\le 2$,
where $\La_p=H^0(\VV|_p)^*\sub H^0(\VV)^*=V$ for some $p\in E$.
We have, $\La_p^{\perp}=H^0(\VV(-p))\sub H^0(\VV)$ and so, $V/\La_p\simeq H^0(\VV(-p))^*$.

The intersection $W\cap (\La_p\wedge V)$ is the kernel of the composed map
\begin{gather*}W\hra {\bigwedge}^2 V\to {\bigwedge}^2(V/\La_p).\end{gather*}
The dual map can be identified with the composition
\begin{gather*}{\bigwedge}^2 H^0(\VV(-p))\to {\bigwedge}^2 H^0(\VV)\to H^0(\det \VV),\end{gather*}
which also factors as the composition
\begin{gather*}{\bigwedge}^2 H^0(\VV(-p))\to H^0\left({\bigwedge}^2(\VV(-p))\right)=H^0((\det \VV)(-2p))\sub H^0(\det \VV).\end{gather*}
We need to check that this map has corank $2$, or equivalently the first arrow is an isomorphism.

Set $\VV'=\VV(-p)$. This is a stable bundle of rank $2$ and degree $3$. We need to check that the map
\begin{gather*}{\bigwedge}^2 H^0(\VV')\to H^0(\det \VV')\end{gather*}
is surjective. For any point $p'\in E$, we have an exact sequence
\begin{gather*}0\to H^0(\OO(p'))\to H^0(\VV')\to H^0((\det \VV')(-p'))\to 0\end{gather*}
and it is easy to see that the restriction of the above map to $H^0(\OO(p'))\we H^0(\VV')$ surjects onto the subspace
$H^0((\det \VV')(-p'))\sub H^0(\det \VV')$. Varying the point $p'\in E$, we get the needed surjectivity.
\end{proof}

Thus, by Lemma \ref{Dp-lem}\,(i), $\Si_E$ is exactly the set of points $\La\in G(2,V)\setminus E$ where $\dim D_{\La}\ge 3$.
We have the following geometric description of $\Si_E$. Recall that we have a collection of $3$-dimensional subspaces $W_q\sub V$,
associated with points of $\wt{E}=\Pic^2(E)$ (see \eqref{W-L2-eq}).

\begin{Proposition}\label{Si-E-prop}
For $\La\in G(2,V)$, we have $\La\in \Si_E$ if and only if the corresponding line $\P \La$ is contained in some plane $\P W_q$, where $q\in \wt{E}$.
In other words, $\Si_E=\bigcup_{q\in \wt{E}}G(2,W_q)$.
\end{Proposition}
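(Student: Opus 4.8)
The plan is to prove the two set-theoretic inclusions making up $\Si_E=\bigcup_{q\in\wt E}G(2,W_q)$, using the reformulation noted just before the statement: by Lemma \ref{Dp-lem}(i), $\La\in\Si_E$ means exactly that the natural map $\phi_\La\colon W\to\bigwedge^2(V/\La)$ has $\dim\big(W\cap(\La\we V)\big)=\dim\ker\phi_\La\ge3$, and since the target is $3$-dimensional this is equivalent to $\rk\phi_\La\le2$, i.e.\ to $\phi_\La$ being non-surjective.

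For the inclusion $\bigcup_q G(2,W_q)\sub\Si_E$ (the ``if'' direction), suppose $\La\sub W_q$ with $q=L_2\in\wt E$. The key point I would establish is the containment $W\sub W_q\we V$ inside $\bigwedge^2V$. Dualizing the embedding $W\hra\bigwedge^2V$ via the identification of Section \ref{Td-distr-sec}, one has $R=W^\perp=\ker\big(\bigwedge^2 H^0(\VV)\to H^0(\det\VV)\big)$, so that $W\sub W_q\we V$ is equivalent to $\bigwedge^2 W_q^\perp\sub R$, i.e.\ to $\bigwedge^2 H^0(L_2)\sub R$ (using $W_q^\perp=H^0(L_2)\sub H^0(\VV)=V^*$). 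This last containment is immediate: for $s,t\in H^0(L_2)$ the section $s\we t\in H^0(\det\VV)$ vanishes pointwise, since $s(p),t(p)$ both lie in the line $L_2|_p\sub\VV|_p$. Granting $W\sub W_q\we V$, the map $\phi_\La$ has image inside that of $W_q\we V$ in $\bigwedge^2(V/\La)$, namely $(W_q/\La)\we(V/\La)$; as $\La\sub W_q$ makes $W_q/\La$ a line, this image is only $2$-dimensional, so $\rk\phi_\La\le2$ and hence $\La\in\Si_E$.

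For the inclusion $\Si_E\sub\bigcup_q G(2,W_q)$ (the ``only if'' direction), I would push a point of $\Si_E$ into the degeneracy locus and then invoke the classification of lines in $\DD_E$. Let $\La\in\Si_E$; by Lemma \ref{E-SiE-lem} we have $\La\notin E$, so $\dim D_\La\ge3$ by Lemma \ref{Dp-lem}(i). The contrapositive of Lemma \ref{Dp-lem}(ii) then rules out any nonzero $v\in\La$ with $\rk\Pi_{E,v}=4$; since the rank of a bivector is even and at most $4$, every $\langle v\rangle\in\P\La$ satisfies $\rk\Pi_{E,v}\le2$, i.e.\ $\P\La\sub\DD_E$. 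By Corollary \ref{lines-cor}(ii) this line is either contained in $S_E$, forcing $\La=\La_p\in E$ and contradicting $\La\notin E$, or contained in some plane $\P W_{L_2}$, which gives precisely $\La\sub W_q$ with $q=L_2$.

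The main obstacle, and the only genuinely new computation, is the ``if'' direction, specifically the identity $W\sub W_q\we V$. Everything there hinges on dualizing correctly: recognizing $R=W^\perp$ as the kernel of the multiplication $\bigwedge^2 H^0(\VV)\to H^0(\det\VV)$ and matching $W_q^\perp$ with $H^0(L_2)$, after which the vanishing of $s\we t$ for sections of the line subbundle $L_2$ closes the argument. By contrast, the ``only if'' direction is an assembly of the already-established Lemmas \ref{Dp-lem} and \ref{E-SiE-lem} and Corollary \ref{lines-cor}, with the only subtlety being the parity-and-range step that upgrades ``$\dim D_\La\ge3$'' to ``$\P\La\sub\DD_E$''.
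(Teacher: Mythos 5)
Your proof is correct, and its two halves compare differently with the paper's. The ``only if'' half coincides with the paper's own argument: from $\dim D_\La\ge 3$ one excludes rank $4$ for $\Pi_E$ at all points of $\P\La$ via the contrapositive of Lemma~\ref{Dp-lem}(ii), concludes $\P\La\sub\DD_E$, and applies Corollary~\ref{lines-cor}(ii), with Lemma~\ref{E-SiE-lem} ruling out the case $\P\La\sub S_E$; you merely make explicit the parity-of-rank step and the exclusion of the $S_E$ alternative, both of which the paper leaves implicit. The ``if'' half, however, takes a genuinely different route. The paper fixes $\La\sub W_q$, writes $\La=\lan s\ran^\perp$ for a section $s$ of the degree-$3$ quotient line bundle $M=\VV/L_2$, sets $P=\La^\perp\sub H^0(\VV)$, and bounds the rank of the dual map $\bigwedge^2 P\to H^0(\det\VV)$ by showing its image lies in $H^0\big((\det\VV)(-B)\big)$, where $B$ is the zero divisor of $s$; the corank bound $\ge 3$ then rests on the cohomological count $h^0\big((\det\VV)(-B)\big)=2$. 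You instead prove the single, $\La$-independent containment $W\sub W_q\we V$, by dualizing it to $\bigwedge^2 H^0(L_2)\sub W^\perp$ (using the identification of $W^\perp$ with the kernel of $\bigwedge^2 H^0(\VV)\to H^0(\det\VV)$ from Section~\ref{Td-distr-sec}) and invoking the pointwise vanishing of $s\we t$ for sections $s,t$ of the line subbundle $L_2\sub\VV$; for any $\La\sub W_q$ the image of $W$ in $\bigwedge^2(V/\La)$ then lies in the $2$-dimensional subspace $(W_q/\La)\we(V/\La)$, by linear algebra alone. The core geometric input is the same in both proofs --- in the paper it is hidden in the factorization of the multiplication map through $H^0(L_2)\ot H^0(M)$, i.e., in $\bigwedge^2 H^0(L_2)$ mapping to zero --- but your packaging is cleaner and more uniform: it handles all of $G(2,W_q)$ at once, needs no choice of $s$ and no Riemann--Roch count, and isolates the hyperplane $W_q\we V\sub\bigwedge^2 V$ as the structural reason for the degeneration. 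What the paper's computation buys in exchange is slightly finer information, namely that the image of $\bigwedge^2 P$ lands inside $H^0\big((\det\VV)(-B)\big)$, not just that it is at most $2$-dimensional.
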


\begin{proof} Assume first that $\La\in \Si_E$. As we have seen above, this means that $\La\in G(2,V)\setminus E$ and $\dim D_{\La}\ge 3$.
By Lemma \ref{Dp-lem}\,(ii), this implies that the rank of the Poisson bracket $\Pi_W$ on points of $\P \La$
is $\le 2$. Hence, by Lemma \ref{Pi-rank-lem}\,(ii), $\P \La$ is contained in the quintic $\DD_E$. By Corollary~\ref{lines-cor}, this implies that $\P \La$ is contained in some plane $\P W_q$.

Conversely, assume that we have a $2$-dimensional subspace $\La\sub H^0(M)^*\sub H^0(\VV)^*=V$, where $\VV\to M$ is a surjection to a degree $3$ line bundle $M$.
Then $\La=\lan s\ran^\perp\sub H^0(M)^*$ for some $1$-dimensional subspace $\lan s\ran\sub H^0(M)$.
Set $P=\La^\perp\sub H^0(\VV)$. Then $P$ is the preimage of~${\lan s\ran\sub H^0(M)}$ under the projection $H^0(\VV)\to H^0(M)$.

By Lemma \ref{Dp-lem}, the space $D_{\La}$ is isomorphic to the kernel of the composed map
\begin{gather*}W\to {\bigwedge}^2 V\to {\bigwedge}^2(V/\La).\end{gather*}
Hence, $\dim(D_{\La})$ is equal to the corank of the dual map
\begin{equation}\label{wedge2-P-map}
{\bigwedge}^2(P)\to {\bigwedge}^2 H^0(\VV)\to H^0\left({\bigwedge}^2\VV\right).
\end{equation}

Let $B$ denote the divisor of zeroes of $s$. We claim that the image of \eqref{wedge2-P-map} is contained in the subspace
$H^0\big({\bigwedge}^2\VV(-B)\big)\sub H^0\big({\bigwedge}^2\VV\big)$. Indeed, we have an exact sequence
\begin{gather*}0\to N\to \VV\to M\to 0,\end{gather*}
where $N$ is a line bundle of degree $2$. It is easy to see that the composed map
\begin{gather*}H^0(N)\we H^0(\VV)\hra {\bigwedge}^2 H^0(\VV)\to H^0\left({\bigwedge}^2\VV\right)\end{gather*}
coincides with the natural multiplication map
\begin{gather*}H^0(N)\we H^0(\VV)/{\bigwedge}^2H^0(N)\simeq H^0(N)\ot H^0(M)\to H^0(N\ot M)\simeq H^0\left({\bigwedge}^2\VV\right).\end{gather*}
The exact sequence
\begin{gather*}0\to H^0(N)\to P\to \lan s \ran\to 0\end{gather*}
shows that ${\bigwedge}^2 P\sub H^0(N)\we H^0(\VV)$ and its image in $H^0(N)\ot H^0(M)$ is contained in $H^0(N)\ot \lan s\ran$.
This proves our claim about the image of the map \eqref{wedge2-P-map}.
It follows that the corank of this map is $\ge 3$, so $\La\in \Si_E$.
\end{proof}

\begin{Corollary}\label{lines-Si-cor}
The locus of lines in $\P^4$ contained in the degeneration locus $\DD_E$ of $\Pi_E$ corresponds to the union $E\sqcup \Si_E\sub G(2,V)$.
\end{Corollary}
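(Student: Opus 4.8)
The plan is to read the statement off directly from Corollary~\ref{lines-cor} and Proposition~\ref{Si-E-prop}, both of which have already done the substantive work. Recall that a point $\La\in G(2,V)$ is nothing but a line $\P\La\sub \P V=\P^4$, so ``the locus of lines contained in $\DD_E$'' is literally a subset of $G(2,V)$, and the task is to identify this subset with $E\sqcup\Si_E$.

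First I would dispose of the easy inclusion: every point of $E\sqcup\Si_E$ corresponds to a line contained in $\DD_E$. For $\La\in\Si_E$, Proposition~\ref{Si-E-prop} gives $\P\La\sub\P W_q$ for some $q\in\wt E$, and since $\DD_E=\bigcup_{q\in\wt E}\P W_q$ by Lemma~\ref{Pi-rank-lem}(ii), this yields $\P\La\sub\DD_E$. A point $p\in E\sub G(2,V)$ is itself a $2$-dimensional subspace $\La\sub V$ with $\VV=\UU^\vee$ restricting to $\La^*$ on its fibre, so $\La_p=\VV|_p^*=\La$; thus the inclusion $E\hra G(2,V)$ coincides with the base of the ruling $S_E\to E$, and the associated line satisfies $\P\La_p\sub S_E\sub\DD_E$ since $\Pi$ vanishes on $S_E$. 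Disjointness of the two families is precisely Lemma~\ref{E-SiE-lem}, which records $\Si_E\sub G(2,V)\setminus E$, so the union is genuinely disjoint.

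For the reverse inclusion I would take an arbitrary line $L\sub\DD_E$ and invoke Corollary~\ref{lines-cor}(ii): either $L\sub S_E$, in which case the same corollary forces $L=\P\La_p$ for some $p\in E$, placing $L$ in the copy of $E$ inside $G(2,V)$; or $L\sub\P W_{L_2}$ for some $L_2\in\Pic^2(E)$, in which case Proposition~\ref{Si-E-prop} places the corresponding point of $G(2,V)$ in $\Si_E$. These two alternatives exhaust all lines in $\DD_E$ and land in $E$ and $\Si_E$ respectively, completing the identification.

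There is essentially no computational obstacle here, as the real work sits in the cited results. The only point demanding care is the bookkeeping that matches the two descriptions of lines in $\DD_E$ supplied by Corollary~\ref{lines-cor}(ii) with the two summands $E$ and $\Si_E$, together with confirming these do not overlap. The latter is underwritten by Lemma~\ref{E-SiE-lem}: a line lying in $S_E$ meets each plane $\P W_q$ only along the curve $E\sub\P W_q$ and hence cannot itself be contained in any $\P W_q$, so no line of the form $\P\La_p$ is counted in $\Si_E$.
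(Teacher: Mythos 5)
Your proposal is correct and follows exactly the paper's own argument: the paper's proof is the one-line "Combine Proposition~\ref{Si-E-prop} with Corollary~\ref{lines-cor}\,(ii); the union is disjoint by Lemma~\ref{E-SiE-lem}," and your write-up simply spells out the same combination (with the easy converse inclusion made explicit). No gaps; nothing further is needed.
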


\begin{proof} Combine Proposition \ref{Si-E-prop} with Corollary \ref{lines-cor}\,(ii). The union is disjoint by Lem\-ma~\ref{E-SiE-lem}.
\end{proof}

\begin{Lemma}\label{Dp-subbund-lem}
Let $\La\in G(2,V)\setminus E$.
\begin{itemize}\itemsep=0pt
\item[$(i)$] For any $3$-dimensional subspace $M\sub V$ containing $\La$, one has $W\cap {\bigwedge}^2 M={\bigwedge}^2 \La$.
\item[$(ii)$] Assume that for generic $v\in \La$, the rank of $\Pi_{E,v}$ is $4$.
Then the map $D_{\La}\ot \OO\to V/\La\ot \OO(1)$ over the projective line $\P \La$ is an embedding of a rank $2$ subbundle.
\end{itemize}
\end{Lemma}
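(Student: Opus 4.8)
The plan is to establish (i) by an intersection argument resting on the fact that the elliptic curve $E$ contains no projective line, and then to obtain (ii) as a formal consequence of (i) together with the dimension count already supplied by Lemma~\ref{Dp-lem}.

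For (i), the key observation is that, since $\dim M=3$, every vector of $\bigwedge^2 M$ is decomposable, so $\P(\bigwedge^2 M)=G(2,M)$ is the sub-Grassmannian of $2$-planes contained in $M$. Hence $\P(W\cap \bigwedge^2 M)=\P W\cap G(2,M)\sub \P W\cap G(2,V)=E$. Thus the linear subspace $\P(W\cap \bigwedge^2 M)$ lies on the smooth genus~$1$ curve $E$, which contains no line; therefore it is at most a point and $\dim(W\cap \bigwedge^2 M)\le 1$. Any nonzero element is then a decomposable $\om_\Pi$ attached to a single point $\Pi\in E$ with $\Pi\sub M$, and since $\Pi$ and $\La$ are $2$-planes in the $3$-space $M$ they meet in a line $\lan v\ran$ with $v\in \La$; equivalently the ruling line $\P\Pi\sub S_E$ meets $\P\La$ at $[v]$. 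Working, as in Lemma~\ref{Dp-lem}, modulo $\bigwedge^2\La$, the equality $W\cap \bigwedge^2 M=\bigwedge^2\La$ is exactly the assertion that no such extra $\Pi$ occurs, i.e. that $\P\La$ is disjoint from the scroll $S_E$ inside $\P M$; this is the geometric content to be verified.

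For (ii), Lemma~\ref{Dp-lem}(ii) applied to one nonzero $v$ of rank $4$ shows $\dim D_\La=2$, so $D_\La\ot \OO$ is a rank $2$ bundle and it remains to prove that the fiber $\pi_v|_{D_\La}$ of the map to $V/\La\ot \OO(1)$ is injective for every nonzero $v\in \La$. If injectivity failed at some $v$, a nonzero $A\in D_\La$ would satisfy $A(v)=0$, so $A$ has rank $1$ with kernel $\lan v\ran$; under the identification $T_\La G(2,V)\ot \det(\La)\simeq \La\ot V/\La$ this $A$ corresponds to a decomposable element $v\ot \bar u$ of $D_\La\ot \det(\La)=W\cap(\La\we V)$, hence to a decomposable $\om_\Pi=v\we u'\in W$ with $\Pi=\lan v,u'\ran\in E$ and $\Pi\ne \La$. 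Taking $M=\La+\Pi$, which is $3$-dimensional because $\dim(\La\cap\Pi)=1$, we would obtain $\om_\Pi\in (W\cap \bigwedge^2 M)\setminus \bigwedge^2\La$, contradicting (i). Thus $\pi_v|_{D_\La}$ is injective at every point and the map is an embedding of a rank $2$ subbundle.

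The step I expect to be the crux is the passage in (i) from the bound $\dim(W\cap \bigwedge^2 M)\le 1$ to the exact value $\bigwedge^2\La$: concretely, ruling out a second point of $E$ lying in a $3$-plane through $\La$, equivalently showing that $\P\La$ meets no ruling line of the elliptic scroll $S_E$. Once (i) is in hand, (ii) is purely the bookkeeping above, which converts a rank drop of the bundle map into a forbidden decomposable vector of $W$ via the identifications already set up in Lemma~\ref{Dp-lem}.
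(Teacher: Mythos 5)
Your derivation of (ii) from (i) is correct, and it is actually cleaner than the paper's: the paper forms the exact sequence $0\to D_\La\ot\OO(-1)\to V/\La\ot\OO\to Q\to 0$ over $\P\La$ and excludes torsion in the cokernel $Q$ by a case analysis ($Q\simeq\OO_x\oplus\OO(1)$ and $Q\simeq T\oplus\OO$), in each case manufacturing a $3$-dimensional $M\supset\La$ with $D_\La\cap\bigl(\La\ot M/\La\bigr)\neq 0$; your argument reaches the same contradiction in one step, by observing that a fiberwise kernel vector is a rank-one element of $D_\La$, which under the identification of Lemma~\ref{Dp-lem}(i) lifts to a decomposable vector of $W$, i.e., a point $\Pi\in E$ whose plane meets $\La$. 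Your first half of (i) (the bound $\dim\bigl(W\cap\bigwedge^2 M\bigr)\le 1$ from the fact that $E$ contains no line) is identical to the paper's.

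The gap is the one you flag yourself: you never prove the crux, namely that $\P\La$ meets no ruling line of $S_E$, so your proof of (i) is incomplete. But you should know that the paper's own proof has exactly the same defect: after the dimension bound it simply asserts ``so $Q=\bigwedge^2\La$,'' which is a non sequitur --- indeed the asserted equality cannot hold literally, since $\La\notin E$ means $\bigwedge^2\La\not\sub W$, so no subspace of $W$ equals $\bigwedge^2\La$. Worse, the missing statement is not a consequence of the stated hypotheses. Take $[v_0]\in\P\La_p\sub S_E$ for some $p\in E$ and let $\P\La$ be a generic line through $[v_0]$: then $\La\notin E$ (the ruling lines are disjoint, so $\P\La_p$ is the only ruling line through $[v_0]$) and $\P\La\not\sub\DD_E$, so the rank of $\Pi_{E,v}$ is $4$ for generic $v\in\La$; yet for $M=\La+\La_p$ the vector spanning $\bigwedge^2\La_p$ lies in $W\cap\bigwedge^2 M$ and not in $\bigwedge^2\La$, so (i) fails, and correspondingly $D_\La$ contains a rank-one map annihilating $v_0$, so the bundle map of (ii) drops rank at $[v_0]$ and is not a subbundle embedding. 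Thus the lemma is only correct under the supplementary hypothesis $\P\La\cap S_E=\varnothing$ (equivalently, $W\cap(\La\we V)$ contains no nonzero decomposable vectors) --- precisely the condition you isolated; with it added, your argument and the paper's both go through. In the application to Theorem~\ref{TheoremB}, where $\La=\La_p$ for $p\in E$ and the lemma is applied to $E'$, this extra condition holds for generic $p\in E$, though strictly speaking that too requires a verification (that $S_E\cap S_{E'}$ does not meet every ruling line of $S_E$) which the paper does not supply.
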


\begin{proof}
(i) Since all elements of ${\bigwedge}^2 M$ are decomposable, the intersection $Q:=W\cap {\bigwedge}^2 M$ is a~linear subspace consisting of
decomposable elements. But all decomposable elements of $W$ are of the form ${\bigwedge}^2 \La_p$ for some point $p\in E$. Hence, we would get
an embedding $\P(Q)\to E$, which implies that $Q$ is $1$-dimensional, so $Q={\bigwedge}^2 \La$.

(ii) From part (i) and from Lemma \ref{Dp-lem} we get that for any $3$-dimensional subspace $M\sub V$ containing $\La$, one has $D_{\La}\cap \La\ot M/\La=0$.
Let us set $P=V/\La$, and let us consider the exact sequence over $\P \La$,
\begin{gather*}0\to D_{\La}\ot \OO(-1)\to P\ot \OO\to Q\to 0.\end{gather*}
We want to prove that the rank $1$ sheaf $Q$ on $\P^1$ has no torsion. Since $\deg(Q)=2$ and~$Q$ is generated by global sections, we only have to exclude the possibilities
$Q\simeq \OO_x\oplus \OO(1)$ and~${Q\simeq T\oplus \OO}$, where $T$ is a torsion sheaf of length $2$.

Assume first that $Q\simeq \OO_x\oplus \OO(1)$. Consider the composed surjection $f\colon P\ot \OO\to Q\to \OO(1)$. It is induced by a surjection $P\to H^0(\OO(1))$,
which has $1$-dimensional kernel $\lan v\ran$. It follows that the inclusion of $D_{\La}\ot \OO(-1)$ into $P\ot \OO$ factors as
\begin{gather*}D_{\La}\ot \OO(-1)\to \lan v\ran\ot \OO\oplus \OO(-1)\to P\ot \OO.\end{gather*}
Hence, $D_{\La}$ has a nontrivial intersection with $H^0(\OO(1))\ot \lan v\ran=\La\ot M/\La\sub \La\ot V/\La$, for some $3$-dimensional $M\sub V$, containing $\La$.
This is a contradiction, as we proved that there could be no such $M$.

In the case $Q\simeq T\oplus \OO$, we get that $D_{\La}\ot \OO(-1)$ is contained in the kernel of a surjection~${P\ot \OO\to \OO}$, i.e., $D_{\La}\ot \OO(-1)$ is contained in
$\OO^2\sub P\ot \OO$. But any embedding $\OO(-1)^2\to \OO^2$ factors through some $\OO(-1)\oplus \OO\to \OO^2$ (occurring as kernel of the surjection $\OO^2\to \OO_x$,
for some point $x$ in the support of the quotient). Hence, we can finish again as in the previous case.
\end{proof}

\begin{Remark} The rational map $f$ from $G(2,V)$ to $\P^4$ has the following interpretation, which can be proved using projective duality.
Start with a generic line $\ell\sub \P(V)$. Then the intersection~${\ell\cap \DD_E}$ with the degeneration quintic of $\Pi_E$ consists of $5$ points.
Taking the images of these points under the projection $\DD_E\setminus S_E\to \wt{E}$ (see Corollary~\ref{lines-cor}) we get a divisor $D_{\ell}$ of degree~$5$ on~$\wt{E}$.
All these divisors will belong to a certain linear system $\P^4$ of degree $5$, and the map~${\ell\mapsto D_{\ell}}$ is exactly our map $f$.
\end{Remark}

\subsection[Calculation of the Schouten bracket and proof of Theorem B]{Calculation of the Schouten bracket and proof of Theorem \ref{TheoremB}}

\begin{Lemma}\label{conormal-Lie-lemma}\quad
\begin{itemize}\itemsep=0pt
\item[$(i)$] Let $E\sub G(2,V)$ be the elliptic curve defined by $W\sub {\bigwedge}^2 V$. Then for each point $p\in E$, the bivector $\Pi_E$ vanishes on the projective line
$\P \La_p\sub \P V$, where $\La_p\sub V$ is the $2$-dimensional subspace corresponding to $p$. For a generic point $v$ of $\La_p$, the Lie algebra $\fg=T^*_v \P V$
has a~basis $(h_1,h_2,e_1,e_2)$ such that
\begin{gather*}[h_1,h_2]=[e_1,e_2]=0,\\
[h_i,e_i]=2e_i, \qquad [h_j,e_i]=-e_i \qquad \text{for}\quad i\neq j.\end{gather*}
Equivalently,
the linearization of $\Pi_E$ takes form
\begin{gather*}\Pi^{\rm lin}_E=2e_1\pa_{h_1}\we\pa_{e_1}-e_1\pa_{h_2}\we\pa_{e_1}+2e_2\pa_{h_2}\we\pa_{e_2}-e_2\pa_{h_1}\we\pa_{e_2}.\end{gather*}
Furthermore, the conormal subspace $N_{\P \La_p, v}^\vee \sub \fg^*$ is spanned by $e_1$, $e_2$, $h_1+h_2$ {\rm(}dually the tangent space to $T_{\P \La_p}$ is spanned by
$\pa_{h_1}-\pa_{h_2})$.

\item[$(ii)$] We have an identification
\begin{gather*}H^0(\P \La_p, N_{\P \La_p})\simeq H^0(\P \La_p, V/\La_p\ot \OO(1))\simeq \La_p^*\ot V/\La_p\simeq T_p G(2,V).\end{gather*}
Under this identification, the line $T_p E\sub T_p G(2,V)$ has the property that
the corresponding global section of $N_{\P \La_p}$ evaluated at generic $v\in \P \La_p$ spans the line
\begin{gather*}\lan \pa_{h_1},\pa_{h_2}\ran/\lan \pa_{h_1}-\pa_{h_2}\ran\sub N_{\P \La_p, v}\simeq V/\La_p.\end{gather*}
Equivalently, the tangent space at $v$ to the surface $S_E\sub \P V$ is $\lan \pa_{h_1},\pa_{h_2}\ran\sub T_v \P V$.

\item[$(iii)$] Let $\Pi'$ be a Poisson bracket compatible with $\Pi_E$. Then for $p\in E$ and a generic $v\in \La_p$, one has
\begin{equation}
\label{comp-bracket-form}
\Pi'_v\in \lan (2\pa_{h_1}-\pa_{h_2})\we \pa_{e_1}, (2\pa_{h_2}-\pa_{h_1})\we \pa_{e_2}, \pa_{h_1}\we \pa_{h_2}\ran.
\end{equation}
\end{itemize}
\end{Lemma}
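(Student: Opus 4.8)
The plan is to analyze the three parts in sequence, using Theorem~\ref{conormal-thm} as the main engine.

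For part (i), I would start from the computation in Lemma~\ref{Pi-rank-lem}(i): for $p\in E$ and generic $v\in\La_p$, the extension $\wt{\VV}$ corresponding to $v$ splits as $\OO(p)\oplus \VV'$, where $\VV'$ is semistable of rank $2$ and degree $4$ with $\dim\End(\VV')=2$. By Theorem~\ref{conormal-thm}, the conormal Lie algebra $\fg=T_v^*\P V$ is isomorphic to $\End(\wt{\VV})/\lan\id\ran$, which (since $\Pi_E$ vanishes at $v$, so $\fg=T_v^*\P V$ is $4$-dimensional) I would identify explicitly. The key is to choose $\VV'$ generic, namely $\VV'\simeq L_1\oplus L_2$ with $L_1,L_2$ nonisomorphic of degree $2$; then $\wt{\VV}\simeq \OO(p)\oplus L_1\oplus L_2$ and $\End(\wt{\VV})$ is the $3$-dimensional diagonal torus plus the off-diagonal $\Hom$ spaces. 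The relevant $\Hom$ spaces are $\Hom(\OO(p),L_i)$, which are nonzero (degree $1$ into degree $2$), giving the two ``raising'' generators $e_1,e_2$, while the $h_i$ come from the traceless diagonal part. Computing brackets in $\End$ modulo $\lan\id\ran$ should reproduce the displayed $\ssl$-type relations $[h_i,e_i]=2e_i$, $[h_j,e_i]=-e_i$. The linearization formula for $\Pi_E^{\rm lin}$ is then just the dual statement: the structure constants of $\fg$ are the linear coefficients of the Poisson bivector at $v$. For the conormal subspace of $\P\La_p$, I would use that $\P\La_p$ is the line of extensions splitting over $\OO(p)$, so its conormal directions correspond to the endomorphisms preserving this splitting, which are spanned by $e_1,e_2$ and the diagonal direction $h_1+h_2$ that scales $L_1\oplus L_2$ against $\OO(p)$.

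For part (ii), I would use the identification of $H^0(\P\La_p,N_{\P\La_p})$ with $\La_p^*\ot V/\La_p\simeq T_pG(2,V)$ coming from the standard description of the normal bundle of a line in $\P V$ and the tangent space to the Grassmannian. The tangent line $T_pE\sub T_pG(2,V)$ is cut out by the condition $D_{\La}$-type tangency to $W$, and I would trace through which global section of $N_{\P\La_p}$ it gives. Since $S_E$ is the union of the lines $\P\La_p$ and $T_pE$ moves the plane $\La_p$ inside $G(2,V)$, the corresponding normal section should land, at generic $v$, in the plane $\lan\pa_{h_1},\pa_{h_2}\ran\bmod\lan\pa_{h_1}-\pa_{h_2}\ran$, matching the tangent directions to the splitting bundles $L_1,L_2$ against $\OO(p)$. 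The claim about $T_vS_E=\lan\pa_{h_1},\pa_{h_2}\ran$ follows because $S_E$ is swept by the lines $\P\La_p$ (giving $\pa_{h_1}-\pa_{h_2}$ along the line) together with the motion of $p$ (giving the normal section just identified).

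For part (iii), the strategy is: compatibility $[\Pi_E,\Pi']=0$ forces strong constraints on $\Pi'_v$ at a point where $\Pi_E$ vanishes but its linearization $\Pi_E^{\rm lin}$ is nondegenerate in the transverse directions. At such a point, the Schouten bracket $[\Pi_E,\Pi']=0$, expanded to lowest order, relates $\Pi'_v$ to the Lie algebra $\fg$ from part (i): concretely, the value $\Pi'_v\in\bigwedge^2 T_v\P V$ must be compatible with the Lie coalgebra structure, i.e.\ it must be a $2$-cocycle (or coboundary) for $\fg$ acting via $\Pi_E^{\rm lin}$. I would compute the space of bivectors $\xi\in\bigwedge^2\fg^*{}^\vee=\bigwedge^2 T_v\P V$ annihilated by the relevant linearized operator and verify it is exactly the $3$-dimensional span displayed in \eqref{comp-bracket-form}. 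Explicitly, writing $\Pi'_v=\sum c_{ab}\pa_a\we\pa_b$ in the basis $\pa_{h_1},\pa_{h_2},\pa_{e_1},\pa_{e_2}$, the leading-order vanishing of the Schouten bracket with $\Pi_E^{\rm lin}$ gives linear equations on the $c_{ab}$ whose solution space is spanned by $(2\pa_{h_1}-\pa_{h_2})\we\pa_{e_1}$, $(2\pa_{h_2}-\pa_{h_1})\we\pa_{e_2}$, and $\pa_{h_1}\we\pa_{h_2}$. The main obstacle I anticipate is part (iii): carefully extracting the correct lowest-order term of the Schouten bracket at a vanishing point of $\Pi_E$ (where $\Pi_E$ itself is zero but $d\Pi_E\ne 0$), and organizing the resulting linear algebra so that the solution space comes out exactly three-dimensional with the stated generators, rather than accidentally larger or smaller. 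The $\ssl$-type structure from part (i) is what makes this computation tractable, since it lets me diagonalize the action and solve the cocycle equations weight-by-weight.
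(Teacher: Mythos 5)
Your parts (i) and (iii) follow essentially the paper's route, and (iii) is organized correctly: in (i) you reduce, via Lemma~\ref{Pi-rank-lem}, to the generic splitting $\wt{\VV}\simeq \OO(p)\oplus L_1\oplus L_2$ and apply Theorem~\ref{conormal-thm} with the basis $h_i=\id_{L_i}-\id_{\OO(p)}$, $e_i\in\Hom(\OO(p),L_i)$, exactly as the paper does; in (iii) your key observation is right: since $\Pi_E$ vanishes at $v$, the value $[\Pi_E,\Pi']|_v$ depends only on $\Pi'_v$ and on $\Pi_E^{\rm lin}$, so the constraint is that $\Pi'_v\in{\bigwedge}^2\fg^*$ is a $2$-cocycle of $\fg$ with trivial coefficients. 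For the given $\fg$ the cocycle equations read $c(e_1,e_2)=0$, $c(h_1,e_1)=-2c(h_2,e_1)$, $c(h_2,e_2)=-2c(h_1,e_2)$, whose solution space is precisely the $3$-dimensional span in \eqref{comp-bracket-form}; the paper dismisses this as ``a straightforward computation,'' and your cocycle formulation is a clean way to carry it out. One imprecision in (i): your criterion for the conormal subspace (``endomorphisms preserving the splitting'') is not the right one, since \emph{all} diagonal endomorphisms preserve the splitting; what singles out $e_1$, $e_2$, $h_1+h_2$ is that their images under $A\mapsto p\circ A\circ i$ are sections vanishing at $p$, i.e., lie in $H^0(\VV(-p))=\La_p^\perp$. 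This is fixable, but it matters below.

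The genuine gap is part (ii). The entire content of (ii) is to determine \emph{which} line in $N_{\P\La_p,v}\simeq V/\La_p$ is spanned by the normal section corresponding to $T_pE$, and your proposal asserts the answer (``should land \dots matching the tangent directions to the splitting bundles'') instead of deriving it; nothing in your sketch would detect if the answer were, say, $\lan\pa_{h_1},\pa_{e_1}\ran/\lan\pa_{h_1}-\pa_{h_2}\ran$ instead. The paper's derivation has real content: one first identifies the image of $T_pE$ under $\pi_v$ with the functional on the conormal space $\La_p^\perp=H^0(\VV(-p))$ given by evaluating a section at $p$ (as a section of $\VV(-p)$) and composing with $v\colon\VV|_p\to\k$; by part (i), $\La_p^\perp$ is the image of $\lan h_1+h_2,e_1,e_2\ran$ under the map of Theorem~\ref{conormal-thm}, so the claim amounts to showing this evaluation functional kills the images of $e_1$ and $e_2$. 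That step is proved by a diagram chase using that $\wt{\VV}$ is the pull-back of the extension $0\to\OO\to\OO(p)\to\OO_p\to 0$ along $v$, which forces the composition $L_1\oplus L_2\to\VV\to\OO_p$ (the last arrow being $v$) to vanish; hence the sections coming from $e_i$, namely the compositions $\OO\to L_i(-p)\to\VV(-p)$, are annihilated by evaluation-then-$v$. Without this verification, (ii) is unproven -- and (ii) is precisely what the proof of Theorem~\ref{TheoremB} uses to convert \eqref{comp-bracket-form} into the inclusion $T_pE\sub D_{E',p}$, so it cannot be waved through.
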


\begin{proof}
(i) Extensions $\wt{\VV}$ of $\VV$ by $\OO$, corresponding to the line $\P \La_p$, are exactly nontrivial extensions that split under $\OO\to \OO(p)$.
We claim that for a generic point of $\P \La_p$ we have \smash{$\wt{\VV}\simeq \OO(p)\oplus L_1\oplus L_2$}, where $L_1$ and $L_2$ are nonisomorphic line bundles
of degree $2$. Indeed, by Lemma \ref{Pi-rank-lem}\,(ii), the only other possibility is $\wt{\VV}\simeq \OO(p)\oplus \VV'$, where $\VV'$ is a nontrivial extension of
$M$ by $M$, where $M^2\simeq \det(\VV)(-p)$. Since the corresponding extension splits over the unique embedding $M\to \VV$, this gives one point on the line $\P \La_p$
for each of the four possible line bundles~$M$.

We can compute the Lie algebra $\fg$ for the point corresponding to $\wt{\VV}\simeq \OO(p)\oplus L_1\oplus L_2$ using the isomorphism of
Theorem \ref{conormal-thm},
\begin{equation}\label{End-to-conormal-map}
\End\big(\wt{\VV}\big)/\lan \id\ran\rTo{\sim} \fg\sub H^0(\VV).
\end{equation}
We consider
the following basis in $\End\big(\wt{\VV}\big)/\lan \id\ran$:
\begin{gather*}h_i=\id_{L_i}-\id_{\OO(p)}, \qquad e_i \in \Hom(\OO(p),L_i), \quad i=1,2.\end{gather*}
Then it is easy to check the claimed commutator relations between these elements.

The conormal subspace to $\P \La_p$ is identified with $\La_p^\perp=H^0(\VV(-p))$. The image of the subspace~${\Hom(\OO(p),L_1\oplus L_2)}$ under the map
\eqref{End-to-conormal-map} will consist of compositions
\begin{gather*}\OO\to \OO(p)\to L_1\oplus L_2\to \VV,\end{gather*}
which vanish at $p$, so they are contained in $H^0(\VV(-p))$.
We have
\begin{gather*}h_1+h_2=\id_{L_1}\oplus \id_{L_2}-2\id_{\OO(p)}\equiv -3\id_{\OO(p)} \mod \lan\id_{\wt{\VV}}\ran,\end{gather*}
and the element $\id_{\OO(p)}$ is mapped under \eqref{End-to-conormal-map} to the composition
\begin{gather*}\OO\to \OO(p)\to \VV,\end{gather*}
which also vanishes at $p$.
This proves our claim about the conormal subspace.

(ii) To identify the direction corresponding to $T_pE$, we first recall that the map $E\to G(2,V)$ is associated with
the subbundle $\VV^\vee\hra V\ot \OO$ over $E$. We have an exact sequence
\begin{gather*}0\to T_p^*E\ot \VV|_p\to H^0(\VV|_{2p})\to \VV|_p\to 0.\end{gather*}
The dual of the natural map $V^*\to H^0(\VV|_{2p})$ fits into a morphism of exact sequences
\[
\begin{diagram}
0&\rTo{}&\VV^\vee|_p&\rTo{}&H^0(\VV|_{2p})^*&\rTo{}& T_pE\ot \VV^{\vee}|_p&\rTo{}&0\\
&&\dTo{\sim}&&\dTo{}&&\dTo{\b}\\
0&\rTo{}&\La_p&\rTo{}&V&\rTo{}& V/\La_p&\rTo{}&0
\end{diagram}
\]
and the map $\b$ corresponds to a map $T_pE\to \Hom(\VV^\vee|_p,V/\La_p)=\Hom(\La_p,V/\La_p)$
which is the tangent map to $E\to G(2,V)$.
Note that the dual to $\b$ is the natural linear map
\begin{equation}\label{TpE-linear-map-eq}
(V/\La_p)^*=\ker\big(H^0(\VV)\to \VV|_p\big)\to \ker\big(H^0(\VV|_{2p})\to \VV|_p\big)\simeq T_p^*E\ot \VV|_p.
\end{equation}

Now, given a functional $v\colon \VV|_p\to k$, the image of $T_pE$ under $\pi_v\colon\La_p^*\ot V/\La_p\to V/\La_p$ corresponds
to the composition of \eqref{TpE-linear-map-eq} with $v$. In other words, it is given by the composition
\begin{gather*}\La_p^\perp=H^0(\VV(-p))\to \VV(-p)|_v\simeq \VV|_p\rTo{v} k\end{gather*}
(here we use a trivialization of $T_pE$).

Let $\wt{\VV}\to \VV$ be the extension corresponding to $v$. As we have seen in (i), for a generic~$v$, we have $\wt{V}\simeq \OO(p)\oplus L_1\oplus L_2$,
where $L_i$ are as above. As we have seen in (i), under the isomorphism~\eqref{End-to-conormal-map},
$\La_p^\perp=H^0(\VV(-p))$ is the image of the subspace $\lan h_1+h_2,e_1,e_2\ran$.

Hence, it remains to check that the composition
\begin{gather*}\lan e_1,e_2\ran\to H^0(\VV(-p))\to \VV(-p)|_p\simeq \VV|_p\rTo{v} k,\end{gather*}
is zero (where the first arrow is induced by \eqref{End-to-conormal-map}).
Let us consider the element $e_1$ (the case of~$e_2$ is similar). It maps to the element of $H^0(\VV(-p))$ given by
the embedding
\begin{gather*}\OO\to L_1(-p)\to \VV(-p),\end{gather*}
where we use the composed map $L_1\to \wt{\VV}\to \VV$.
Thus, it is enough to check that the composition \smash{$L_1\to \VV\rTo{v} \OO_p$} is zero.
To this end we use the fact that the extension $\wt{\VV}$ is the pull-back of the standard extension $\OO(p)\to \OO_p$ via $v$. Hence, we have
a commutative diagram with exact rows and columns,
\[
\begin{diagram}
0&\rTo{}&\OO&\rTo{}&\OO(p)&\rTo{}&\OO_p&\rTo{}&0\\
&&\uTo{\id}&&\uTo{}&&\uTo{v}\\
0&\rTo{}&\OO&\rTo{}&\wt{\VV}&\rTo{}&\VV&\rTo{}&0\\
&&&&\uTo{}&&\uTo{}\\
&&&&L_1\oplus L_2&\rTo{\id}& L_1\oplus L_2
\end{diagram}
\]
which shows that the composition $L_1\oplus L_2\to \VV\to \OO_p$ is zero.

(iii)
This is obtained by a straightforward computation using the vanishing of $[\Pi_E,\Pi']$ and the formula for $\Pi_E^{\rm lin}$ from part (i).
\end{proof}

\begin{Lemma}\label{comp-Si-lem}
Let $E,E'\sub G(2,V)$ be a pair of elliptic curves obtained as linear sections,
such that $[\Pi_E,\Pi_{E'}]=0$. Then
$E$ is not contained in $\Si_{E'}\sub G(2,V)$.
\end{Lemma}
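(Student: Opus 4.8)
The plan is to argue by contradiction: assume $E\sub \Si_{E'}$ and extract a structural restriction that is incompatible with $E$ being a smooth linear-section elliptic curve. By Proposition \ref{Si-E-prop}, $\Si_{E'}=\bigcup_{q\in\wt{E'}}G(2,W'_q)$, so the hypothesis $E\sub\Si_{E'}$ means that every point $\La_p$ of $E$ (for $p\in E$) lies in some $G(2,W'_q)$, i.e.\ the line $\P\La_p\sub\P V$ is contained in one of the planes $\P W'_q$ associated with $E'$. First I would promote this pointwise statement to a global one: since $E$ is irreducible and the planes $\P W'_q$ are parametrized by the curve $\wt{E'}=\Pic^2(E')$, I expect the assignment $p\mapsto q(p)$ to be a regular (or at worst rational) map $E\to\wt{E'}$, so that the surface $S_E=\bigcup_{p}\P\La_p$ lives inside the union of planes $\bigcup_{q\in\im}\P W'_q$, i.e.\ inside the degeneration quintic $\DD_{E'}$.

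Next I would use the compatibility hypothesis $[\Pi_E,\Pi_{E'}]=0$ together with the conormal Lie-algebra computation of Lemma \ref{conormal-Lie-lemma}. The key is part (iii): at a generic $v\in\La_p$, the compatible bracket $\Pi'=\Pi_{E'}$ is forced to lie in the three-dimensional space \eqref{comp-bracket-form}, spanned by $(2\pa_{h_1}-\pa_{h_2})\we\pa_{e_1}$, $(2\pa_{h_2}-\pa_{h_1})\we\pa_{e_2}$, and $\pa_{h_1}\we\pa_{h_2}$. On the other hand, the assumption $\P\La_p\sub\P W'_q$ says that $\P\La_p$ is a line inside a symplectic-leaf-closure of $\Pi_{E'}$, so $\Pi_{E'}$ is tangent to $\P W'_q$ along $\P\La_p$; by Lemma \ref{Pi-rank-lem}\,(iii) the plane $\P W'_q$ is a Poisson subvariety on which $\Pi_{E'}$ has rank $\le 2$, so $\Pi_{E'}$ vanishes along $\P\La_p\cap E'$ and its nonzero values along $\P\La_p$ must be tangent to $\P W'_q$. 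I would combine these two descriptions of $\Pi_{E'}$ at $v$: the tangent plane to $\P W'_q$ at $v$ must contain the support of $\Pi_{E',v}$, while \eqref{comp-bracket-form} pins that support to lie in specific coordinate directions coming from the splitting $\wt{\VV}\simeq\OO(p)\oplus L_1\oplus L_2$ adapted to $E$. The resulting compatibility forces the tangent plane $T_v\P W'_q$ to be related to the distinguished plane $\lan\pa_{h_1},\pa_{h_2}\ran=T_vS_E$ of Lemma \ref{conormal-Lie-lemma}\,(ii).

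The contradiction I expect to reach is that these constraints force $\Pi_{E',v}$ to vanish for all $p\in E$ and generic $v\in\La_p$, hence to vanish on all of $S_E$, which (since $S_E$ is a surface sweeping out a two-parameter family of directions) would make $\Pi_{E'}$ degenerate on a set too large to be consistent with $\Pi_{E'}$ being generically nondegenerate of type $q_{5,2}$; alternatively, the forced coincidence $T_v S_E=T_v\P W'_{q}$ for varying $v$ would make $S_E$ coincide with a single plane $\P W'_q$, contradicting that $S_E$ is the (nondegenerate, nonplanar) image of the ruled surface $\P(\VV^\vee)$ from Lemma \ref{S-lem}. I would carry this out by a dimension count on how $v$, and hence the adapted decomposition, varies as $p$ ranges over $E$, checking that the three-dimensional space \eqref{comp-bracket-form} cannot simultaneously satisfy the tangency-to-$\P W'_q$ condition for a genuinely moving family of planes.

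The main obstacle will be the second step: reconciling the intrinsic description of $\Pi_{E',v}$ coming from the $E'$-side (tangent to $\P W'_{q(p)}$, which is adapted to a splitting $L'_2\hra\VV'$ for $E'$) with the description \eqref{comp-bracket-form} coming from the $E$-side (adapted to $\OO(p)\oplus L_1\oplus L_2$ for $E$), since the two natural coordinate frames at the common point $v$ are a priori unrelated. Making this comparison precise — i.e.\ showing the $E'$-tangency and the $E$-compatibility cannot both hold along a moving line — is where the real work lies, and I would isolate it as the technical heart of the argument before assembling the contradiction.
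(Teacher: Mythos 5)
Your overall strategy is the paper's: assume $E\sub\Si_{E'}$, use Proposition \ref{Si-E-prop} to place each line $\P\La_p$ inside a plane $\P W'_q$, play the leaf structure of $\Pi_{E'}$ against the constraint \eqref{comp-bracket-form}, and conclude that $S_E$ would have to be planar. But the step you explicitly defer --- reconciling the $E'$-adapted description of $\Pi_{E',v}$ with the $E$-adapted frame of Lemma \ref{conormal-Lie-lemma} --- is precisely the content of the paper's proof, and it has a short resolution you did not supply: the bridge between the two frames is the tangent line to $\P\La_p$ itself. Since $\P\La_p\sub\P W'_q$ and $\P W'_q\setminus E'$ is a symplectic leaf (Lemma \ref{Pi-rank-lem}\,(iii)), at a generic $v\in\P\La_p$ the bracket $\Pi_{E',v}$ has rank exactly $2$ and its support is exactly $T_v\P W'_q$, which contains $T_v\P\La_p=\lan\pa_{h_1}-\pa_{h_2}\ran$; hence $\Pi_{E',v}=(\pa_{h_1}-\pa_{h_2})\we w$ for some tangent vector $w$. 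Comparing such a decomposable bivector with a general element of the span \eqref{comp-bracket-form} is elementary linear algebra: matching the $\pa_{e_1}$- and $\pa_{e_2}$-components forces the coefficients of $(2\pa_{h_1}-\pa_{h_2})\we\pa_{e_1}$ and $(2\pa_{h_2}-\pa_{h_1})\we\pa_{e_2}$ to vanish, so $w\in\lan\pa_{h_1},\pa_{h_2}\ran$, i.e., $\Pi_{E',v}$ is proportional to $\pa_{h_1}\we\pa_{h_2}$ and $T_v\P W'_q=\lan\pa_{h_1},\pa_{h_2}\ran=T_vS_E$ by Lemma \ref{conormal-Lie-lemma}\,(ii). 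Note in particular that the first of your two envisioned contradictions (forcing $\Pi_{E',v}=0$ along $S_E$) does not occur --- $\Pi_{E',v}$ remains of rank $2$ --- it is your second alternative that materializes: the regular map $S_E\setminus S_{E'}\to\DD_{E'}\setminus S_{E'}\to\Pic^2(E')$ of Corollary \ref{lines-cor}\,(i) has vanishing differential everywhere, so $S_E$ lies in a single plane, contradicting the fact that $S_E$ spans $\P^4$.

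Two smaller points. First, you need not worry about whether $p\mapsto q(p)$ is a priori regular: the tangency argument is pointwise at generic $v$, and the globalization is carried by the already-regular map of Corollary \ref{lines-cor}\,(i), so your first step can be skipped. Second, that map is only defined away from $S_{E'}$, so the case $S_E\sub S_{E'}$ must be handled separately: it forces $E=E'$ (as $E$ is recovered from $S_E$ as the family of lines it contains), and then $E\sub\Si_E$ contradicts Lemma \ref{E-SiE-lem}; your write-up omits this case, and without it the planarity argument has nothing to apply to.
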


\begin{proof}
Assume $E\sub \Si_{E'}$. Then, by the description of $\Si_{E'}$ in Proposition \ref{Si-E-prop}, for every $p\in E$ there exists a line bundle $L_2$ of degree $2$ on $E'$ such that
the image of $H^0(\VV|_p)^*\to H^0(E,\VV)^*=V$ is contained in $H^0(E',L_2)^\perp\sub H^0(E',\VV')^*=V$.
In other words, each line $\P \La_p\sub \P V$, for $p\in E$, is contained in
the projective plane $\P H^0(E',L_2)^\perp \sub \P V$.
This plane intersects the zero locus of $\Pi_{E'}$ in a smooth cubic (see Lemma \ref{Pi-rank-lem}\,(iii)), hence, for a generic point $v\in \La_p$, the rank of~$\Pi_{E'}|_v$ is $2$.

Hence, $\Pi_{E'}|_v=w_1\we w_2$, where $\lan w_1,w_2\ran$ is the tangent plane to the leaf of $\Pi_{E'}$ (i.e., to the projective plane $\P H^0(E',L_2)^\perp$).
Furthermore, the plane $\lan w_1,w_2\ran$ contains the tangent line to~$\P \La_p$ at $v$. In the notation of Lemma \ref{conormal-Lie-lemma}\,(i),
the latter tangent line is spanned by $\pa_{h_1}-\pa_{h_2}$.
So, $\Pi_{E'}|_v=(\pa_{h_1}-\pa_{h_2})\we w$ for some tangent vector $w$. But we also know by Lemma \ref{conormal-Lie-lemma}\,(iii) that~$\Pi_{E'}|_v$ is a linear combination of
$(2\pa_{h_1}-\pa_{h_2})\we \pa_{e_1}$, $(2\pa_{h_2}-\pa_{h_1})\we \pa_{e_2}$ and $\pa_{h_1}\we \pa_{h_2}$.
This is possible only when $w\in \lan \pa_{h_1}, \pa_{h_2}\ran$, which is the tangent plane to the surface $S_E$ (see Lemma~\ref{conormal-Lie-lemma}\,(ii)).

This implies that $S_E$ is tangent to the corresponding projective plane $\P H^0(E',L_2)^\perp\sub \DD_{E'}$.
Assume first that $S_E\not\sub S_{E'}$. Then we get that the regular morphism
\begin{gather*}S_E\setminus S_{E'}\to \DD_{E'}\setminus S_{E'}\to \Pic^2(E')\end{gather*}
(see Corollary \ref{lines-cor}) has zero tangent map at every point. Hence, $S_E$ is contained in a projective plane, which is a contradiction
(since the map $\P\big(\VV^\vee\big)\to \P H^0(\VV)^*=\P V$ induces an isomorphism on sections of $\OO(1)$).

Finally, if $S_E\sub S_{E'}$ then $E=E'\sub G(2,V)$ and, we get a contradiction by Lemma \ref{E-SiE-lem}.
\end{proof}

\begin{proof}[Proof of Theorem \ref{TheoremB}]
(i) We can assume that $E\neq E'$. We will check that for a generic point~${p\in E}$, one has
\begin{equation}\label{T-D-inclusion}
T_pE \sub D_{E',p}\sub T_p G(2,V).
\end{equation}

By Lemma \ref{comp-Si-lem},
for a generic $p\in E$, we have $p\not\in \Si_{E'}$, hence, by Corollary \ref{lines-Si-cor}, the line~$\P \La_p$ is not contained in the degeneracy locus $\DD_{E'}$ of $\Pi_{E'}$.
Let us pick a generic point $v$ of $\La_p$, so that the rank of $\Pi_{E',v}$ is $4$.
We want to study the normal projection
\begin{gather*}\Pi_{E',v}^{\rm norm}\in \wedge^2(T_v\P V/T_v \P \La_p)\simeq \wedge^2(V/\La_p)\end{gather*}
(see Lemma \ref{Dp-lem}).

Recall that in the notation of Lemma \ref{conormal-Lie-lemma}, the tangent space to $\P \La_p$ at $v$ is spanned by~${\pa_{h_1}-\pa_{h_2}}$.
Hence, the inclusion \eqref{comp-bracket-form} implies that $\Pi_{E',v}^{\rm norm}$
is proportional to a bivector of the form $\pa_{h_1}\we \xi$ .
By Lemma \ref{conormal-Lie-lemma}\,(ii), we can reformulate this as
\begin{gather*}\Pi_{E',v}^{\rm norm}\in \pi_v(T_pE)\we V/\La_p\sub \wedge^2(V/\La_p).\end{gather*}

By Lemma \ref{Dp-lem}\,(ii), the subspace $\pi_v(D_{E',p})\sub V/\La_p$ consists of $x$ such that $x\we \Pi_{E',v}^{\rm norm}=0$.
Thus, we deduce the inclusion
\begin{gather*}\pi_v(T_pE)\sub \pi_v(D_{E',p})\sub V/\La_p\end{gather*}
for generic $v\in \La_p$.

In other words, the section $s$ generating
\begin{gather*}T_pE\sub T_{\La_p}G(2,V)\simeq\Hom(\La_p,V/\La_p)\simeq H^0(\P \La_p,V/\La_p\ot \OO(1))\end{gather*}
has the property that for generic point $v\in \P \La_p$ the evaluation $s(v)$ belongs to the image of the evaluation at $v$ of
the embedding $D_{E',p}\ot \OO\to V/\La_p\ot \OO(1)$. Since by Lemma \ref{Dp-subbund-lem} the latter is an embedding of a subbundle,
this implies that in fact $s\in D_{E',p}$ as claimed.

This proves the inclusion \eqref{T-D-inclusion} for a generic $p\in E$. But this implies that the composed map
\begin{gather*}E\setminus E'\to G(2,V)\setminus E'\to \P^4\end{gather*}
has zero derivative everywhere, so it is constant. Hence, $E$ is contained in a linear section of~${\P U\cap G(2,V)}$, for some $6$-dimensional subspace
$U\sub {\bigwedge}^2V$ containing $W'$. Hence, ${\dim(W+W')\le 6}$.

Conversely, assume $W$ and $W'$ are such that $U=W+W'$ is $6$-dimensional. Then we claim that $[\Pi_W,\Pi_{W'}]=0$.
Indeed, since the space of such pairs $(W, W')$ is irreducible, it is enough to consider the case when the surface $S=\P U\cap G(2,V)$ is smooth.
Then $E_W$ and $E_{W'}$ are anticanonical divisors on $S$, and we can apply \cite[Theorem~4.4]{HP-bih} to the bundle $\VV_S:=\UU^\vee|_S$ on $S$.
The fact that $(\OO_S,\VV_S)$ is an exceptional pair is easily checked using Koszul resolutions, as in Section~\ref{Koszul-sec}.

(ii) It is well known that if a collection of $k$-dimensional subspaces in a vector space has the property that any two subspaces intersect in a $(k-1)$-dimensional space,
then either all of them are contained in a fixed $(k+1)$-dimensional subspace, or they contain a fixed $(k-1)$-dimensional subspace.
The statement immediately follows from (i) using this fact for $k=5$ and the collection~$(W_i)$.
\end{proof}

\begin{proof}[Proof of Corollary~\ref{CorollaryC}]
By Theorem \ref{TheoremB}\,(ii), the brackets $(\Pi_{W_i})$ are pairwise compatible when either there exists a $6$-dimensional subspace $U\sub {\bigwedge}^2 V$, containing all $W_i$,
or there is a $4$-dimensional subspace $K\sub {\bigwedge}^2 V$, contained in all $W_i$. In the former case the corresponding tensors ${\bigwedge}^5 W_i$ are all
contained in the $6$-dimensional subspace
\begin{gather*}{\bigwedge}^5 U\sub {\bigwedge}^5 \left({\bigwedge}^2 V\right).\end{gather*}
In the latter case all the tensors ${\bigwedge}^5 W_i$ are contained in the $6$-dimensional subspace
\begin{gather*}{\bigwedge}^4 K\ot \left({\bigwedge}^2V/K\right)\simeq \left({\bigwedge}^4 K\right)\we \left({\bigwedge}^2 V\right)\sub{\bigwedge}^5 \left({\bigwedge}^2 V\right).\end{gather*}

Conversely, by \cite[Theorem~4.4]{HP-bih}, if we take a smooth linear section $S=\P U\cap G(2,V)$, where~${\dim U=6}$, we claim that we
will get a $6$-dimensional subspace of compatible Poisson brackets coming from anticanonical divisors of $S$.
We just need to show that the corresponding linear map from $H^0\big(S,\om_S^{-1}\big)$ to the space of Poisson bivectors on $\P(V)$ is injective.
Suppose there exists an anticanonical divisor $E_0\sub E$ such that the corresponding Poisson bivector is zero. Pick a generic anticanonical divisor $E$.
Then all elliptic curves in the pencil $E+tE_0$ map to the same Poisson bivector. But this is impossible since we can recover $E\sub G(2,V)$ from
the corresponding Poisson bracket $\Pi_E$ on $\P(V)$, as the set of all lines lying in the zero locus $S_E$ (see Section~\ref{Rank-strat-sec}).
\end{proof}

\subsection*{Acknowledgments} We are grateful to Volodya Rubtsov for useful discussions and to the anonymous referee for helpful comments.
N.M.~would like to thank the Max Planck Institute for Mathematics for
hospitality and perfect work conditions.

\pdfbookmark[1]{References}{ref}
\LastPageEnding


\begin{thebibliography}{99}
\footnotesize\itemsep=0pt

\bibitem{FO95}
Feigin B.L., Odesskii A.V., Vector bundles on an elliptic curve and {S}klyanin
  algebras, in Topics in {Q}uantum {G}roups and {F}inite-{T}ype {I}nvariants,
  \textit{Amer. Math. Soc. Transl. Ser.~2}, Vol. 185, \href{https://doi.org/10.1090/trans2/185/04}{American Mathematical
  Society}, Providence, RI, 1998, 65--84, \href{https://arxiv.org/abs/q-alg/9509021}{arXiv:q-alg/9509021}.

\bibitem{Gor}
Gorodetsky L., Markarian N., On conormal {L}ie algebras of {F}eigin--{O}desskii
  {P}oisson structures, \href{https://arxiv.org/abs/2403.02805}{arXiv:2403.02805}.

\bibitem{HP-bih}
Hua Z., Polishchuk A., Elliptic bihamiltonian structures from relative shifted
  {P}oisson structures, \href{https://doi.org/10.1112/topo.12315}{\textit{J.~Topol.}} \textbf{16} (2023), 1389--1422,
  \href{https://arxiv.org/abs/2007.12351}{arXiv:2007.12351}.

\bibitem{Kap}
Kapranov M.M., On the derived category of coherent sheaves on {G}rassmann
  manifolds,  \href{https://doi.org/10.1070/IM1985v024n01ABEH001221}{\textit{Math. USSR Izv.}} \textbf{24} (1985), 183--192.

\bibitem{Keller}
Keller B., Introduction to {$A$}-infinity algebras and modules,
  \href{https://doi.org/10.4310/hha.2001.v3.n1.a1}{\textit{Homology Homotopy Appl.}} \textbf{3} (2001), 1--35,
  \href{https://arxiv.org/abs/math.RA/9910179}{arXiv:math.RA/9910179}.

\bibitem{Mac}
Macdonald I.G., Symmetric functions and {H}all polynomials, 2nd ed., \textit{Oxford
  Math. Monogr.}, The Clarendon Press, Oxford University Press, New York, 1995.

\bibitem{MP}
Markarian N., Polishchuk A., Compatible {F}eigin--{O}desskii {P}oisson
  brackets,  \href{https://doi.org/10.1007/s00229-023-01479-2}{\textit{Manuscripta Math.}} \textbf{173} (2024), 907--923,
  \href{https://arxiv.org/abs/2207.07770}{arXiv:2207.07770}.

\bibitem{NP}
Nordstrom V., Polishchuk A., Ten compatible {P}oisson brackets on {$\mathbb
  P^5$}, \href{https://doi.org/10.3842/SIGMA.2023.059}{\textit{SIGMA}} \textbf{19} (2023), 059, 10~pages, \href{https://arxiv.org/abs/2301.13417}{arXiv:2301.13417}.

\bibitem{OW}
Odesskii A., Wolf T., Compatible quadratic {P}oisson brackets related to a
  family of elliptic curves, \href{https://doi.org/10.1016/j.geomphys.2012.10.004}{\textit{J.~Geom. Phys.}} \textbf{63} (2013),
  107--117, \href{https://arxiv.org/abs/1204.1299}{arXiv:1204.1299}.

\bibitem{P98}
Polishchuk A., Poisson structures and birational morphisms associated with
  bundles on elliptic curves, \href{https://doi.org/10.1155/S1073792898000415}{\textit{Internat. Math. Res. Notices}}
  \textbf{1998} (1998), 683--703.

\bibitem{Rubtsov}
Rubtsov V., Quadro-cubic {C}remona transformations and
  {F}eigin--{O}desskii--{S}klyanin algebras with 5 generators, in Recent
  {D}evelopments in {I}ntegrable {S}ystems and {R}elated {T}opics of
  {M}athematical {P}hysics, \textit{Springer Proc. Math. Stat.}, Vol. 273,
   \href{https://doi.org/10.1007/978-3-030-04807-5_6}{Springer}, Cham, 2018, 75--106.

\end{thebibliography}
\end{document}